\let\newpf\proof \let\proof\relax
\newcommand{\bt}{\begin{thm}}
\newcommand{\et}{\end{thm}}
\newcommand{\bl}{\begin{lemma}}
\newcommand{\el}{\end{lemma}}
\newcommand{\beq}{\begin{eqnarray}}
\newcommand{\eeq}{\end{eqnarray}}
\def\be{\begin{equation}}
\def\ee{\end{equation}}
\def\ba{{\begin{align}}}
\def\ea{{\end{align}}}
\newtheorem{thm}{Theorem}[section]
\newtheorem{cor}{Corollary}[section]
\newtheorem{lemma}{Lemma}[section]
\newtheorem{sublemma}{}[lemma]
\newtheorem{prop}[thm]{Proposition}
\theoremstyle{remark}
\newtheorem{rem}{Remark}[section]
\numberwithin{equation}{section}
\def \bn {\hfill \\ \smallskip\noindent}
\theoremstyle{definition}
\def\proof{\bn {\bf Proof.} }
\newcommand{\C}{{\mathbb C}}
\newcommand{\N}{{\mathbb N}}
\newcommand{\Q}{{\mathbb Q}}
\newcommand{\R}{{\mathbb R}}
\newcommand{\T}{{\mathbb T}}
\newcommand{\Z}{{\mathbb Z}}
\begin{document}

\title{Quantum dynamical bounds for ergodic potentials with underlying dynamics of zero topological entropy}
\title[]{Quantum dynamical bounds for ergodic potentials with underlying dynamics of zero topological entropy}
\setcounter{tocdepth}{1}

\author{Rui Han and Svetlana Jitomirskaya}
\maketitle

\begin{abstract}
In this paper we obtain upper quantum dynamical bounds as a corollary of positive Lyapunov exponent for Schr\"odinger operators $H_{f,\theta} u(n)=u(n+1)+u(n-1)+ \phi(f^n\theta)u(n)$, where  $\phi : \mathcal{M}\to \R$ is a piecewise H\"older function on a compact Riemannian manifold $\mathcal{M}$, and $f:\mathcal{M}\to\mathcal{M}$ is a uniquely ergodic volume preserving map with zero topological entropy. As corollaries we obtain localization-type statements for shifts and skew-shifts on higher dimensional tori with arithmetic conditions on the parameters. These are the first localization-type results with precise arithmetic conditions for multi-frequency quasiperiodic and skew-shift potentials.
\end{abstract}

\section{introduction}
Positive Lyapunov exponents are generally viewed as a signature of localization. While it is known that they can coexist even with almost ballistic transport \cite{last} \cite{delrio}, vanishing of certain dynamical exponents has been identified as a reasonable expected consequence of hyperbolicity of the corresponding transfer-matrix cocycle. Results in this direction were obtained in \cite{DT1} \cite{DT2} for one-frequency trigonometric polynomials, and recently in \cite{JM1}, 
for one-frequency quasiperiodic potentials under very mild assumptions
on regularity of the sampling function. In this paper we identify a
general property responsible for positive Lyapunov exponents implying
vanishing of the dynamical quantitites in the rather general case of
underlying dynamics defined by volume preserving maps of Riemannian
manifolds with zero topological entropy, and under very minimal
regularity assumptions. This work presents the first localization-type
results that hold in such generality. We expect that positive
topological entropy should also lead to vanishing of the dynamical
quantities for a.e. (but not every!) phase, but this should be approached by completely different methods and will be explored in a future work. 

Our general results allow us, in particular, to obtain localization-type statements for potentials defined by shifts and skew-shifts of higher-dimensional tori. Pure point spectrum with exponentially decaying eigenfunctions has been obtained for a.e. multi-frequency shifts  in the regime of positive Lyapunov exponents in \cite{bg} and for the skew-shift on $\T^2$ with a perturbative condition in \cite{bgs}, both very delicate results. While bounds on transport exponents are certainly weaker than dynamical localization that often (albeit not always \cite{jss}) accompanies pure point spectrum \cite{bj}, we note that pure point spectrum can be destroyed by generic rank one perturbations \cite{dms} while vanishing of the transport exponents is robust in this respect. Finally, our results are the first ones for both of these families that hold under purely arithmetic conditions and the first non-perturbative ones for the skew-shift.

Let $(\mathcal{M}, g)$ be a $d$-dimensional compact (smooth) Riemannian manifold with a metric $g$. 
Let $\mathrm{Vol}_g$ be its Riemannian volume density (see (\ref{Volg})). 
Let $f$ be a uniquely ergodic volume preserving map on $\mathcal{M}$, which means $\mathrm{Vol}_g$ is its unique invariant probability measure. 
We will study the dynamical properties of the Schr\"odinger operator acting on $l^2(\Z)$:
\begin{align}\label{schrodingeropt}
H_{f,\theta} u(n)=u(n+1)+u(n-1)+ \phi(f^n\theta)u(n).
\end{align} 
where $\theta\in \mathcal{M}$ is the phase.

The time dependent Schr\"odinger equation
\begin{align*}
i \partial_t u=H_{\theta}u,
\end{align*}
leads to a unitary dynamical evolution
\begin{align*}
u(t)=e^{-itH_{\theta}}u(0).
\end{align*}
Under the time evolution, the wavepacket will in general spread out with time.
For operators with absolutely continuous spectrum, scattering theory leads to a good understanding of the quantum dynamics.
In this paper we will study the spreading of the wavepacket under positive Lyapunov exponent assumption, which automatically implies the absence of absolutely continuous spectrum.

Let $e^{-itH_{\theta}}\delta_0$ be the time evolution with the localized initial state $\delta_0$. Let
\begin{align*}
a_{\theta}(n,t)=|\langle e^{-itH_{\theta}}\delta_0, \delta_n \rangle|^2.
\end{align*}
$a_{\theta}(n,t)$ describes the probability of finding the wavepacket at site $n$ at time $t$. 
We denote the $p$-th moment of $a_{\theta}(n,t)$ by
\begin{align*}
\langle |X|_{\theta}^p(t)\rangle=\sum_n (1+|n|)^p a_{\theta}(n,t).
\end{align*}

Dynamical localization is defined as boundedness of $\langle |X|_{\theta}^p(t)\rangle$ in time $t$.
This implies purely point spectrum, therefore for general operators with positive Lyapunov exponent  such a strong control of the wavepacket is not possible.
Thus we need to define proper transport exponents which decribe the rate of the spreading of the wavepacket.
For $p>0$  define the upper and lower transport exponents
\begin{align*}
\beta_{\theta}^+(p)=\limsup_{t\rightarrow\infty}\frac{\ln{\langle |X|_{\theta}^p(t)\rangle}}{p\ln{t}};\ \ \beta_{\theta}^-(p)=\liminf_{t\rightarrow\infty}\frac{\ln{\langle |X|^p_{\theta}(t)\rangle}}{p\ln{t}}.
\end{align*}
Obtaining upper bounds for the two transport exponents above implies a power-law control of the spreading rate of the entire wavepacket.

It is also interesting to consider a portion of the wavepacket. 
For a nonnegative function $A(t)$ of time, let
\begin{align*}
\langle A(t)\rangle_T=\frac{2}{T}\int_0^\infty e^{-2t/T}A(t)\ dt
\end{align*}
be its time average.
Set
\begin{align*}
P_{\theta,T}(L)=\sum_{|n|\leq L}\langle a_{\theta}(n,t)\rangle_T.
\end{align*}
Roughly speaking, $P_{\theta, T}(T^a)>\tau$ means that, in average, over time $T$, a portion of the wavepacket stays inside a box of size $T^a$.
Let us introduce two other scaling exponents:
\begin{align*}
\overline{\xi_\theta}&=\lim_{\tau\rightarrow 0}\limsup_{T\rightarrow \infty}\frac{\ln{\inf\{L|P_{\theta,T}(L)+P_{f\theta,T}(L)>\tau\}}}{\ln{T}}\\
\underline{\xi_\theta}&=\lim_{\tau\rightarrow 0}\liminf_{T\rightarrow \infty}\frac{\ln{\inf\{L|P_{\theta,T}(L)+P_{f\theta, T}(L)>\tau\}}}{\ln{T}}
\end{align*}

The vanishing of $\beta^{\pm}$ and $\overline{\xi}$, $\underline{\xi}$ can be viewed as localization-type statements.
For $\mathcal{M}=\T$ the one-dimensional torus, $f: \theta\rightarrow \theta+\alpha$ the irrational rotation, the Lebesgue measure $m$ is the unique invariant probability measure of $f$. 
It was first proved in \cite{DT1}, \cite{DT2} that in this case for $\phi$ being a trigonometric polynomial, under the assumption of positive Lyapunov exponent, $\beta_{\theta}^{+}(p)=0$ for all $p>0$, all $\theta$ and Diophantine $\alpha$; 
$\beta_{\theta}^{-}=0$ for all $p>0$, all $\theta$ and all $\alpha$. 
It was recently proved in \cite{JM1} that under very mild restrictions on regularity of the potential, 
under the assumption of positivity and continuity of the Lyapunov exponent, 
$\beta_{\theta}^{+}(p)=0$ for all $p>0$, all $\theta$ and Diophantine $\alpha$; $\beta_{\theta}^{-}(p)=0$ for all $p>0$, all $\theta$ and all $\alpha$. 
It was also proved in \cite{JM1} that for piecewise H\"older function, under the assumption of positive Lyapunov exponent, $\overline{\xi}_{\theta}=0$ for a.e.$\theta$ and Diophantine $\alpha$, $\underline{\xi}_{\theta}=0$ for a.e.$\theta$ and all $\alpha$.

\begin{rem}
The two Diophantine sets of $\alpha$ are different between \cite{DT1}, \cite{DT2} and \cite{JM1}. They are both full measure sets, but \cite{JM1} covers a slightly thinner set of frequencies because of the need to handle potentials with weaker regularity.
\end{rem}

In this paper we consider $d$-dimensional compact Riemannian manifold $\mathcal{M}$ and uniquely ergodic volume preserving map $f$. We consider maps with the following volume scaling property. 
For $1\leq l\leq d$, let $\Sigma (l)$ be the set of $C^{\infty}$ mappings $\sigma: Q^l \rightarrow \mathcal{M}$ where $Q^l$ is the $l$-dimensional unit cube. 
Let $\mathrm{Vol}_{g,l}(\sigma)$ be the induced $l$-dimensional volume of the image of $\sigma$ in $\mathcal{M}$ counted with multiplicity, i.e. 
if $\sigma$ is not one-to-one, and the image of one part coincides with that from another part, then we will count the set as many times as it is covered. 
For $n=1,2,...$ and $1\leq l\leq d$, let
\begin{align}\label{scale}
V_{l}(f)=\sup_{\sigma\in \Sigma (l)}\limsup_{n\rightarrow \infty}\frac{1}{n} \log{\mathrm{Vol}_{g, l}(f^n \sigma)}\ \ \ \mathrm{and}\ \ \ V(f)=\max_{l}V_{l}(f).
\end{align}
Volume preserving $f$ always satisfies $V_d(f)=V_d(f^{-1})=0$. 
Here we need to make an extra assumption that $V(f)=V(f^{-1})=0$. 
It is known that for smooth invertible map $f$, $V(f)=V(f^{-1})$ is equal to the {\it topological entropy} of $f$ \cite{Yod}, thus our class of maps includes all smooth maps with zero topological entropy.
In particular, it includes both the irrational rotation and the skew-shift.

For such maps we will assume that $f$ has a bounded discrepancy.

Let $J_N(\theta)=J(\theta, f\theta,..., f^{N-1}\theta)$ (see (\ref{defisodiscre})) be the isotropic discrepancy function of the sequence $\{f^n\theta \}_{n=0}^{N-1}$. 
For $\delta>0$, we will say $f$ has {\it strongly $\delta$-bounded isotropic discrepancy} if $J_N(\theta)\leq |N|^{-\delta}$ uniformly in $\theta $ for $|N|>N_0$; 
$f$ has {\it weakly $\delta$-bounded isotropic discrepancy} if there exists a sequence $\{N_j\}$ such that $J_{N_j}(\theta)\leq |N_j|^{-\delta}$ uniformly in $\theta$.
It turns out many concrete dynamical systems feature these properties. 
We will show in Lemmas \ref{toralsbdd} - \ref{skewwbdd} that the following holds.
\begin{itemize} 
\item Shifts of higher dimensional tori, $f:\theta \rightarrow \theta+\alpha$, has strongly bounded isotropic discrepancy for Diophantine $\alpha$;
\item Skew-shift $f:(y_1, y_2,... ,y_d)\rightarrow (y_1+\alpha, y_2+y_1,... ,y_d+y_{d-1})$, has strongly bounded isotropic discrepancy for Diophantine $\alpha$,  and weakly bounded isotropic discrepancy for Liouvillean $\alpha$.
\end{itemize}


Under the assumption of boundedness of discrepancy and scaling property of  $f$, we are ready to formulate the following two abstract results.

Let $\mu_{\theta}$ be the spectral measure of $H_{\theta}$ corresponding to $\delta_0$. Let $N=\int_{\mathcal{M}} \mu_{\theta}\ \mathrm{d}\mathrm{Vol}_g$ be the integrated density of states. Let $L(E)$ be the Lyapunov exponent, see (\ref{lyap}).

\begin{thm}\label{xi}
Let $\phi$ be a piecewise H\"older function. Suppose $L(E)$ is positive on a Borel subset $U$ with $N(U)>0$. 
Suppose $f$ is a uniquely ergodic volume preserving map satisfying $V(f)=V(f^{-1})=0$. We have
\begin{itemize}
\item If for some $\delta>0$, $f$ has weakly $\delta$-bounded isotropic discrepancy, then $\underline{\xi_{\theta}}=0$ for $\mathrm{Vol}_g$-a.e. $\theta\in \mathcal{M}$;
\item If for some $\delta>0$, $f$ has strongly $\delta$-bounded isotropic discrepancy, then $\overline{\xi_{\theta}}=0$ for $\mathrm{Vol}_g$-a.e. $\theta\in \mathcal{M}$.
\end{itemize}
\end{thm}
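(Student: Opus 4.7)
The plan is to convert the definition of $\overline{\xi_\theta}$ and $\underline{\xi_\theta}$ into a Green's-function decay statement, derive that decay from exponential transfer-matrix growth produced by positive Lyapunov exponent, and then use the bounded-discrepancy and zero-entropy hypotheses to enforce the growth semi-uniformly in the phase. By Parseval for the Laplace transform,
\[
\langle a_\theta(n,t)\rangle_T = \frac{1}{\pi T}\int_\R |\langle (H_\theta-E-i/T)^{-1}\delta_0,\delta_n\rangle|^2\,dE,
\]
so, since $N(U)>0$, it is enough to bound $|G_\theta(0,n;E+i/T)|$ and $|G_{f\theta}(0,n;E+i/T)|$ off-diagonally for $E$ in a large portion of $U$; the pair $\{\theta,f\theta\}$ is precisely what lets one access transfer matrices propagated from both sides of the origin simultaneously.

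Cramer's rule for the Jacobi matrix bounds $|G_\theta(0,n;z)|$ from above by a quotient whose denominator is the modulus of a truncated determinant, in turn controlled below by products of transfer-matrix norms. A standard block-resolvent argument then shows that if at scale $N$ one has $\tfrac{1}{N}\log\|A_N(E,\theta)\|\geq L(E)-\varepsilon$ together with the analogous bound at a suitable backward translate $f^{-N}\theta$, then $|G_\theta(0,n;E+i/T)|$ decays exponentially in $|n|$ up to a fixed fraction of $N$. Choosing $N$ a small power of $\log T$ then forces $L=T^{o(1)}$ in the definition of the scaling exponents.

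The decisive step is therefore to establish this transfer-matrix lower bound simultaneously for a full-$\mathrm{Vol}_g$-measure set of $\theta$ and a set of $E\in U$ of nearly full IDS measure, at all large $N$ (strong discrepancy case) or along a subsequence (weak discrepancy case). I would cover $U$ by a polynomial-in-$N$ number of intervals of length $N^{-C}$ and use the piecewise H\"older regularity of $\phi$ to control the modulus of continuity of $\tfrac{1}{N}\log\|A_N(E,\cdot)\|$ in $E$, reducing to representative energies $E_j$. At each $E_j$, positivity of $L(E_j)$ gives $\int\tfrac{1}{N}\log\|A_N(E_j,\cdot)\|\,d\mathrm{Vol}_g\to L(E_j)$, so the bad set $B_N(E_j,\varepsilon)=\{\theta:\tfrac{1}{N}\log\|A_N(E_j,\theta)\|<L(E_j)-\varepsilon\}$ has vanishing volume; the $\delta$-bounded isotropic discrepancy upgrades this to a polynomial-in-$N$ bound via a quantitative Riemann-sum approximation of the integrand, while $V(f^{\pm 1})=0$ prevents exponential expansion of the backward images needed to transfer the bound from the phase at time $N$ back to the phase at time $0$. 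Borel--Cantelli over $N$, or over $\{N_j\}$, then selects the full-measure set of $\theta$.

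The main obstacle is precisely this semi-uniform upgrade: turning pointwise ($L^1$-ergodic) convergence of $\tfrac{1}{N}\log\|A_N(E,\cdot)\|$ into a quantitative, Borel--Cantelli-summable tail, valid jointly in $\theta$ and $E$. Zero topological entropy kills exponential accumulation of bad orbits under iteration, H\"older regularity makes the discretization in $E$ essentially lossless, and bounded discrepancy provides the polynomial rate; removing any of these ingredients breaks the argument. The split between $\overline{\xi_\theta}=0$ and $\underline{\xi_\theta}=0$ mirrors exactly the split between strong and weak discrepancy, since the former allows Borel--Cantelli over all large $N$ while the latter delivers only a good subsequence.
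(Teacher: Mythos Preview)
Your outline has a genuine gap in the step you yourself flag as decisive. You claim that $\delta$-bounded isotropic discrepancy ``upgrades'' the $L^1$-convergence of $\tfrac{1}{N}\log\|A_N(E,\cdot)\|$ to a polynomial-in-$N$ bound on $\mathrm{Vol}_g(B_N(E,\varepsilon))$ via a ``quantitative Riemann-sum approximation of the integrand.'' But isotropic discrepancy controls $\bigl|\tfrac{1}{N}\sum_{n<N}\chi_C(f^n\theta)-\mathrm{Vol}_g(C)\bigr|$ for convex $C$; it says nothing about the measure of sublevel sets of $\tfrac{1}{N}\log\|A_N\|$, which is not a Birkhoff sum (only subadditive) and has no a priori regularity in $\theta$ beyond what the piecewise H\"older assumption gives at scale $e^{-cn}$. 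Absent a large-deviation estimate---which is exactly what is \emph{not} available in this generality---the only information you have is $\mathrm{Vol}_g(B_N(E,\varepsilon))<1/2$ from Chebyshev plus the uniform upper bound, and that is not summable. So your Borel--Cantelli step cannot be carried out as written.

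The paper proceeds by a different, and in a sense dual, mechanism. It never tries to shrink the bad set; it accepts that the good set $M_{n,z,L(E),3\epsilon}$ merely has measure $>2/5$, but uses the H\"older regularity of $\phi$ together with $V(f)=V(f^{-1})=0$ to show that this good set contains a geodesic ball of radius $e^{-5\epsilon n/\gamma}$. Bounded isotropic discrepancy is then used only for a covering statement: any ball of radius $r$ sweeps out all of $\mathcal{M}$ under $f$ in $r^{-M}$ steps (Lemmas~3.3--3.4). Hence for \emph{every} $\theta$ there is some $|j|\le e^{5M\epsilon n/\gamma}$ with $\|A_n(f^j\theta,z)\|\ge e^{n(L(E)-3\epsilon)}$, which, via $A_{s+t}=A_t(f^s\cdot)A_s$, forces $\|A_\bullet(\theta,z)\|_{\pm L}$ to be large for $L\sim e^{10M\epsilon n/\gamma}$. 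The conclusion for $\xi$ then comes from the Killip--Kiselev--Last inequality (their Lemma~4.3), not from Green's-function/Parseval estimates, and there is no Borel--Cantelli: the a.e.\ set is simply $\{\theta:(\mu_\theta+\mu_{f\theta})(U)>0\}$, which has full measure because $N(U)>0$. The strong/weak discrepancy split corresponds to whether the covering holds for all small $r$ or only along a sequence $r_k\to 0$, matching $\overline{\xi}$ versus $\underline{\xi}$.
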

\begin{rem}
The full measure set of $\theta$ appearing in Theorem $\ref{xi}$ is precisely the set $\{\theta: \mu_\theta+\mu_{f\theta}(U)>0\}$.
\end{rem}

\begin{thm}\label{beta}
Under the assumption of Theorem \ref{xi}, assume also $L(E)$ is continuous in $E$ and $L(E)>0$ for every $E\in \R$. We have
\begin{itemize}
\item If for some $\delta>0$, $f$ has weakly $\delta$-bounded isotropic discrepancy, then $\beta^-_{\theta}(p)=0$ for all $\theta\in \mathcal{M}$ and $p>0$;
\item If for some $\delta>0$, $f$ has strongly $\delta$-bounded isotropic discrepancy, then $\beta^+_{\theta}(p)=0$ for all $\theta\in \mathcal{M}$ and $p>0$.
\end{itemize}
\end{thm}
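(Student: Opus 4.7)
The plan is to implement a Damanik-Tcheremchantsev style decomposition. For each large $T$ and a small parameter $a>0$ to be sent to zero, set $L=T^a$ and split the moment as
$$
\langle|X|_\theta^p(t)\rangle = \sum_{|n|\leq L}(1+|n|)^p a_\theta(n,t) + \sum_{|n|>L}(1+|n|)^p a_\theta(n,t).
$$
Since $\sum_n a_\theta(n,t)\leq 1$, the inner sum is bounded deterministically by $(1+L)^p=O(T^{ap})$. For the outer sum I would time-average first and apply the Parseval-Plancherel bound familiar from \cite{DT1,DT2}: for any $L\geq 1$,
$$
\sum_{|n|>L}\langle a_\theta(n,t)\rangle_T \;\lesssim\; \sum_{|n|>L}\frac{1}{T}\int_{-K}^{K}\frac{dE}{\|M_{|n|}(E+i/T,\theta)\|^2},
$$
where $[-K,K]$ contains $\sigma(H_\theta)$ for all $\theta$. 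The central task then reduces to a uniform-in-$\theta$, uniform-in-$E$ exponential lower bound on the transfer matrices evaluated on the line $\mathrm{Im}\,z = 1/T$.

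The hypothesis $L(E)>0$ on all of $\R$ combined with continuity of $L$ gives $L(E)\geq 2L_\ast$ for some $L_\ast>0$ on the compact spectrum, a bound I would extend to a thin complex strip via continuity of $L$ in the complex variable (for energies away from the spectrum the Combes-Thomas estimate does the work directly). To convert this into the pointwise estimate $\|M_n(E+i/T,\theta)\|\geq e^{nL_\ast}$ valid for every $\theta$, I would use a quantitative substitute for Furman's uniform semicontinuity theorem: in the continuous cocycle case unique ergodicity gives uniform convergence of $n^{-1}\log\|M_n(E,\theta)\|$ to $L(E)$ in $\theta$, but the piecewise H\"older setting forces control of the visits of $\{f^j\theta\}$ to the singular locus of $\phi$. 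This is exactly what bounded isotropic discrepancy supplies: under strongly (resp.\ weakly) $\delta$-bounded discrepancy, the fraction of the orbit lying near the discontinuities up to time $N$ (resp.\ along the sequence $\{N_j\}$) is $O(N^{-\delta})$, which allows one to replace $\phi$ by a continuous approximant with total additive error $o(n)$ in the cocycle Birkhoff sum. This upgrades the subadditive ergodic theorem to uniform convergence in $\theta$ (respectively along $\{N_j\}$), yielding the desired matrix lower bound.

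Substituting, the outer tail contribution is bounded by $T^{p-1}e^{-cT^a}$, super-polynomially small as $T\to\infty$. Combined with the trivial inner estimate, the time-averaged moment satisfies $\langle\langle|X|_\theta^p(t)\rangle\rangle_T\leq C\,T^{ap}+o(1)$ uniformly in $\theta$. The standard identities relating the time-averaged moment back to the $t$-asymptotic growth then give $\beta_\theta^+(p)\leq a$ in the strongly bounded case and $\beta_\theta^-(p)\leq a$ in the weakly bounded case; letting $a\to 0$ finishes. The main obstacle is securing the uniform transfer-matrix lower bound in the discontinuous setting, since Furman's usual argument breaks at the breakpoints of $\phi$; the bounded discrepancy input is precisely what compensates, and tying these two ingredients together quantitatively, with the effective error comparable to $N^{-\delta}$ after dyadic pigeonholing in $E$, is the real content of the argument.
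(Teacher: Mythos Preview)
Your argument has a genuine gap at the central step. You claim that, after replacing $\phi$ by a continuous approximant, unique ergodicity upgrades the subadditive ergodic theorem to \emph{uniform} convergence in $\theta$, yielding $\|A_n(\theta,z)\|\geq e^{nL_\ast}$ for every $\theta$ and all large $n$. This is false: Furman's theorem for continuous subadditive cocycles over uniquely ergodic systems gives only the uniform \emph{upper} bound $\limsup_n \tfrac{1}{n}\log\|A_n(\theta,E)\|\leq L(E)$. The uniform lower bound fails in general even for smooth $\SL(2,\R)$ cocycles with positive Lyapunov exponent; there can be exceptional phases (and exceptional scales) where $\tfrac{1}{n}\log\|A_n(\theta,E)\|$ drops well below $L(E)$. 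So the pointwise estimate you need to feed into the tail bound simply is not available, and the outer sum does not become $O(T^{p-1}e^{-cT^a})$.

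The paper's mechanism is different and avoids this obstruction. It uses the uniform upper bound together with $\int \tfrac{1}{n}\log\|A_n\|\geq L(E)$ to show the set $\{\theta:\|A_n(\theta,z)\|\geq e^{n(L(E)-3\epsilon)}\}$ has measure $>2/5$ and, via the assumption $V(f)=V(f^{-1})=0$ and the piecewise H\"older regularity, contains a geodesic ball of radius $e^{-5\epsilon n/\gamma}$. Bounded isotropic discrepancy is then used \emph{not} to control visits to the discontinuity locus (a tubular neighbourhood of $J_\phi$ is not geodesically convex, so discrepancy says nothing about it directly), but to guarantee that the orbit of any $\theta$ enters this ball within $e^{5M\epsilon n/\gamma}$ steps. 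This yields only a \emph{max-over-a-window} lower bound $\min_{\iota}\max_{|j|\leq e^{5M\epsilon n/\gamma}}\|A_n(f^j\theta,z)\|\geq e^{n(L(E)-3\epsilon)}$, which is exactly what the Damanik--Tcheremchantsev criterion requires. The passage from ``good set of positive measure'' to ``every orbit hits it quickly'' is the real content, and it cannot be replaced by a Furman-type uniform lower bound.
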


\begin{rem} Strongly $\delta$-bounded isotropic discrepancy is essential for vanishing of $\overline{\xi}$ and $\beta^+_{\theta}(p)$, see Remarks \ref{xirem} and \ref{betarem}. 
However, it is not yet clear whether weakly $\delta$-bounded isotropic discrepancy (or any condition at all other than mere positivity of the Lyapunov exponent) is essential for vanishing of the $\underline{\xi}$ or of $\beta^-_{\theta}.$ 
\end{rem}
Theorems \ref{xi}, \ref{beta} extend the results of \cite{DT1,DT2,JM1} from
irrational rotations of the circle to general uniquely ergodic maps of
compact Riemannian manifolds with  zero
topologogical entropy and bounded discrepancy. One key to achieving
such generality is a new argument that does not rely on harmonic
analysis/ approximation by trigonometric polynomials.

By \cite{DT3}, $\beta_{\theta}^{-}(p)\geq p\dim_{H}(\mu_{\theta})$ where $\dim_{H}(\mu)$ is the Hausdorff dimension of $\mu$. Thus as a consequence of $\beta^{-}_{\theta}(p)=0$ we have the following
\begin{cor}
Under the assumption of Theorem \ref{beta}, $\dim_{H}(\mu_{\theta})=0$ for all $\theta\in \mathcal{M}$.
\end{cor}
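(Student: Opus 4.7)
The plan is entirely straightforward: combine Theorem \ref{beta} with the inequality from \cite{DT3} quoted immediately above. First I would invoke Theorem \ref{beta} to deduce that $\beta^-_\theta(p)=0$ for every $\theta\in\mathcal{M}$ and every $p>0$. Under the weakly $\delta$-bounded isotropic discrepancy hypothesis this is exactly the conclusion of Theorem \ref{beta}; under the strongly $\delta$-bounded hypothesis Theorem \ref{beta} gives the stronger statement $\beta^+_\theta(p)=0$, and $\beta^-_\theta(p)\leq \beta^+_\theta(p)$ holds by definition of $\liminf$ versus $\limsup$, so we obtain $\beta^-_\theta(p)=0$ in either case.

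Next I would apply the inequality $\beta^-_\theta(p)\geq p\dim_H(\mu_\theta)$ from \cite{DT3} recalled in the sentence preceding the corollary. Combined with the vanishing from the previous step, this yields $p\dim_H(\mu_\theta)\leq 0$ for every $p>0$. Since Hausdorff dimension is nonnegative and $p>0$ is arbitrary, we conclude $\dim_H(\mu_\theta)=0$ for every $\theta\in \mathcal{M}$.

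There is no real obstacle here, since the entire argument is a one-line deduction pairing Theorem \ref{beta} with the black-box lower bound from \cite{DT3}. The only conceptual point worth flagging is that vanishing of $\beta^-_\theta(p)$ (rather than of $\beta^+_\theta(p)$) is what feeds the Hausdorff dimension bound, so the weaker of the two hypotheses of Theorem \ref{beta} already suffices and the corollary therefore holds in full generality under either alternative of that theorem.
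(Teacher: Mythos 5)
Your proposal is correct and is exactly the paper's (implicit) argument: the sentence preceding the corollary already spells out that one combines $\beta^-_\theta(p)=0$ from Theorem \ref{beta} with the bound $\beta^-_\theta(p)\geq p\dim_H(\mu_\theta)$ from \cite{DT3}. Your observation that the strong-discrepancy alternative also yields $\beta^-_\theta(p)=0$ via $\beta^-\leq\beta^+$ (or equivalently, since strong boundedness implies weak boundedness) is a correct and sensible clarification, though the paper leaves it implicit.
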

\begin{rem} The point here is that we obtain zero Hausdorff dimension of the spectral measure for {\it all} rather than a.e. $\theta\in \mathcal{M}$ (the latter is known for general ergodic potentials \cite{simon}). The statement for all $\theta$ has only been known for irrational rotations of $\T^1$ (proved for trigonometric polynomials in \cite{JL}, and follows easily for piecewise functions from the results of \cite{JM1}).

The following Theorems \ref{xitoral} - \ref{betaskew} are all corollaries of our abstract results. 
Theorems \ref{xi2d} and \ref{beta2d} depend on a somewhat different
technique (bypassing the discrepancy considerations), which allows us
to cover more frequencies in case of the shift of $\T^2.$
To our knowledge, Theorems \ref{xitoral} -\ref{beta2d} are the first arithmetic localization-type results.
\end{rem}

Let us introduce the Diophantine condition and weak Diophantine condition on $\T^d$:
\begin{align*}
DC(\tau)=\cup_{c>0}DC(c,\tau)=\cup_{c>0}\{(\alpha_1,..., \alpha_d)| \|\langle \vec{h}, \alpha \rangle\|_{\R/{\Z}} \geq \frac{c}{r(\vec{h})^{\tau}} \ \mathrm{for}\ \mathrm{any}\ \vec{h}\neq \vec{0}\}
\end{align*}
where $r(\vec{h})=\prod_{i=1}^d \max{(|h_i|,1)}$. 
It is well known that when $\tau>1$, $DC(\tau)$ is a full measure set.
\begin{align*}
WDC(\tau)=\cup_{c>0}WDC(c, \tau)=\cup_{c>0}\{(\alpha_1,...,\alpha_d)| \max\{\|h\alpha_i\|_{\R/\Z}\}\geq \frac{c}{|h|^{\tau}}\ \mathrm{for}\ \mathrm{any}\ h\neq 0\}, h \in \Z.
\end{align*}
It is well known that when $\tau>\frac{1}{d}$, $WDC(\tau)$ is a full measure set.

Theorem \ref{xi} reduces vanishing of (upper or lower) $\xi_\theta$ to bounds on the
isotropic discrepancy. As corollaries, we obtain

\begin{thm}\label{xitoral}
Let $f$ be an irrational shift on $\T^d$. For piecewise H\"older $\phi$, suppose $L(E)$ is positive on a Borel subset $U$ with $N(U)>0$. 
Then if $\alpha\in DC(\tau)\subset \T^d$, $\tau>1$, we have $\overline{\xi_{\theta}}=0$ for $a.e.\ \theta \in \T^d$.
\end{thm}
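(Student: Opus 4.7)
The strategy is to verify the three hypotheses of Theorem \ref{xi} for the irrational shift $f:\theta\mapsto \theta+\alpha$ and then invoke the second bullet of that theorem directly. Unique ergodicity and volume preservation are immediate: $f$ preserves Lebesgue measure by translation-invariance, and since $\alpha\in DC(\tau)$ has rationally independent coordinates over $\Q$, $f$ is uniquely ergodic with respect to $\mathrm{Vol}_g$ by Weyl's equidistribution theorem.

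Next I would check $V(f)=V(f^{-1})=0$. Each iterate $f^n$ is a translation of the flat torus $\T^d$, hence an isometry of the flat metric. Consequently, for any $1\leq l\leq d$ and any $\sigma\in\Sigma(l)$, one has $\mathrm{Vol}_{g,l}(f^n\sigma)=\mathrm{Vol}_{g,l}(\sigma)$, so $\tfrac{1}{n}\log\mathrm{Vol}_{g,l}(f^n\sigma)\to 0$. Taking supremum over $\sigma$ and maximum over $l$ gives $V(f)=0$, and the same reasoning handles $f^{-1}$.

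The main (and only substantive) step is to establish that $f$ has strongly $\delta$-bounded isotropic discrepancy for some $\delta=\delta(\tau,d)>0$; this is exactly the content of Lemma \ref{toralsbdd} of the paper. The standard route is via the Erd\H{o}s--Tur\'an--Koksma inequality, which controls $J_N(\theta)$ by exponential sums
\[
\Big|\sum_{n=0}^{N-1} e^{2\pi i\langle \vec h,\,n\alpha+\theta\rangle}\Big| \leq \frac{1}{\|\langle \vec h,\alpha\rangle\|_{\R/\Z}}
\]
over $\vec h\in\Z^d\setminus\{0\}$ with $\max_i|h_i|\leq H$. The Diophantine condition $\alpha\in DC(c,\tau)$ bounds each such sum by $c^{-1}r(\vec h)^\tau$; after summing over $\vec h$ and optimizing $H$ as a polynomial function of $N$, one obtains $J_N(\theta)\leq C N^{-\delta}$ uniformly in $\theta$ for $|N|>N_0$, with an explicit $\delta>0$ depending only on $\tau$ and $d$. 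Since this discrepancy estimate is the only real computation and $DC(\tau)$ is tailored precisely to yield a polynomial saving in the Erd\H{o}s--Tur\'an--Koksma bound, I do not expect any serious obstacle; with all three hypotheses of Theorem \ref{xi} in place, the second bullet of that theorem immediately gives $\overline{\xi_\theta}=0$ for a.e. $\theta\in\T^d$.
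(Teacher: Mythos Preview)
Your proposal is correct and matches the paper's approach exactly: verify the hypotheses of Theorem \ref{xi} for the shift (unique ergodicity, volume preservation, and $V(f)=V(f^{-1})=0$ via the isometry argument) and invoke Lemma \ref{toralsbdd} for the strongly bounded discrepancy. The only small imprecision is that the Erd\H{o}s--Tur\'an--Koksma inequality bounds the discrepancy $D_N$ rather than the isotropic discrepancy $J_N$ directly, so one passes through inequality (\ref{disandisodis}) (equivalently Proposition \ref{isoequ}) to obtain strongly $\delta$-bounded isotropic discrepancy; this costs only a factor of $1/d$ in the exponent and is harmless.
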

\begin{rem}\label{xirem}
The Diophantine condition is essential for the vanishing of $\overline{\xi}$ \cite{JZ}.
\end{rem}

\begin{thm}\label{xiskew}
Let $f$ be a skew-shift. For piecewise H\"older $\phi$, suppose $L(E)$ is positive on a Borel subset $U$ with $N(U)>0$. Then
\begin{itemize}
\item for all irrational $\alpha$, $\underline{\xi_{\vec{y}}}=0$ for a.e. $\vec{y}\in \T^d$,
\item if $\alpha\in DC(\tau)$ for some $\tau>1$, $\overline{\xi_{\vec{y}}}=0$ for a.e. $\vec{y}\in \T^d$.
\end{itemize}
\end{thm}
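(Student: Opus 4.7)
The plan is to deduce Theorem \ref{xiskew} directly from Theorem \ref{xi}, using the two skew-shift discrepancy bounds announced in the introduction (strongly $\delta$-bounded for Diophantine $\alpha$, weakly $\delta$-bounded for Liouvillean $\alpha$). First I verify that the skew-shift $f$ satisfies the remaining abstract hypotheses of Theorem \ref{xi}. It is a smooth self-diffeomorphism of $\T^d$ preserving Haar measure. For irrational $\alpha$ it is uniquely ergodic by Furstenberg's classical criterion for strict ergodicity of distal isometric extensions, applied inductively on the coordinate count (equivalently, by a Weyl equidistribution argument for polynomial phases). Because $f$ is a tower of circle isometries it is distal, so its topological entropy vanishes and hence $V(f) = V(f^{-1}) = 0$.

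With the abstract hypotheses in place, the second bullet is immediate: when $\alpha \in DC(\tau)$ with $\tau > 1$, the strong skew-shift discrepancy bound applies, and the second bullet of Theorem \ref{xi} yields $\overline{\xi_{\vec{y}}} = 0$ for $\mathrm{Vol}_g$-a.e. $\vec{y}\in\T^d$. For the first bullet I split on the arithmetic type of $\alpha$: if $\alpha$ is Diophantine then the strong bound a fortiori gives the weak bound; if $\alpha$ is Liouvillean then the weak skew-shift discrepancy bound supplies a subsequence $N_j$ along which $J_{N_j}(\vec{y}) \le |N_j|^{-\delta}$ uniformly in $\vec{y}$. In either case the first bullet of Theorem \ref{xi} delivers $\underline{\xi_{\vec{y}}} = 0$ for a.e. $\vec{y}$. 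The remark after Theorem \ref{xi} then identifies the full-measure exceptional set as $\{\vec{y}: (\mu_{\vec{y}}+\mu_{f\vec{y}})(U)>0\}$, which has full Haar measure as soon as $N(U)>0$, by integrating $\mu_{\vec{y}}$ against $\mathrm{Vol}_g$ and using the covariance under $f$.

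The main obstacle is not in this reduction, which is essentially matching of hypotheses, but in the two discrepancy lemmas on which the reduction rests. The Diophantine estimate is expected to come from iterated Weyl differencing applied to the polynomial-in-$n$ phases (with $\vec{y}$-dependent lower-order terms) appearing in the coordinates of $f^n\vec{y}$, combined with the Erd\H{o}s--Tur\'an--Koksma inequality to promote exponential-sum cancellation into a bound on the isotropic discrepancy. The Liouvillean case is more delicate: one must locate a subsequence $N_j$ drawn from the continued-fraction denominators of $\alpha$ at which Weyl cancellation in every relevant resonance frequency remains strong enough to survive the isotropic geometry, which requires a careful calibration between the depth of the skew-tower and the arithmetic of $\alpha$.
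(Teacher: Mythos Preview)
Your proposal is correct and matches the paper's approach exactly: the paper's proof of Theorem~\ref{xiskew} is the single line ``Follows from Lemmas~\ref{skewsbdd}, \ref{skewwbdd} and Theorems~\ref{xi}, \ref{beta},'' and you have simply spelled out the verification of the abstract hypotheses (unique ergodicity, volume preservation, $V(f)=V(f^{-1})=0$ via zero topological entropy) and the Diophantine/Liouvillean split that the paper leaves implicit. The only cosmetic difference is that the paper's weak-discrepancy lemma (Lemma~\ref{skewwbdd}) is stated for $\alpha\notin DC(d)$ rather than for Liouvillean $\alpha$, but since any irrational $\alpha$ is either in $DC(d)$ (where Lemma~\ref{skewsbdd} applies with $\tau=d$) or not, both dichotomies cover all irrational $\alpha$.
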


\begin{rem}
The full measure set appearing in Theorems \ref{xitoral} and \ref{xiskew} is precisely the set $\{\theta: \mu_\theta+\mu_{f\theta}(U)>0\}$.
\end{rem}

Similarly, for systems with continuous Lyapunov exponent, Theorem \ref{beta} reduces vanishing of
$\beta^{\pm}_{\theta}(p)$ to the same discrepancy bounds, and we
obtain

\begin{thm}\label{betatoral}
Under the assumption of Theorem \ref{xitoral}, assume in addition that $L(E)$ is continuous in $E$ and $L(E)>0$ for every $E\in \R$. 
Then if $\alpha\in DC(\tau)\subset \T^d$, $\beta^+_{\theta}(p)=0$ for all $\theta \in \T^d$, $p>0$.
\end{thm}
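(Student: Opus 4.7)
The plan is to obtain \thmref{betatoral} as a direct specialization of the abstract \thmref{beta} to the torus shift $f : \theta \mapsto \theta + \alpha$ on $\mathcal{M} = \T^d$. The assumption that $L(E)$ is continuous in $E$ and everywhere positive is precisely the extra hypothesis distinguishing \thmref{beta} from \thmref{xi}, so it transfers unchanged; the remaining task is to check that this $f$ fits the abstract framework, namely (i) it is uniquely ergodic and volume preserving, (ii) $V(f) = V(f^{-1}) = 0$, and (iii) it has strongly $\delta$-bounded isotropic discrepancy for some $\delta > 0$.

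Items (i) and (ii) are essentially classical. Unique ergodicity of an irrational torus translation, with normalized Haar measure as the unique invariant probability measure, is Weyl's theorem, and any $\alpha \in DC(\tau)$ with $\tau > 1$ is in particular irrational. The map is a Euclidean translation, hence an isometry of the flat torus, which makes it volume preserving and of zero topological entropy; the identity $V(f) = h_{\mathrm{top}}(f)$ of \cite{Yod} then yields (ii). One can also bypass \cite{Yod}: for any $\sigma \in \Sigma(l)$, the composition $f^n \sigma$ is a rigid translate of $\sigma$, so $\mathrm{Vol}_{g,l}(f^n\sigma) = \mathrm{Vol}_{g,l}(\sigma)$ is independent of $n$, making the $\limsup$ in (\ref{scale}) zero for every $\sigma$ and hence $V_l(f) = V_l(f^{-1}) = 0$ for each $l$.

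For (iii) I would invoke \lemref{toralsbdd}, the separate lemma in the paper that establishes strongly $\delta$-bounded isotropic discrepancy for Kronecker sequences with Diophantine frequency. The analytical core of that lemma is the Erd\H{o}s--Tur\'an--Koksma inequality, which bounds $J_N(\theta)$ by a weighted sum over small $\vec{h} \neq \vec{0}$ of exponential sums $\bigl|\sum_{n=0}^{N-1} e^{2\pi i \langle \vec{h},\, n\alpha + \theta\rangle}\bigr|$. Each such sum is a geometric series whose denominator is controlled by $\|\langle \vec{h}, \alpha \rangle\|_{\R/\Z}$, and the Diophantine bound $\|\langle \vec{h}, \alpha \rangle\|_{\R/\Z} \ge c / r(\vec{h})^\tau$ converts this into a uniform power-saving estimate $J_N(\theta) \le C N^{-\delta}$ for some $\delta = \delta(\tau,d) > 0$, yielding (iii).

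With (i)--(iii) in hand, \thmref{beta} applies and delivers $\beta^+_\theta(p) = 0$ for every $\theta \in \T^d$ and every $p > 0$, which is the desired conclusion. The genuinely substantive content of this corollary is the discrepancy estimate (iii); once \lemref{toralsbdd} is granted, the rest is bookkeeping. The main obstacle in a standalone proof would therefore be the Diophantine-type exponential-sum bound underlying (iii), but because the shift preserves the natural translation structure of $\T^d$, this bound is entirely classical and accessible via Erd\H{o}s--Tur\'an--Koksma.
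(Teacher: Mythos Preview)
Your proposal is correct and follows essentially the same route as the paper: the paper's proof of \thmref{betatoral} consists of the single sentence ``Follows from Lemma~\ref{toralsbdd} and Theorems~\ref{xi},~\ref{beta},'' and you have spelled out the same reduction, supplying the routine verifications that the torus shift satisfies the abstract hypotheses (unique ergodicity, volume preservation, $V(f)=V(f^{-1})=0$). One minor point: \lemref{toralsbdd} as stated bounds the discrepancy $D_N$ rather than the isotropic discrepancy $J_N$, so strictly speaking one also invokes Proposition~\ref{isoequ} (or inequality~(\ref{disandisodis})) to pass between them; the paper leaves this implicit as well.
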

\begin{cor}
Under the assumption of Theorem \ref{betatoral}, if $\alpha\in DC(\tau)$, $\dim_{H}(\mu_{\theta})=0$ for all $\theta\in \T^d$.
\end{cor}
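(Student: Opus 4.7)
The plan is to deduce this corollary directly from Theorem \ref{betatoral} and the lower bound on transport exponents by the Hausdorff dimension of the spectral measure established in \cite{DT3}. The logical flow mirrors the corollary stated immediately after Theorem \ref{beta}, but now specialized to the arithmetic setting of shifts on $\T^d$ with $\alpha \in DC(\tau)$.

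First, I would invoke Theorem \ref{betatoral}: under the stated hypotheses (piecewise H\"older $\phi$, $L(E)$ continuous and positive for every $E\in \R$, and $\alpha \in DC(\tau)$ with $\tau > 1$), we have $\beta^+_\theta(p)=0$ for every $\theta \in \T^d$ and every $p>0$. Since $\beta^+_\theta(p) \geq \beta^-_\theta(p) \geq 0$ by definition of the $\limsup$ and $\liminf$, this forces $\beta^-_\theta(p)=0$ as well, for all $\theta$ and all $p>0$.

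Next, I would apply the inequality from \cite{DT3}, already quoted in the paper, namely
\begin{equation*}
\beta^-_\theta(p) \;\geq\; p \cdot \dim_H(\mu_\theta).
\end{equation*}
Combined with the previous step, this gives $p \dim_H(\mu_\theta) \leq 0$ for every $p>0$, hence $\dim_H(\mu_\theta)=0$. Since $\theta \in \T^d$ was arbitrary, the conclusion holds for all phases, which is the desired strengthening from the ``almost every $\theta$'' statement (known for general ergodic potentials) to ``every $\theta$''.

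There is no real obstacle here: the corollary is essentially a two-line consequence of Theorem \ref{betatoral} and the Damanik--Tcheremchantsev bound. The substantive content lies entirely in Theorem \ref{betatoral} itself, which packages the positivity and continuity of the Lyapunov exponent together with the strong Diophantine discrepancy bound for the shift (Lemma \ref{toralsbdd}) into a uniform vanishing statement for $\beta^+_\theta$; the corollary merely translates this dynamical statement into a spectral one via \cite{DT3}.
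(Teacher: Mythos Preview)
Your proposal is correct and follows exactly the paper's approach: the paper explicitly records the inequality $\beta_\theta^-(p)\ge p\,\dim_H(\mu_\theta)$ from \cite{DT3} just before the analogous corollary to Theorem~\ref{beta}, and the present corollary is the same two-line deduction specialized to the toral shift setting, using $\beta_\theta^+(p)=0$ from Theorem~\ref{betatoral} (hence $\beta_\theta^-(p)=0$).
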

\begin{rem}\label{betarem}
The Diophantine condition is essential for $\beta^{+}=0$ \cite{JZ}.
\end{rem}

\begin{thm}\label{betaskew}
Under the assumption of Theorem \ref{xiskew}, assume in addition that $L(E)$ is continuous in $E$ and $L(E)>0$ for every $E\in \R$. Then
\begin{itemize}
\item for all irrational $\alpha$, $\beta^-_{\vec{y}}(p)=0$ for all $\vec{y}\in \T^d$, $p>0$,
\item if $\alpha\in DC(\tau)$ for some $\tau>1$, $\beta^+_{\vec{y}}(p)=0$ for all $\vec{y}\in \T^d$, $p>0$.
\end{itemize}
\end{thm}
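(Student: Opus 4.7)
The plan is to deduce Theorem \ref{betaskew} directly from Theorem \ref{beta} applied to the skew-shift $f:(y_1,\ldots,y_d)\mapsto(y_1+\alpha,y_2+y_1,\ldots,y_d+y_{d-1})$ on $\T^d$. The standing assumptions on $\phi$, on the continuity and strict positivity of $L(E)$, and on the spectral subset $U$ are already part of the statement, so what remains is to verify the dynamical hypotheses of Theorem \ref{beta} for $f$: unique ergodicity with respect to the volume, the scaling condition $V(f)=V(f^{-1})=0$, and one of the two discrepancy bounds.

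For any irrational $\alpha$, the skew-shift preserves Haar measure on $\T^d$ and is uniquely ergodic by the classical Furstenberg argument (the skew-shift being a nilrotation on a $d$-step nilmanifold). As already observed in the paragraph following (\ref{scale}), the skew-shift has zero topological entropy, so the Yomdin identification invoked there gives $V(f)=V(f^{-1})=0$. Hence both purely dynamical prerequisites of Theorem \ref{beta} hold for every irrational $\alpha$.

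The arithmetic input is supplied by Lemma \ref{skewwbdd}: for every irrational $\alpha$ the skew-shift has weakly $\delta$-bounded isotropic discrepancy (for some $\delta>0$), and for $\alpha\in DC(\tau)$ with $\tau>1$ it has strongly $\delta$-bounded isotropic discrepancy. Feeding the weak bound into the first bullet of Theorem \ref{beta} yields $\beta^-_{\vec y}(p)=0$ for all $\vec y\in\T^d$ and all $p>0$, which is the first conclusion of Theorem \ref{betaskew}. Feeding the strong bound into the second bullet gives $\beta^+_{\vec y}(p)=0$ for all $\vec y$ and $p>0$, which is the second conclusion.

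The real work in this reduction is hidden in Lemma \ref{skewwbdd} rather than in the present argument. The weakly bounded case for arbitrary irrational $\alpha$ follows from quantitative unique ergodicity along a suitable subsequence $\{N_j\}$. The strongly bounded case under $\alpha\in DC(\tau)$ is the subtler point, since the higher coordinates of $f^n\vec y$ are polynomials in $n$ of degree up to $d$; an Erd\H{o}s--Tur\'an--Koksma inequality reduces $J_N(\vec y)$ to uniform-in-$\vec y$ estimates for the Weyl sums $\sum_{n<N}e^{2\pi i\langle \vec h, f^n\vec y\rangle}$, and extracting a polynomial rate uniformly in $\vec y$ is precisely where the Diophantine hypothesis on $\alpha$ enters. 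Once that lemma is in hand, Theorem \ref{betaskew} is an immediate corollary of Theorem \ref{beta}.
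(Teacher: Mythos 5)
Your overall reduction to Theorem \ref{beta} is exactly what the paper does, but you have garbled the attribution of the discrepancy estimates in a way that leaves a genuine gap in the ``all irrational $\alpha$'' bullet. You cite a single Lemma \ref{skewwbdd} as asserting both that the skew-shift has weakly bounded discrepancy for every irrational $\alpha$ and that it has strongly bounded discrepancy for Diophantine $\alpha$. In fact the paper has two separate lemmas: Lemma \ref{skewsbdd} gives the \emph{strong} bound when $\alpha\in DC(\tau)$, while Lemma \ref{skewwbdd} gives the \emph{weak} bound only when $\alpha\notin DC(d)$. Neither lemma, as stated, gives a weak discrepancy bound for \emph{all} irrational $\alpha$; that conclusion requires a dichotomy argument which your write-up elides.

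The missing step is the following split, which the paper implicitly relies on in its one-line proof: if $\alpha\notin DC(d)$, apply Lemma \ref{skewwbdd} to get weakly $\delta$-bounded discrepancy, and the first bullet of Theorem \ref{beta} gives $\beta^-_{\vec y}(p)=0$; if $\alpha\in DC(d)$, then $\alpha\in DC(\tau)$ with $\tau=d>1$, so Lemma \ref{skewsbdd} gives strongly $\delta$-bounded discrepancy, and the second bullet of Theorem \ref{beta} gives $\beta^+_{\vec y}(p)=0$, hence a fortiori $\beta^-_{\vec y}(p)=0$ (since $\beta^-\leq\beta^+$, or alternatively because a strong discrepancy bound trivially implies a weak one). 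Together these two cases cover all irrational $\alpha$. Once you make this dichotomy explicit and cite the correct lemma in each branch, the proof is a faithful reproduction of the paper's argument; the second bullet (Diophantine $\alpha\Rightarrow\beta^+=0$) is indeed an immediate application of Lemma \ref{skewsbdd} plus the second bullet of Theorem \ref{beta}, as you say.
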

\begin{cor}
Under the assumption of Theorem \ref{betaskew}, for all irrational $\alpha$, $\dim_{H}(\mu_{\vec{y}})=0$ for all $\vec{y}\in \T^d$.
\end{cor}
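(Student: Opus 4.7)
The plan is to apply the lower bound of Damanik--Tcheremchantsev \cite{DT3}, exactly as in the corollary that follows Theorem~\ref{beta}. That general inequality asserts that for any one-dimensional ergodic Schr\"odinger operator one has $\beta_{\theta}^{-}(p)\geq p\,\dim_{H}(\mu_{\theta})$ for every $p>0$, independently of the underlying dynamics. Since the first bullet of Theorem~\ref{betaskew} provides the pointwise, non-arithmetic statement $\beta^-_{\vec{y}}(p)=0$ for every irrational $\alpha$, every $\vec{y}\in\T^d$, and every $p>0$, combining these two facts immediately forces $\dim_H(\mu_{\vec{y}})$ to vanish.

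Concretely, I would fix an irrational $\alpha$ and an arbitrary $\vec{y}\in\T^d$. The first bullet of Theorem~\ref{betaskew} gives $\beta^-_{\vec{y}}(p)=0$ for every $p>0$. Substituting into the Damanik--Tcheremchantsev inequality yields
\[
0 \;=\; \beta^-_{\vec{y}}(p) \;\geq\; p\,\dim_{H}(\mu_{\vec{y}}) \qquad \text{for every } p>0,
\]
so $\dim_{H}(\mu_{\vec{y}})=0$. Since both $\alpha$ (among the irrationals) and $\vec{y}\in\T^d$ were arbitrary, the corollary follows.

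There is no serious obstacle: all of the substantive analysis is already contained in Theorem~\ref{betaskew}, and \cite{DT3} is used as a black box. The only point worth emphasizing is the scope of the quantifiers: because the first bullet of Theorem~\ref{betaskew} holds for \emph{every} irrational $\alpha$ and \emph{every} phase $\vec{y}$, the resulting zero-dimensional conclusion is pointwise rather than a.e., which is precisely what distinguishes it from the standard a.e.\ Hausdorff-dimension statement available for general ergodic potentials (as discussed in the remark following the corollary to Theorem~\ref{beta}).
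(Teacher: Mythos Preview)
Your proposal is correct and matches the paper's approach exactly: the paper derives this corollary (as well as the analogous ones after Theorems~\ref{beta}, \ref{betatoral}, and \ref{beta2d}) directly from the Damanik--Tcheremchantsev inequality $\beta_{\theta}^{-}(p)\geq p\,\dim_{H}(\mu_{\theta})$ of \cite{DT3} combined with the vanishing of $\beta^{-}_{\vec{y}}(p)$ for all irrational $\alpha$ and all $\vec{y}\in\T^d$ from the first bullet of Theorem~\ref{betaskew}.
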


Finally, for the case of the irrational shift $\T^2$ we can make two
more delicate statements, using a different technique to
obtain arithmetic estimates.

\begin{thm}\label{xi2d}
Let $f$ be an irrational shift on $\T^2$. For piecewise H\"older $\phi$, suppose $L(E)$ is positive on a Borel subset $U$ with $N(U)>0$. Then if $\alpha=(\alpha_1,\alpha_2)\in \cup_{\tau>1} WDC(\tau)$, we have
$\underline{\xi_{\theta}}=0$ for $a.e.\ \theta \in \T^2$.
\end{thm}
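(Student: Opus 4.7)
The plan is to reduce Theorem~\ref{xi2d} to the first bullet of Theorem~\ref{xi} by establishing weakly $\delta$-bounded isotropic discrepancy of the shift $f: \theta \mapsto \theta + \alpha$ on $\T^2$ along a suitable subsequence of times. Unlike the DC case treated in Theorem~\ref{xitoral}, Erd\H{o}s-Tur\'an-Koksma is not directly applicable here, because $WDC(\tau)$ controls only $\max_i \|h\alpha_i\|$ rather than the linear combinations $\|\langle \vec h,\alpha\rangle\|$ that appear in that inequality. I would instead construct the good subsequence arithmetically, exploiting the separate one-dimensional structure of each coordinate.

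I would first take continued-fraction denominators $q_n$ of $\alpha_1$ (so that $\|q_n\alpha_1\| \leq 1/q_{n+1}$) and work at scales $N = Mq_n$. Splitting $k = aq_n + b$ with $0 \leq a < M$ and $0 \leq b < q_n$, the 2D orbit decomposes, modulo a first-coordinate error of size at most $M/q_{n+1}$, into $M$ translates of the short orbit $\{b\alpha\}_{b<q_n}$, with second-coordinate translations equal to $a\beta_n \bmod 1$, where $\beta_n := \{q_n\alpha_2\}$. The WDC condition forces $c/q_n^\tau \leq \beta_n \leq 1 - c/q_n^\tau$, so the three-distance theorem shows that choosing $M$ of order $q_n^{\tau+\eta}$ for small $\eta>0$ makes these translates $q_n^{-\eta}$-equidistributed on $\T$. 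Combined with the $O(1/q_n)$-equidistribution of $\{b\alpha_1\}_{b<q_n}$ and a cheap inner/outer approximation of convex subsets of $\T^2$ by axis-aligned rectangles, this yields isotropic discrepancy of order $N^{-\delta}$ for some $\delta = \delta(\tau,\eta) > 0$ along the subsequence $N_j = M_j q_{n_j}$. The first bullet of Theorem~\ref{xi} then delivers $\underline{\xi_\theta} = 0$ for $\mathrm{Vol}_g$-a.e. $\theta \in \T^2$.

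The main obstacle I expect is closing up the error terms with a positive gain $\delta$. The first-coordinate error $M/q_{n+1}$ must be kept $\leq N^{-\delta}$, which forces one to pass to a subsequence of $n$ along which $q_{n+1}$ outpaces a specific power of $q_n$; this is automatic unless $\alpha_1$ has eventually bounded partial quotients, in which case one runs the symmetric argument with $\alpha_1$ and $\alpha_2$ interchanged, using that WDC guarantees at least one coordinate supplies the required continued-fraction gaps. The hypothesis $\tau > 1$ should enter precisely in the balancing of these powers against the loss from converting rectangular to isotropic discrepancy.
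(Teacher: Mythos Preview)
Your proposal routes the result through Theorem~\ref{xi} by attempting to establish weakly $\delta$-bounded discrepancy under $WDC(\tau)$. The paper deliberately does \emph{not} do this: it remarks that Theorems~\ref{xi2d} and~\ref{beta2d} ``depend on a somewhat different technique (bypassing the discrepancy considerations)'', and instead proves Lemma~\ref{2dsteps} directly --- a ball-covering statement, not a discrepancy bound --- via a case analysis splitting on whether $(\alpha_1,\alpha_2)$ satisfies a coprime Diophantine condition $PDC(\tau)$, using bounded remainder sets (Corollary~\ref{2dbound}) in the $PDC$ case and explicit arithmetic in the non-$PDC$ case. Theorem~\ref{xi2d} is then one line: Lemma~\ref{2dsteps} plus Lemma~\ref{stepxi}.

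Your route, as sketched, has two genuine gaps. First, the three-distance theorem does not give the claimed $q_n^{-\eta}$-equidistribution of $\{a\beta_n\}_{a<M}$: WDC only forces $\|q_n\alpha_2\|\ge c/q_n^{\tau}$, but places no constraint on how close $\beta_n=\{q_n\alpha_2\}$ is to a rational with small denominator. If, say, $\beta_n=\tfrac12+\varepsilon$ with $\varepsilon$ tiny, the orbit $\{a\beta_n\}_{a<M}$ clusters near $\{0,\tfrac12\}$ and has discrepancy of order $1$ for all $M\ll \varepsilon^{-1}$, regardless of how $M$ compares to $q_n^{\tau}$. Three-distance gives the gap structure, not a discrepancy bound; the latter requires control on the continued fraction of $\beta_n$, which you do not have. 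Second, your fallback claim that ``WDC guarantees at least one coordinate supplies the required continued-fraction gaps'' is false: take $(\alpha_1,\alpha_2)$ with both coordinates badly approximable (e.g.\ quadratic irrationals). Then $\max_i\|h\alpha_i\|\ge c/|h|$, so $(\alpha_1,\alpha_2)\in WDC(\tau)$ for every $\tau\ge 1$, yet for each coordinate $q_{n+1}\le Cq_n$ for all $n$, and neither provides $q_{n+1}\ge q_n^{\tau+\eta}$ along any subsequence. The paper's case analysis (Section~8) is what actually handles this regime.
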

\begin{rem}
The full measure set appearing in Theorem \ref{xi2d} is precisely the set $\{\theta: \mu_\theta+\mu_{f\theta}(U)>0\}$.
\end{rem}

\begin{thm}\label{beta2d}
Under the assumption of Theorem \ref{xi2d}, assume in addition that $L(E)$ is continuous in $E$ and $L(E)>0$ for every $E\in \R$. Then if $\alpha=(\alpha_1, \alpha_2)\in \cup_{\tau>1}WDC(\tau)$, we have $\beta^{-}_{\theta}(p)=0$ for all $\theta\in \T^2$, $p>0$.
\end{thm}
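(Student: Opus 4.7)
The plan is to combine the strategy behind Theorem~\ref{xi2d} with the technique that upgrades Theorem~\ref{xi} to Theorem~\ref{beta}, adapting the whole argument to a single $\theta$ via continuity of $L(E)$. The starting point is the Damanik--Tcheremchantsev type bound expressing the time-averaged outside probability $\sum_{|n|>L}\langle a_\theta(n,t)\rangle_T$ in terms of inverse transfer matrix norms $\|M_n(E+i/T,\theta)\|^{-2}$ integrated against the density of states. To get $\beta^{-}_\theta(p)=0$ for every $\theta$, the goal is to produce a subsequence $T_k\to \infty$ along which $\sum_{|n|>T_k^{\epsilon}}\langle a_\theta(n,t)\rangle_{T_k}$ decays super-polynomially; combined with the standard Combes--Thomas ballistic tail $\sum_{|n|>CT_k}\langle a_\theta(n,t)\rangle_{T_k}=O(T_k^{-p})$, this controls every moment and yields $\beta^{-}_\theta(p)=0$.

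The assumption that $L(E)$ is positive on all of $\R$ and continuous gives $L(E)\geq \gamma_0>0$ uniformly on any compact energy window $K$. Combined with unique ergodicity of $f$, one obtains a uniform large deviations statement: for every small $\epsilon>0$ there exist $n_0$ and an open-modulo-null set $\mathcal{G}\subset \T^2$ with $\mathrm{Vol}_g(\mathcal{G})\geq 1-\epsilon$ such that $\|M_{n_0}(E,\theta)\|\geq e^{n_0(\gamma_0-\epsilon)}$ for every $\theta\in\mathcal{G}$ and every $E\in K$. The next ingredient is that for every $\theta\in\T^2$ the orbit $\{f^n\theta\}$ visits $\mathcal{G}$ with positive lower density along a suitable subsequence of scales. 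This is where the $WDC$ hypothesis on $\alpha$ replaces the bounded discrepancy argument used in Theorem~\ref{beta}: for $\alpha=(\alpha_1,\alpha_2)\in WDC(\tau)$ there is a coordinate direction whose one-dimensional iterates $\{n\alpha_i\}\subset\T^1$ have polynomially small discrepancy on $\T^1$, and a product/covering argument on $\T^2$ converts this into the required quantitative visits to $\mathcal{G}$, uniformly in $\theta$ (this uniformity, together with the openness of $\mathcal{G}$, is what allows the passage from a.e.~$\theta$ to all $\theta$).

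Feeding these visits into a telescoping/avalanche estimate for the transfer matrices along a subsequence $T_k\to \infty$, chosen so that a positive fraction of the length-$n_0$ blocks in $[0,T_k]$ land in $\mathcal{G}$, one obtains $\|M_{T_k}(E,\theta)\|\geq e^{\gamma T_k}$ for $E$ outside a set $B\subset K$ with $N(B)<2\epsilon$ and some $\gamma=\gamma(\epsilon)>0$. The Damanik--Tcheremchantsev estimate then gives $\sum_{|n|>T_k^{\epsilon'}}\langle a_\theta(n,t)\rangle_{T_k}\leq 3\epsilon$. Iterating the construction with $\epsilon=\epsilon_k\downarrow 0$ along a thinner subsequence of scales, as in \cite{JM1}, and letting the block scale $n_0=n_0(T_k)$ grow slowly with $T_k$, smallness is promoted to super-polynomial decay, so that $T_k^p\cdot \epsilon_k\to 0$ for every $p>0$. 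Combined with the ballistic tail this yields $\langle |X|^p_\theta(T_k)\rangle\leq T_k^{p\epsilon'}$ for arbitrarily small $\epsilon'$, delivering $\beta^{-}_\theta(p)=0$.

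The main obstacle is obtaining super-polynomial (not merely $\epsilon$-small) decay of the outside probability along a subsequence, uniformly in $\theta$, while relying only on $WDC$ rather than the stronger $DC$. WDC supplies only one coordinate of uniform Diophantine control, so one cannot expect such decay at \emph{every} large $T$; however, since $\beta^{-}$ is a $\liminf$ quantity, one is free to pass to a sparse subsequence of $T_k$ aligned with the good one-dimensional direction, which is precisely what makes the argument go through and is also the reason why the corresponding $\beta^{+}$ statement does not appear in Theorem~\ref{beta2d}.
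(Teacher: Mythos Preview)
Your overall architecture matches the paper's: reduce to the Damanik--Tcheremchantsev criterion (Lemma~\ref{Damanik}) and verify its hypothesis by producing, for every $\theta$ and along a subsequence of scales, uniform lower bounds on transfer matrix norms. The paper does this via Lemma~\ref{stepbeta}(1), whose input is a purely geometric statement: along a sequence $r_k\to 0$, every ball of radius $r_k$ in $\T^2$ covers $\T^2$ under the rotation in at most $r_k^{-M}$ steps (Lemma~\ref{2dsteps}). That covering, together with the fact that the set $\{\theta:\|A_n(\theta,z)\|\geq e^{n(L(E)-3\epsilon)}\}$ contains a ball of radius $e^{-5\epsilon n/\gamma}$, is exactly what yields the uniform-in-$\theta$ lower bound; no positive-density-of-visits argument is used.

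The genuine gap in your proposal is this geometric step. Your assertion that ``for $\alpha\in WDC(\tau)$ there is a coordinate direction whose one-dimensional iterates $\{n\alpha_i\}$ have polynomially small discrepancy on $\T^1$'' is not correct: $WDC(\tau)$ only guarantees that for each $h\in\Z\setminus\{0\}$ \emph{at least one} of $\|h\alpha_1\|,\|h\alpha_2\|$ is $\geq c|h|^{-\tau}$, with the winning coordinate allowed to depend on $h$. Neither $\alpha_1$ nor $\alpha_2$ need be individually Diophantine, so no single coordinate has controlled $\T^1$-discrepancy and no ``product/covering'' reduction to $\T^1$ is available. The paper's proof of Lemma~\ref{2dsteps} (Section~8) is accordingly much more delicate: it splits into a coprime-Diophantine case (Case~A), handled via best simultaneous approximations and bounded remainder parallelograms (Corollary~\ref{2dbound}), and a complementary case (Case~B) where some coprime or axis relation $\|h_1\alpha_1+h_2\alpha_2\|$ is abnormally small, which is exploited to approximate the $\alpha$-orbit by motion along a nearly rational line on $\T^2$. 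Without a replacement for this analysis, you do not obtain the uniform-in-$\theta$ visits to $\mathcal{G}$ that the rest of your argument requires.

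Two smaller points. First, the Damanik--Tcheremchantsev integral is taken against Lebesgue measure $dE$ over a compact window containing the spectrum, not against the density of states. Second, the paper does not run an ``avalanche over many good blocks'' estimate: a \emph{single} hit $f^j\theta\in\mathcal{G}$ with $|j|\leq r_k^{-M}$, combined with $A_{s+t}(\theta,z)=A_t(f^s\theta,z)A_s(\theta,z)$, already gives $\max_{|j|\leq r_k^{-M}}\|A_{j}(\theta,z)\|\geq e^{n_k(L(E)-3\epsilon)}$, which is all that Lemma~\ref{Damanik} needs.
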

\begin{cor}
Under the assumption of Theorem \ref{beta2d}, if $\alpha\in \cup_{\tau>1}WDC(\tau)$, we have $\dim_{H}(\mu_{\theta})=0$ for all $\theta\in \T^2$.
\end{cor}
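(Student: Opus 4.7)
The plan is to observe that this corollary is an immediate formal consequence of Theorem~\ref{beta2d} combined with the Damanik--Tcheremchantsev lower bound $\beta^{-}_{\theta}(p) \geq p\dim_{H}(\mu_{\theta})$ proved in \cite{DT3}, which is already quoted in the introduction right after Theorem~\ref{beta}. No new analytic input is needed beyond Theorem~\ref{beta2d}; the corollary is the $\T^2$-weak-Diophantine analogue of the corollaries that follow Theorem~\ref{beta} and Theorem~\ref{betatoral}, and the proof will have the same two-line structure.

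First, I would invoke Theorem~\ref{beta2d} under its hypotheses (piecewise H\"older $\phi$, $L(E)$ continuous in $E$ and strictly positive on all of $\R$, and $\alpha=(\alpha_1,\alpha_2)\in \cup_{\tau>1}WDC(\tau)$) to conclude
\begin{equation*}
\beta^{-}_{\theta}(p)=0 \qquad \text{for every } \theta\in\T^2 \text{ and every } p>0.
\end{equation*}
Then, for arbitrary fixed $\theta\in\T^2$ and $p>0$, the bound from \cite{DT3} gives
\begin{equation*}
p\,\dim_{H}(\mu_{\theta}) \;\leq\; \beta^{-}_{\theta}(p) \;=\; 0,
\end{equation*}
so $\dim_{H}(\mu_{\theta})\leq 0$, and since Hausdorff dimension is nonnegative this forces $\dim_{H}(\mu_{\theta})=0$. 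The fact that this holds simultaneously for \emph{every} $\theta\in\T^2$ (as opposed to almost every $\theta$) is inherited directly from the ``for all $\theta$'' strength of Theorem~\ref{beta2d}.

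There is essentially no obstacle in the deduction itself: the only nontrivial content is Theorem~\ref{beta2d}, whose proof bypasses the discrepancy route of Theorems~\ref{xitoral}--\ref{betaskew} and uses the finer two-dimensional technique alluded to in the introduction. Once that theorem is in hand, this corollary is a one-line combination with \cite{DT3}, and I would present it as such in the final writeup.
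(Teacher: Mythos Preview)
Your proposal is correct and follows exactly the paper's own approach: the paper explicitly records (just after Theorem~\ref{beta}) the bound $\beta_{\theta}^{-}(p)\geq p\dim_{H}(\mu_{\theta})$ from \cite{DT3} and deduces all three Hausdorff-dimension corollaries, including this one, as immediate consequences of the corresponding $\beta^{-}_{\theta}(p)=0$ theorems. No separate proof is given in the paper for this corollary, and none is needed beyond what you wrote.
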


The most technically complex part of the paper consists in obtaining
arithmetic estimates on covering of the torus by the
trajectory of a small ball in a polynomial (in the inverse radius)
time, which we obtain by estimating the discrepancy in
Theorems \ref{xitoral} - \ref{betaskew}, and by the bounded remainder
set technique in Theorems \ref{xi2d}, \ref{beta2d}. The discrepancy estimates are standard for the Diophantine
shifts and are ideologically similar to the known results on
equidistribution of $n^k\alpha$, for the case of higher dimensional
Diophantine skew shifts. We still develop the proof for the
Diophantine skew shift case in full detail because we did not find it in the
literature and also because it serves as a good preparation to the
Liouville higher dimensional skew shift, for which to the best of our knowledge, our
estimates are new. We note that for the {\it Diophantine} skew shift
of $T^2$ and shifts of
$T^d$ the results on the covering of the torus by a trajectory of a
ball are shown in \cite{adz} by a completely different technique,
through solving the cohomological equation. By the nature of the
cohomological equation that technique is not
extendable to the Liouville or weakly Diophantine case.

We organize this paper as follows: in section 2 we introduce some basic definitions. Some of them have been mentioned in the introduction but not in detail. In section 3 we will present some key lemmas and prove Theorems $\ref{xi}$ - $\ref{beta2d}$. In sections 4-8 we prove the key lemmas that are listed in section 3.

\section{Preparation}
\subsection{Riemannian manifolds.}
Let $\mathcal{M}$ be a $d$-dimensional compact Riemannian manifold with a Riemannian metric $g$.

Let $K$ be a compact set in some coordinate patch $(U, x^1,..., x^d)$. 
We define the volume of $K$ to be
\begin{align*}
\mathrm{Vol}_g(K):=\int_{x(K)}\sqrt{|G \circ x^{-1}|} dx^1\cdots dx^d,
\end{align*}
where $G=\det{g_{ij}}$, $g_{ij}=g(\frac{\partial}{\partial x_i}, \frac{\partial}{\partial x_j})$ and $dx^1\cdots dx^d$ is the Lebesgue measure on $\R^d$.
This definition is free of choice of coordinate.
If $K$ is not contained in a single coordinate patch, one could apply partition of unity to define $\mathrm{Vol}_g(K)$.
More precisely, we pick an atlas $(U_{\alpha}, x_\alpha^1,..., x^d_{\alpha})$ of $\mathcal{M}$ and a partition of unity $\{\rho_{\alpha}\}$ subordinate to this atlas. 
Now we can set
\begin{align*}
\mathrm{Vol}_g(K)=\sum_{\alpha}\int_{x^{\alpha}(K \cap U_{\alpha})}(\rho_{\alpha}\sqrt{|G^{\alpha}}|)\circ (x^{\alpha})^{-1} dx_{\alpha}^1\cdots dx_{\alpha}^d.
\end{align*}
The {\it Riemannian volume density} (see e.g.\cite{Nicolaescu}, section 3.4) on $(\mathcal{M}, g)$ is
\begin{align}\label{Volg}
\mathrm{d}\mathrm{Vol}_g=\sum_{\alpha}(\rho_{\alpha}\sqrt{|G^{\alpha}|}) \circ (x^{\alpha})^{-1} dx_{\alpha}^1\cdots dx_{\alpha}^d.
\end{align}
In the above definition, we do not assume $\mathcal{M}$ to be oriented. 
If $\mathcal{M}$ is oriented, then the volume density is actually a positive $n$-form, called the volume form.

If $\varrho: [a,b]\rightarrow \mathcal{M}$ is a continuously differentiable curve in the Riemannian manifold $\mathcal{M}$, then we define its length $l(\varrho)$ by
\begin{align*}
l(\varrho)=\int_{a}^{b}\sqrt{g_{\varrho(t)}(\dot{\varrho} (t), \dot{\varrho} (t))}\ dt,
\end{align*}
where $g_{\varrho(t)}$ is the inner product $g$ at the point $\varrho(t)$.
One could define the distance between any two point $x$, $y\in \mathcal{M}$ as follows
\begin{align*}
&dist(x, y)\\
=&\inf\{l(\varrho): \varrho\ \mathrm{is}\ \mathrm{a}\ \mathrm{continuous},\ \mathrm{piecewise}\ \mathrm{continuously}\ \mathrm{differentiable}\ \mathrm{curve}\ \mathrm{connecting}\ x\ \mathrm{and}\ y\}.
\end{align*}
With the definition of distance, {\it geodesics} in a Riemannian manifold are then the locally distance-minimizing paths.

Let $v\in\mathrm{T}_x\mathcal{M}$ be a tangent vector to the manifold $\mathcal{M}$ at $x$. 
Then there is a unique geodesic $\varrho_v$ satisfying $\varrho_v(0)=x$ with initial tangent vector $\dot{\varrho}_v(0)=v$. 
The corresponding {\it exponential map} is defined by $\exp_x(v)=\varrho_v(1)$.

Let $B_{r}(x)=\{y\in \mathcal{M}: dist(x, y)<r\}$ be a {\it geodesic ball} centered at $x\in \mathcal{M}$ with radius $r$. 
It is known that $B_r(x)=\exp_{x}(B(0, r))$ where $B(0, r)=\{v\in \mathrm{T}_x\mathcal{M}: g_{x}(v, v)<r\}$.

\begin{prop}\label{volumegeoball}
There exists $r_{g}>0$ so that for all $r<r_{g}$, there exist positive constants $C_g$ and $c_g$ which are independent of $x\in \mathcal{M}$ so that
\begin{align}
 c_g r^d\leq \mathrm{Vol}_g(B_r(x)) \leq C_g r^d\ \mathrm{for}\ \mathrm{any}\ x\in \mathcal{M}.
\end{align}
\end{prop}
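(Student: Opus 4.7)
The plan is to exploit the exponential map at each point together with the compactness of $\mathcal{M}$ to compare the Riemannian volume of small geodesic balls with the Euclidean volume of balls in the tangent space, uniformly in the base point $x$.

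First I would recall that on a compact Riemannian manifold the injectivity radius $\mathrm{inj}(\mathcal{M})$ is strictly positive, so the exponential map $\exp_x: B(0,r) \subset \mathrm{T}_x\mathcal{M} \to B_r(x) \subset \mathcal{M}$ is a diffeomorphism for every $r < \mathrm{inj}(\mathcal{M})$ and every $x$. Choose normal coordinates $(v^1,\dots,v^d)$ on $\mathrm{T}_x\mathcal{M}$ with respect to some orthonormal basis of $\mathrm{T}_x\mathcal{M}$. Pulling back the volume density by $\exp_x$, one has
\begin{align*}
\mathrm{Vol}_g(B_r(x)) = \int_{B(0,r)} \sqrt{|G_x(v)|}\; dv^1 \cdots dv^d,
\end{align*}
where $G_x(v)=\det\bigl(g_{ij}(\exp_x v)\bigr)$ is the determinant of the metric in normal coordinates.

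Next I would use the classical fact that in normal coordinates the metric satisfies $g_{ij}(0)=\delta_{ij}$ (so $\sqrt{|G_x(0)|}=1$) and depends smoothly on $(x,v)$. Consequently the function $(x,v)\mapsto \sqrt{|G_x(v)|}$ is continuous on the compact set $\{(x,v): x\in\mathcal{M},\, |v|\le \mathrm{inj}(\mathcal{M})/2\}$ (where we trivialize locally using a finite atlas, matching bases in a continuous way), and equals $1$ on the slice $v=0$. By uniform continuity there exists $r_g \in (0,\mathrm{inj}(\mathcal{M}))$ such that
\begin{align*}
\tfrac{1}{2} \le \sqrt{|G_x(v)|} \le 2 \qquad \text{whenever } |v|<r_g \text{ and } x\in\mathcal{M}.
\end{align*}

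Finally, for $r<r_g$, inserting these bounds into the integral and using $\mathrm{Vol}_{\mathrm{eucl}}(B(0,r))=\omega_d r^d$ with $\omega_d$ the Euclidean volume of the unit ball in $\R^d$, one obtains
\begin{align*}
\tfrac{1}{2}\omega_d\, r^d \;\le\; \mathrm{Vol}_g(B_r(x)) \;\le\; 2\omega_d\, r^d,
\end{align*}
which gives the stated inequality with $c_g=\omega_d/2$ and $C_g=2\omega_d$. The only real obstacle is setting up the uniform continuity of $\sqrt{|G_x(v)|}$ across the whole manifold; this is handled by taking a finite atlas of normal-coordinate charts and invoking compactness, but it requires a careful (though standard) identification of tangent spaces at nearby base points via parallel transport or a fixed local frame.
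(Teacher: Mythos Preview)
Your proof is correct and follows essentially the same approach as the paper: both use the exponential map to identify small geodesic balls with Euclidean balls in the tangent space, bound the Jacobian of $\exp_x$ away from $0$ and $\infty$ near the origin, and then invoke compactness of $\mathcal{M}$ to make the radius and the constants uniform in $x$. Your version is in fact more explicit than the paper's sketch, since you work in normal coordinates, identify the Jacobian as $\sqrt{|G_x(v)|}$, and produce concrete constants $c_g=\omega_d/2$, $C_g=2\omega_d$ via uniform continuity.
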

\begin{proof}
We will discuss the proof briefly.
We could identify the tangent space $\mathrm{T}_x\mathcal{M}$ isometrically with $\R^d$. 
Now $\exp_x:\R^d \rightarrow \mathcal{M}$ is a diffeomorphism on some small ball $B_{\R^d}(0, r)$. 
On this ball, straight lines are mapped to length-minimizing geodesics (\cite{Carmo}, Proposition 3.6), 
and thus Euclidean balls are mapped to geodesic balls of the same radius. 
Taking $r$ smaller if necessary, we can assume the Jacobian of $\exp_x$ is bounded away from $0$ and $\infty$ on $B_{\R^d}(0,r)$, thus for $r<r_x$ we have that $c_{g_x} r^d \leq \mathrm{Vol}_g⁡(B_r(x))\leq C_{g_x} r^d$.
Since $\mathcal{M}$ is a compact manifold, we could take $r_x, c_{g_x}, C_{g_x}$ independent of $x\in \mathcal{M}$. $\hfill{} \Box$
\end{proof}


A subset $C$ of $\mathcal{M}$ is said to be a {\it geodesically convex set} if, given any two points in $C$, there is a minimizing geodesic contained within $C$ that joins those two points.

The {\it convexity radius at a point} $x\in\mathcal{M}$ is the supremum (which may be $+\infty$) of $r_x\in \R$ such that for all $r<r_x$ the geodesic ball $B_{r_x}(x)$ is geodesically convex. The {\it convexity radius of $(\mathcal{M}, g)$} is the infimum over the points $x \in \mathcal{M}$ of the convexity radii at these points.

\begin{prop}\cite{Berger}\label{ballconvex}
For compact manifold $\mathcal{M}$, the convexity radius $r_{g}^\prime$ of $(\mathcal{M}, g)$ is positive. 
\end{prop}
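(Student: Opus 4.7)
The plan is to first establish that the convexity radius is positive at every individual point of $\mathcal{M}$, and then use compactness to upgrade this to a uniform positive lower bound.

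For the pointwise step, I would fix $x \in \mathcal{M}$ and work in geodesic normal coordinates on a neighborhood where $\exp_x$ is a diffeomorphism (this is possible since the injectivity radius at $x$ is strictly positive: the metric is smooth, so the Jacobian of $\exp_x$ is nondegenerate near $0 \in \mathrm{T}_x\mathcal{M}$, as already used in the proof of Proposition \ref{volumegeoball}). In these coordinates $g_{ij}(0) = \delta_{ij}$ and the Christoffel symbols vanish at the origin, so Jacobi field analysis along radial geodesics yields the expansion
\begin{equation*}
\mathrm{Hess}\bigl(\tfrac{1}{2} d(x,\cdot)^2\bigr)(v,v) \;=\; |v|^2 + O(r^2)|v|^2
\end{equation*}
for tangent vectors $v$ at points within distance $r$ of $x$, where the implicit constant involves only the sectional curvature at $x$. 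Consequently the second fundamental form of the geodesic sphere $\partial B_r(x)$ with respect to the outward unit normal is $\tfrac{1}{r}\,\mathrm{Id} + O(r)$ on the tangent distribution, hence strictly positive definite for all sufficiently small $r$. Once every geodesic sphere in $B_{r_x}(x)$ is strictly convex, a standard argument rules out any minimizing geodesic with endpoints inside $B_r(x)$ from exiting $B_r(x)$: such a geodesic would achieve a local maximum of the distance-to-$x$ function at some interior point where it is tangent to a sphere, contradicting strict convexity of that sphere.

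To pass from pointwise positivity to a uniform bound, I would use compactness of $\mathcal{M}$ and continuity of the relevant quantities. The error terms in the Jacobi-field expansion above depend on the sectional curvature and its covariant derivatives, all of which are continuous and hence uniformly bounded on the compact manifold $\mathcal{M}$. Similarly, the injectivity radius is a continuous (in fact lower semicontinuous suffices) strictly positive function on $\mathcal{M}$, so it attains a positive minimum. Combining these two uniform bounds, the threshold radius at which all geodesic spheres $\partial B_r(x)$ are strictly convex can be chosen independently of $x$, which gives $r_g' > 0$.

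The main obstacle is the strict convexity computation for small geodesic spheres; this is the classical Whitehead--Berger theorem, and the delicate point is the uniformity of the radius across $\mathcal{M}$, which is exactly what compactness delivers through uniform bounds on the curvature tensor and the injectivity radius. Since this is a standard result in Riemannian geometry and the paper cites \cite{Berger}, I would only sketch the Jacobi-field computation and emphasize the compactness step.
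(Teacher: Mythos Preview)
The paper does not give a proof of this proposition at all: it is stated with a citation to \cite{Berger} and used as a black box, so there is no ``paper's own proof'' to compare against. Your sketch is the standard Whitehead--Berger argument and is essentially correct; in the context of this paper it would be more than sufficient, since the authors evidently intend the result to be quoted rather than reproved.
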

This clearly implies that for any $x\in \mathcal{M}$, any $r<r_{g}^\prime$, $B_{r}(x)$ is geodesically convex.

\subsection{Piecewise H\"older functions}\label{piecewiseholderf}
Let $L_{\gamma}(\mathcal{M})$ be the space of $\gamma$-Lipschitz functions on $\mathcal{M}$. 
For $\phi\in L_{\gamma}(\mathcal{M})$ define
\begin{align}\label{Lip}
\|\phi\|_{L_{\gamma}}=\|\phi\|_{\infty}+\sup_{\theta_1, \theta_2\in M}\frac{|\phi(\theta_1)-\phi(\theta_2)|}{dist{(\theta_1, \theta_2)}^{\gamma}}.
\end{align}
We say $\phi$ is piecewise H\"older if there exists $\gamma>0$, positive integer $K$ and $\{\phi_{j}\}_{j=1}^K\subset L_{\gamma}(\mathcal{M})$ so that
\begin{align*}
\phi(\theta)=\sum_{j=1}^{K}\chi_{S_j}(\theta)\phi_j(\theta)
\end{align*}
where $\{S_j\}_{j=1}^M$ are sets with ``good boundary'', namely $\{\partial{S_j}\}_{j=1}^K$ are $d-1$ dimensional smooth submanifolds of $\mathcal{M}$.
Clearly the discontinuity set $J_{\phi}$ of $\phi$ is $\cup_{j=1}^K \partial{S_j}$, and
\begin{align}\label{d-1measurepartial}
\mathrm{Vol}_{g, d-1}(J_{\phi})\leq \sum_{j=1}^K \mathrm{Vol}_{g, d-1}(\partial{S_j})<\infty.
\end{align}
Clearly for any two points $\theta_1, \theta_2$ so that $dist(\theta_i, J_{\phi})\geq r$, if $dist(\theta_1, \theta_2)<r$ then we have
\begin{align}\label{differencenorm}
|\phi(\theta_1)-\phi(\theta_2)|\leq dist(\theta_1, \theta_2)^{\gamma} \sum_{j=1}^K \|\phi_j\|_{L_{\gamma}}.
\end{align}

\subsection{Cocycles and Lyapunov exponent}
We now introduce the Lyapunov exponent. For a given $z\in\C$, a formal solution $u$ of $Hu=zu$  can be reconstructed using the transfer matrix
\begin{align*}
A(\theta,z)=
\left(
\begin{matrix}
z-\phi(\theta)\ \ &-1\\
1\ \ &0
\end{matrix}
\right)
\end{align*}
via the equation
\begin{align*}
\left(
\begin{matrix}
u(n+1)\\
u(n)
\end{matrix}
\right)
=
A(f^n \theta,z)
\left(
\begin{matrix}
u(n)\\
u(n-1)
\end{matrix}
\right)
\end{align*}

Indeed, let $A_k(\theta,z)$ be the product of consecutive transfer matrices:
\begin{align*}
&A_k(\theta,z)=A(f^{k-1}\theta,z)\cdot\cdot\cdot A(f\theta, z)A(\theta,z)\ \ \mathrm{for}\ k>0,\ \ \ \ A_0(\theta,z)=I\ \ \mathrm{and}\\
&A_k(\theta,z)=(A_{-k}(f^k \theta,z))^{-1}\ \ \mathrm{for}\ k<0.
\end{align*}
Then for any $k\in\Z$ we have the following relation
\begin{align*}
\left(
\begin{matrix}
u(k)\\
u(k-1)
\end{matrix}
\right)
=
A_k(\theta,z)
\left(
\begin{matrix}
u(0)\\
u(-1)
\end{matrix}
\right).
\end{align*}

We define the Lyapunov exponent
\begin{align}\label{lyap}
L(z)=\lim_k \frac{1}{k}\int_{\mathcal{M}} \ln{\|A_k(\theta,z)\|}\ \mathrm{d} \mathrm{Vol}_g(\theta)=\inf_{k} \frac{1}{k}\int_{\mathcal{M}} \ln{\|A_k(\theta,z)\|}\ \mathrm{d} \mathrm{Vol}_g(\theta).
\end{align}
Furthermore, $L(z)=\lim_k \frac{1}{k} \ln{\|A_k(\theta,z)\|}$ for $\mathrm{Vol}_g$-$a.e.\ \theta\in \mathcal{M}$.

\subsection{Spectral measure and integrated density of states}
Let $\mu_{\theta}$ be the spectral measure of $H_{\theta}$ corresponding to $\delta_0$ defined by
\begin{align*}
\langle (H_{\theta}-z)^{-1}\delta_0,\delta_0\rangle=\int_{\R} \frac{d\mu_{\theta}(x)}{x-z}.
\end{align*}
Then clearly $\mu_{f \theta}$ is the spectral measure of $H_{\theta}$ corresponding to $\delta_1$. Let $N=\int_{\mathcal{M}}\mu_{\theta}\ \mathrm{d}\mathrm{Vol}_g(\theta)$ be the integrated density of states. 
Then $N=\int_{\mathcal{M}} \frac{\mu_{\theta}+\mu_{f \theta}}{2}\ \mathrm{d}\mathrm{Vol}_g(\theta)$, so $N(U)>0$ for some set $U$ implies $\frac{\mu_{\theta}+\mu_{f\theta}}{2}(U)>0$ for $\mathrm{Vol}_g$-$a.e.\ \theta\in \mathcal{M}$.

\subsection{Rational approximation}
\subsubsection{Single frequency}
Let $\alpha$ be an irrational number and let $\{\frac{p_n}{q_n}\}$ be its continued fraction approximants. We have the following properties (see e.g.\cite{Continuedfrac}):
\begin{equation}\label{qnqn+1}
\frac{1}{2q_{n+1}}\leq \|q_n\alpha\|_{\T}\leq \frac{1}{q_{n+1}}.
\end{equation}
\begin{equation}\label{kqn}
 \|k\alpha\| >\|q_n \alpha\|\ \mathrm{for}\ q_n < k < q_{n+1}.
\end{equation}
\begin{enumerate}
\item If $\alpha\in DC(c, \tau)$ for some $c>0$, we have
\begin{equation}\label{Dioctau}
\|k\alpha\|_{\T}\geq \frac{c}{|k|^\tau}\ \ \mathrm{for}\ \mathrm{any}\ k\neq 0.
\end{equation}
In particular, combining (\ref{qnqn+1}) with (\ref{Dioctau}) we have
\begin{equation}\label{Dioqnqn+1}
c q_{n+1}\leq q_n^\tau.
\end{equation}
\item If $\alpha \notin DC(\tau)$, there exists a subsequence of the continued fraction approximants $\{\frac{p_{n_k}}{q_{n_k}}\}$ so that
\begin{equation}\label{NonDio}
q_{n_k+1}>q_{n_k}^\tau.
\end{equation}
\end{enumerate}
\subsubsection{Multiple frequencies}
Let $\alpha=(\alpha_1, \alpha_2,..., \alpha_d)$ be a set of irrational frequencies. Let $\{\frac{\vec{p}_n}{q_n}\}$ be its best simultaneous approximation with respect to the Euclidean norm on $\T^d$, namely, 
$$\sum_{j=1}^d{\|q_n\alpha_j\|_{\T}^2}<\sum_{j=1}^d{\|k \alpha_j\|_{\T}^2}\ \ \mathrm{for}\ \ \mathrm{any}\ |k|<q_n.$$
Clearly by the pigeonhole principle, we have
\begin{equation}\label{bestsimqnqn+1}
\sqrt{\sum_{j=1}^d{\|q_n\alpha_j\|_{\T}}^2}\leq \frac{2{\Gamma (\frac{d}{2}+1)}^{\frac{1}{d}}}{\sqrt{\pi}q_{n+1}^{\frac{1}{d}}}.
\end{equation}
We say that 
\begin{enumerate}
\item  $\alpha\in DC(c, \tau)$, if
\begin{equation}\label{multiDiotau}
\|\langle \vec{k}, \alpha \rangle\|_{\T}\geq \frac{c}{r(\vec{k})^\tau}\ \ \mathrm{for}\ \mathrm{any}\ \vec{k}\in \Z^d \backslash \{\vec{0}\}.
\end{equation}

\item  $\alpha\in WDC(c, \tau)$, if
\begin{equation}\label{multiweakDio}
\max_{1\leq j\leq d}{\|k \alpha_j\|_{\T}}\geq \frac{c}{|k|^\tau}\ \ \mathrm{for}\ \mathrm{any}\ k\in \Z\backslash \{\vec{0}\}.
\end{equation}
\end{enumerate}

\subsection{Discrepancy}
Let $\vec{x}_1,..., \vec{x}_N\in \mathcal{M}$. For a subset $C$ of $\mathcal{M}$, let $A(C;\{\vec{x}_n\})$ be the counting function 
\begin{align}\label{counting}
A(C;\{\vec{x}_n\}_{n=1}^N)=\sum_{n=1}^{N}\chi_{C}(\vec{x}_n)
\end{align}
The {\it isotropic discrepancy} $J_N(\{\vec{x}_n\}_{n=1}^N)$ is defined as
\begin{align}\label{defisodiscre}
J_N(\{\vec{x}_n\}_{n=1}^N)=\sup_{C\in \mathscr{C}}|\frac{A(C; \{\vec{x}_n\}_{n=1}^N}{N}-\mathrm{Vol}_g (C)|,
\end{align} 
where $\mathscr{C}$ is the family of all geodesically convex subsets of $\mathcal{M}$.

For a point $\theta\in \mathcal{M}$, let $J_N(\theta)=J(\{f^n\theta\}_{n=0}^{N-1})$. 
We say a map $f: \mathcal{M}\rightarrow \mathcal{M}$ has {\it{strongly $\delta$-bounded isotropic discrepancy}} if for some $N>N_0$, $J_N(\theta)\leq N^{-\delta}$ uniformly in $\theta\in \mathcal{M}$. 
We say $f$ has {\it{weakly $\delta$-bounded isotropic discrepany}} if there is a subsequence $\{N_j\}$ such that $J_{N_j}(\theta)\leq N_j^{-\delta}$ uniformly in $\theta\in \mathcal{M}$. 

If $\mathcal{M}=\T^d$ is the d-dimensional torus, we define the {\it discrepancy} $D_N(\{\vec{x}_n\}_{n=1}^N)$ as follows
\begin{align}\label{discrepancy}
D(\{\vec{x}_n\}_{n=1}^N)=\sup_{C\in \mathscr{J}} |\frac{A(C;\{\vec{x}\}_{n=1}^N)}{N}-m(C)|,
\end{align}
where $\mathscr{J}$ is the family of subintervals $C$ of the form $C=\{(\theta_1,...,\theta_d)\in \T^d: \beta_i\leq \theta_i<\kappa_i\ \mathrm{for} \ 1\leq i\leq d \}$.

For a point $\theta\in \T^d$, let $D_N(\theta)=D(\{f^n\theta\}_{n=0}^{N-1})$. 
We say a map $f: \T^d\rightarrow \T^d$ has {\it{strongly $\delta$-bounded discrepancy}} if for some $N>N_0$, $D_N(\theta)\leq N^{-\delta}$ uniformly in $\theta\in \T^d$. 
We will say $f$ has {\it{weakly $\delta$-bounded discrepany}} if there is a subsequence $\{N_j\}$ such that $D_{N_j}(\theta)\leq N_j^{-\delta}$ uniformly in $\theta\in \T^d$. 

When $\mathcal{M}=\T^d$, the isotropic discrepancy and discrepancy can be tightly controled by each other:
\begin{lemma}$\mathrm{(}$\cite{KN}, Theorem 1.6 in Chapter 2$\mathrm{)}$
For any sequence $\{\vec{x}_n\}_{n=1}^N$ in $\T^d$, we have
\begin{align}\label{disandisodis}
D_N(\{\vec{x}_n\}_{n=1}^N)\leq J_N(\{\vec{x}_n\}_{n=1}^N)\leq (4d\sqrt{d}+1)D_N(\{\vec{x}_n\}_{n=1}^N)^{\frac{1}{d}}.
\end{align}
\end{lemma}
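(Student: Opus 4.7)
The plan is to prove the two inequalities separately, treating the left one as essentially tautological and the right one as the real content.

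For the left inequality $D_N \leq J_N$: every box $\{(\theta_1,\ldots,\theta_d) : \beta_i\leq\theta_i<\kappa_i\}$ appearing in the definition of $D_N$ is a geodesically convex subset of $\T^d$. If such a box wraps around some of the circle factors, I would split it into at most $2^d$ non-wrapping pieces, each of which lifts to a genuinely convex set in a fundamental domain. Thus the supremum defining $D_N$ is taken over a sub-family of $\mathscr{C}$, and the bound is immediate.

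For the harder inequality $J_N\leq (4d\sqrt{d}+1)D_N^{1/d}$, I would fix a geodesically convex set $C$ and a positive integer $m$ to be optimized later, and partition $\T^d$ into the standard grid of closed cubes of side $1/m$. Let $P$ be the union of grid cubes entirely contained in $C$ and $Q$ the union of grid cubes meeting $C$, so that $P\subset C\subset Q$. The decisive geometric fact is convexity of $C$: each axis-parallel line meets $C$ in a connected interval, so in every ``column'' of the grid the cubes comprising $P$ form a single contiguous stack. Hence $P$ is the disjoint union of at most $m^{d-1}$ axis-aligned boxes (one per column), and the same holds for $Q$.

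Since each of those constituent boxes lies in the family $\mathscr{J}$, each contributes at most $D_N$ to the counting/volume defect; summing,
\begin{equation*}
\left|\frac{A(P;\{\vec{x}_n\})}{N}-\mathrm{Vol}_g(P)\right| \leq m^{d-1}D_N,
\end{equation*}
and similarly for $Q$. A standard surface-area estimate for convex bodies (the number of mesh cubes crossed by $\partial C$ is $O(m^{d-1})$, with a constant of order $d\sqrt d$ in the isotropic setting because the worst case is a hyperplane with normal in a diagonal direction) gives $\mathrm{Vol}_g(Q)-\mathrm{Vol}_g(P)\leq 4d\sqrt{d}/m$. Sandwiching $C$ between $P$ and $Q$ yields
\begin{equation*}
\left|\frac{A(C;\{\vec{x}_n\})}{N}-\mathrm{Vol}_g(C)\right| \leq m^{d-1}D_N + \frac{4d\sqrt{d}}{m}.
\end{equation*}
Choosing $m=\lfloor D_N^{-1/d}\rfloor$ balances the two terms and yields the claimed bound with constant $4d\sqrt{d}+1$ after a careful bookkeeping.

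The main obstacle is the surface-area estimate with the sharp constant $4d\sqrt{d}$: one has to bound, for each coordinate direction, how many mesh cubes are crossed by the graph $\{\theta : \theta_i = \psi_i(\theta_{\neq i})\}$ of the two ``cap functions'' that sandwich a convex body $C$ in the $i$-th direction. The column-convexity decomposition of $P$ is the conceptually clean step; recovering the dimensional constant of Kuipers--Niederreiter is the technical one, but it is routine given the structural setup above.
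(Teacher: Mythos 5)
The paper itself does not prove this lemma; it is quoted verbatim from Kuipers--Niederreiter with a citation, so there is no in-paper proof to compare against. Your sketch is essentially the proof that appears in that reference, and the structure is sound: bound $D_N\le J_N$ by noting boxes are convex; for the reverse direction, sandwich a convex $C$ between grid-cube unions $P\subset C\subset Q$ on a mesh of side $1/m$, use column-convexity to split each of $P,Q$ into at most $m^{d-1}$ axis-aligned boxes, apply $D_N$ per box, control $|Q|-|P|$ by a boundary/neighborhood estimate, then optimize $m\sim D_N^{-1/d}$. The column-contiguity argument you give for $P$ and $Q$ is exactly the right convexity input, and the choice of $m$ gives the two terms the same order, as needed.

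Two small points worth tightening. First, your treatment of the left inequality is slightly off as stated: if a box $C\in\mathscr J$ genuinely has to be split into up to $2^d$ pieces before it lies in $\mathscr C$, then subadditivity only gives $D_N\le 2^d J_N$, not $D_N\le J_N$; what you actually want is the observation that one may replace $\mathscr J$ with its subfamily of non-wrapping boxes without changing $D_N$ (periodicity lets you translate the point set), and a non-wrapping box is itself geodesically convex on $\T^d$ provided its side lengths are at most $1/2$, which one may also arrange by the same reduction. Second, the constant $4d\sqrt d$: the cubes in $Q\setminus P$ are those meeting $\partial C$, hence contained in the $\sqrt d/m$-neighborhood of $\partial C$; bounding the volume of that neighborhood by the surface-area-type estimate for convex bodies in $[0,1]^d$ (outer plus inner parallel body, each contributing at most $2d\cdot\sqrt d/m$) produces the claimed $4d\sqrt d/m$. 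These are exactly the ``bookkeeping'' steps you flag, and they go through; your outline is correct and coincides with the cited argument.
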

Therefore, by (\ref{disandisodis}), when $\mathcal{M}=\T^d$, 
\begin{prop}\label{isoequ}
$f$ has strongly (weakly) $\delta$-bounded isotropic discrepancy for some $\delta>0$ if and only if $f$ has strongly (weakly) $\tilde{\delta}$-bounded discrepancy for some $\tilde{\delta}>0$.
\end{prop}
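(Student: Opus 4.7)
The plan is to derive Proposition \ref{isoequ} directly from the two-sided comparison inequality (\ref{disandisodis}) by reading off the behavior of each side along the relevant sequences of $N$ and absorbing constants into a slight loss in the exponent.

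First I would treat the easy direction. Assume $f$ has strongly (respectively weakly) $\delta$-bounded isotropic discrepancy, so that $J_N(\theta) \leq N^{-\delta}$ holds uniformly in $\theta$ for all large $N$ (resp.\ along a subsequence $N_j$). The left-hand inequality of (\ref{disandisodis}) immediately gives
\begin{equation*}
D_N(\theta) \leq J_N(\theta) \leq N^{-\delta},
\end{equation*}
uniformly in $\theta$, for the same range of $N$. Therefore one can take $\tilde{\delta} = \delta$ and obtain strongly (resp.\ weakly) $\tilde{\delta}$-bounded discrepancy with no further work.

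Second, for the converse, assume $f$ has strongly (resp.\ weakly) $\tilde{\delta}$-bounded discrepancy, so $D_N(\theta) \leq N^{-\tilde{\delta}}$ uniformly in $\theta$ for large $N$ (resp.\ along $N_j$). The right-hand inequality of (\ref{disandisodis}) then yields
\begin{equation*}
J_N(\theta) \leq (4d\sqrt{d}+1)\, D_N(\theta)^{1/d} \leq (4d\sqrt{d}+1)\, N^{-\tilde{\delta}/d}.
\end{equation*}
Fix any $\delta$ with $0 < \delta < \tilde{\delta}/d$. Since $(4d\sqrt{d}+1) N^{-\tilde{\delta}/d} \leq N^{-\delta}$ for all $N$ sufficiently large (the implicit threshold depends only on $d$ and $\tilde{\delta}$, not on $\theta$), uniformity in $\theta$ is preserved, and we obtain strongly (resp.\ weakly) $\delta$-bounded isotropic discrepancy.

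There is no real obstacle here; the only subtlety worth flagging is the loss of a factor $d$ in the exponent coming from the geometric constant in (\ref{disandisodis}), which is why the equivalence is stated only in the qualitative form ``for some $\delta > 0$'' rather than preserving the value of the exponent. The uniformity in $\theta$ passes transparently through both inequalities because the constants $1$ and $4d\sqrt{d}+1$ are absolute, so both the strong and weak versions of the statement transfer in lockstep.
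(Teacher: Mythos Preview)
Your argument is correct and is exactly the approach the paper takes: the proposition is stated as an immediate consequence of inequality (\ref{disandisodis}), and you have simply written out the obvious details. The only thing the paper leaves implicit that you make explicit is absorbing the constant $4d\sqrt{d}+1$ by slightly lowering the exponent, which is entirely routine.
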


In section 5 and 6 we are going to apply the following two inequalities to estimate the discrepancy from above.
\begin{lemma}\cite{K-ETK} $\mathrm{[}$Erd\"os-Tur$\acute{a}$n-Koksma $\mathrm{inequality}\mathrm{]}$\label{ETK}
For any positive integer $H_0$, we have
\begin{equation}
D(\{\vec{x}_n\}_{n=1}^N)\leq C_d(\frac{1}{H_0}+\sum_{0<|\vec{h}|\leq H_0} \frac{1}{r(\vec{h})}|\frac{1}{N}\sum_{n=1}^N e^{2\pi i \langle \vec{h}, \vec{x}_n\rangle}|)
\end{equation}
where $|\vec{h}|=\max_{j=1}^d |h_j|$.
\end{lemma}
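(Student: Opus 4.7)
The plan is to prove the Erdős--Turán--Koksma inequality via the classical strategy of sandwiching the characteristic function of a box by trigonometric polynomials of controlled degree and Fourier coefficient size, following Koksma's approach.

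First, I would reduce to a single box. Fix a box $C = \prod_{i=1}^d [\beta_i, \kappa_i) \subset \T^d$. The quantity to control is $\bigl|\tfrac{1}{N}\sum_{n=1}^N \chi_C(\vec{x}_n) - m(C)\bigr|$, uniformly over all such $C$. I would construct Vaaly's trigonometric polynomial majorant and minorant $P^{\pm}$ of degree at most $H_0$ in each coordinate, satisfying
\begin{equation*}
P^-(\vec{x}) \leq \chi_C(\vec{x}) \leq P^+(\vec{x}), \qquad \int_{\T^d}(P^+ - P^-)\, d\vec{x} \leq \frac{C_d}{H_0},
\end{equation*}
with Fourier coefficients obeying $|\hat{P}^{\pm}(\vec{h})| \leq C_d/r(\vec{h})$ for $\vec{h}\neq \vec{0}$, and $\hat{P}^{\pm}(\vec 0) = m(C) \pm O(1/H_0)$. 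In one dimension this is the Selberg/Vaaly construction using the Fejér kernel convolved with an indicator; in $d$ dimensions one takes products of the one-dimensional approximants and uses inclusion-exclusion to approximate the box.

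Next, I would run the sandwich estimate. From $P^- \leq \chi_C \leq P^+$,
\begin{align*}
\frac{1}{N}\sum_{n=1}^N \chi_C(\vec{x}_n) - m(C) &\leq \frac{1}{N}\sum_{n=1}^N P^+(\vec{x}_n) - m(C) \\
&= \bigl(\hat{P}^+(\vec 0) - m(C)\bigr) + \sum_{0<|\vec{h}|\leq H_0} \hat{P}^+(\vec{h}) \cdot \frac{1}{N}\sum_{n=1}^N e^{2\pi i \langle \vec{h}, \vec{x}_n\rangle}.
\end{align*}
Applying the Fourier coefficient bounds term by term gives the upper bound in the statement. The lower bound follows identically from $P^-$. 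Taking the supremum over all boxes $C$ yields the Erdős--Turán--Koksma inequality.

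The main technical obstacle is the construction of the multivariate Vaaly polynomials with the stated Fourier coefficient decay in $r(\vec h) = \prod \max(|h_i|,1)$; this product structure in the denominator is precisely what forces the tensor-product construction, and verifying both the sandwich inequality and the Fourier decay simultaneously requires care at the boundary of the box. Since the paper invokes this as a citation from \cite{K-ETK}, I would present the construction succinctly and refer the reader to the standard reference for the detailed verification of the coefficient bounds on $\hat{P}^{\pm}$.
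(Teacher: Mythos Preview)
The paper does not prove this lemma at all; it is stated with a citation to \cite{K-ETK} and used as a black box. Your outline via Selberg--Vaaler (not ``Vaaly'') trigonometric majorants and minorants is indeed the standard route to the Erd\H{o}s--Tur\'an--Koksma inequality and is correct in spirit, so there is nothing to compare against here beyond noting that the paper simply imports the result.
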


\begin{lemma}$\mathrm{(}$e.g. \cite{KN}, Lemma 3.1 in Chapter 1$\mathrm{)}$ $\mathrm{[}$Van der Corput's $\mathrm{Fundamental}\ \mathrm{Inequality}\mathrm{]}$\label{VDC}
For any integer $1\leq H\leq N$, we have
\begin{equation}
|\frac{1}{N}\sum_{n=1}^N u_n|^2\leq \frac{N+H-1}{N^2H}\sum_{n=1}^N |u_n|^2+\frac{2(N+H-1)}{N^2H^2}\sum_{k=1}^{H-1}(H-k) \mathrm{Re}\sum_{n=1}^{N-k}u_n\overline{u_{n+k}}.
\end{equation}
\end{lemma}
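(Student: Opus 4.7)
The plan is to prove this by the classical ``smearing and Cauchy--Schwarz'' trick: instead of estimating $\sum_{n=1}^N u_n$ directly, I replace it by a sum of shifted copies of $u$, apply Cauchy--Schwarz on the longer sum, and then identify the off-diagonal terms that appear as the correlation sums $\sum u_n \overline{u_{n+k}}$.

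Concretely, I would extend $u_n$ by zero for $n\leq 0$ and $n>N$, and define, for $j=1,\ldots,N+H-1$,
\begin{equation*}
v_j=\sum_{h=0}^{H-1} u_{j-h}.
\end{equation*}
Setting $T=\sum_{n=1}^N u_n$, a simple reindexing shows $\sum_{j=1}^{N+H-1} v_j = H T$, because for each fixed $h\in\{0,\ldots,H-1\}$ the index $m=j-h$ runs through $\{1,\ldots,N\}$ (plus some zero terms). Cauchy--Schwarz on the left then gives
\begin{equation*}
H^2 |T|^2 = \Bigl|\sum_{j=1}^{N+H-1} v_j\Bigr|^2 \leq (N+H-1)\sum_{j=1}^{N+H-1}|v_j|^2.
\end{equation*}

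Next I would expand $|v_j|^2=\sum_{h_1,h_2=0}^{H-1} u_{j-h_1}\overline{u_{j-h_2}}$ and swap the order of summation. Grouping pairs $(h_1,h_2)$ by their difference $k=h_2-h_1$, the diagonal $k=0$ contributes $H\sum_{n=1}^N |u_n|^2$, while for each $k\in\{1,\ldots,H-1\}$ there are exactly $H-k$ pairs with $h_2-h_1=k$ and $H-k$ pairs with $h_1-h_2=k$. Using $\overline{u_m u_{m+k}} = \overline{u_m} u_{m+k}$ and combining conjugate pairs as a real part, this produces
\begin{equation*}
\sum_{j=1}^{N+H-1}|v_j|^2 = H\sum_{n=1}^N |u_n|^2 + 2\sum_{k=1}^{H-1}(H-k)\,\mathrm{Re}\sum_{n=1}^{N-k} u_n\overline{u_{n+k}}.
\end{equation*}
Plugging this into the Cauchy--Schwarz bound and dividing by $N^2 H^2$ gives exactly the stated inequality.

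The main obstacle is purely combinatorial bookkeeping: one needs to be careful that after extending $u_n$ by zero the inner sums in the $k\neq 0$ terms really have range $1\leq n\leq N-k$ (and not some larger range), and that each pair $(h_1,h_2)$ is counted exactly once when reorganizing by $k$. There is no analytic depth beyond Cauchy--Schwarz; the cleverness is entirely in the choice of the auxiliary sequence $v_j$, which effectively transforms a first moment of $u$ into a second moment that exposes the two-point correlations $u_n\overline{u_{n+k}}$.
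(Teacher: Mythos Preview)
The paper does not give its own proof of this lemma; it is simply quoted from Kuipers--Niederreiter as a standard tool. Your argument is the classical proof (exactly the one in \cite{KN}) and is correct, with one cosmetic slip: the identity you wrote, $\overline{u_m u_{m+k}} = \overline{u_m}\, u_{m+k}$, is false as stated---what you actually use is that the contribution from pairs with $h_2-h_1=k>0$ is $\sum_m u_{m+k}\overline{u_m}$, which is the complex conjugate of the contribution from pairs with $h_1-h_2=k$, namely $\sum_n u_n\overline{u_{n+k}}$; their sum is then $2\,\mathrm{Re}\sum_{n=1}^{N-k} u_n\overline{u_{n+k}}$. With that correction the bookkeeping is clean and the proof goes through.
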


\section{key lemmas and proofs of Theorem \ref{xi} - \ref{beta2d}}
\subsection{Covering $\mathcal{M}$ with the orbit of a geodesic ball and proofs of Theorem \ref{xi}, \ref{xi2d}, \ref{beta} and \ref{beta2d}}
\begin{lemma}\label{stepxi}
Let $\phi$ be a piecewise H\"older function with $1\geq \gamma>0$. Suppose $L(E)$ is positive on a Borel subset $U$ with $N(U)>0$. 
\begin{enumerate}
\item If there exists a sequence $r_k\rightarrow 0$ so that any geodesic ball in $\mathcal{M}$ with radius $r_k$ covers the whole $\mathcal{M}$ in $r_k^{-M}$ steps, then $\underline{\xi_{\theta}}=0$ for $\mathrm{Vol}_g$-a.e. $\theta\in \mathcal{M}$;
\item If for any small $r>0$, any geodesic ball with radius $r$ covers the whole $\mathcal{M}$ in $r^{-M}$ steps, then $\overline{\xi_{\theta}}=0$ for $\mathrm{Vol}_g$-a.e. $\theta\in \mathcal{M}$;
\end{enumerate}
\end{lemma}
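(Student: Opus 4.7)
The plan is to translate the covering hypothesis into uniform-in-phase exponential growth of the transfer matrices $A_N(\theta,E)$ at a suitable scale $N$, for energies $E$ in a subset of $U$ with positive density-of-states measure, and then to feed this growth into a Damanik--Tcheremchantsev resolvent bound in order to confine the $U$-portion of the wavepacket to a subpolynomial box.

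\textbf{Spectral reduction.} Since $N=\int_{\mathcal{M}}\mu_{\theta}\,\mathrm{d}\mathrm{Vol}_g$, the hypothesis $N(U)>0$ forces $(\mu_{\theta}+\mu_{f\theta})(U)>0$ for $\mathrm{Vol}_g$-a.e.\ $\theta$; fix such a $\theta$. It will suffice to show that, for every $\varepsilon>0$, the spectrally $U$-restricted part of $a_{\theta}(n,t)+a_{f\theta}(n,t)$ is concentrated on $|n|\le T^{\varepsilon}$ (for every large $T$ in case (2), along a subsequence $T_k\to\infty$ in case (1)); together with $(\mu_{\theta}+\mu_{f\theta})(U)>0$ this forces $\overline{\xi_{\theta}}\le\varepsilon$ (resp.\ $\underline{\xi_{\theta}}\le\varepsilon$) and the result follows by letting $\varepsilon\to 0$.

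\textbf{Uniform matrix growth from covering.} Positivity of $L(E)$ on $U'\subset U$ with $N(U')>0$, Kingman's theorem, and unique ergodicity will supply $L_0>0$, a subset $U''\subset U'$ with $N(U'')>0$, and a reference phase $\theta^{\ast}$ with $\|A_n(\theta^{\ast},E)\|\ge e^{L_0 n}$ for every $E\in U''$ and every large $n$. Given any phase $\theta$, the covering hypothesis yields an index $k=k(\theta)\le r^{-M}$ with $\mathrm{dist}(f^{k}\theta,\theta^{\ast})<r$. Writing $A_N(\theta,E)=A_{N-k}(f^{k}\theta,E)\,A_{k}(\theta,E)$, using the trivial bound $\|A_{k}(\theta,E)^{-1}\|\le e^{Ck}$, and invoking the piecewise-H\"older comparison (\ref{differencenorm}) along the orbit segment of length $N-k$ starting at $f^{k}\theta$, I would derive
\begin{equation*}
\|A_N(\theta,E)\|\;\ge\;e^{L_0(N-k)-Ck}\;-\;e^{CN}r^{\gamma},
\end{equation*}
valid outside an $O(r^{\gamma})$-measure neighbourhood of the discontinuity set $J_\phi$. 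After tuning $r$ compatibly with the covering constraint and iterating the covering property to propagate in short blocks between successive $r$-close returns to $\theta^{\ast}$, one should retain $\|A_N(\theta,E)\|\ge e^{(L_0/4)N}$ for all good $\theta$ and all $E\in U''$.

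\textbf{Transport bound.} The Damanik--Tcheremchantsev resolvent estimate, schematically,
\begin{equation*}
\sum_{|n|\ge N}\bigl\langle a_{\theta}(n,t)\,\chi_{U}(H_{\theta})\bigr\rangle_T \;\lesssim\; T^{C}\!\int_{U}\!\bigl(\|A_N(\theta,E)\|^{-2}+\|A_N(f\theta,E)\|^{-2}\bigr)\,dE,
\end{equation*}
combined with the matrix lower bound above, makes the right-hand side exponentially small in $N$. Choosing $N=T^{\varepsilon}$ in case (2), or $N=r_k^{-M}$ with $T_k=e^{o(N)}$ in case (1), gives $P_{\theta,T}(T^{\varepsilon})+P_{f\theta,T}(T^{\varepsilon})\ge\tfrac12(\mu_{\theta}+\mu_{f\theta})(U)$ for the relevant $T$, and hence the claimed vanishing of the exponents.

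\textbf{Main obstacle.} The principal difficulty lies in the matrix-growth step: a naive telescoping H\"older comparison across $N-k$ factors costs $e^{CN}r^{\gamma}$, which is incompatible with $r\gtrsim N^{-1/M}$. The fix must be to perform the propagation block-by-block, using an iteration of the covering property to produce many intermediate $r$-close returns to $\theta^{\ast}$ and thereby reset the H\"older error, and to combine this with an Egoroff-type removal of phases whose orbit ever lands within $r$ of $J_\phi$. Verifying that the excluded set has vanishing $\mathrm{Vol}_g$-measure (so that the a.e.\ conclusion and the spectral projection estimate above survive) is the most delicate part of the argument.
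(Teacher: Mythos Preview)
Your proposal contains a genuine gap in the matrix-growth step, and the fix you suggest does not resolve it. You try to compare $A_{N-k}(f^k\theta,E)$ to $A_{N-k}(\theta^*,E)$ via H\"older continuity over $N-k\sim N$ factors, with $r\sim N^{-1/M}$ forced by the covering hypothesis; as you correctly note, the telescoping error $e^{CN}r^{\gamma}$ then dominates. The ``block-by-block'' reset you propose does not help: each block still needs length comparable to the covering time $r^{-M}$ to produce a close return, so the same mismatch recurs at every scale. There is also a secondary issue: extracting a \emph{single} reference phase $\theta^*$ with $\|A_n(\theta^*,E)\|\ge e^{L_0 n}$ simultaneously for all $E\in U''$ does not follow from Kingman, which gives a.e.\ $\theta$ convergence for each fixed $E$.

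The paper sidesteps both problems by decoupling the matrix length from the covering time. One fixes $n$ and uses the uniform upper bound $\frac{1}{n}\ln\|A_n(\theta,E)\|<L(E)+\epsilon$ (valid for all $\theta$, for $E$ outside a small bad set) together with $\int\frac{1}{n}\ln\|A_n\|\ge L(E)$ to deduce that the set of $\theta$ with $\|A_n(\theta,E)\|>e^{n(L(E)-\epsilon)}$ has volume $>\tfrac12$; in particular it contains a geodesic ball of radius $r=e^{-5\epsilon n/\gamma}$ (after excising a thin neighbourhood of the discontinuity set, which is where $V(f)=V(f^{-1})=0$ enters). Now the H\"older comparison is only across $n$ factors with perturbation $r=e^{-5\epsilon n/\gamma}$, so the error $e^{n(L+\epsilon)}r^{\gamma}\sim e^{n(L-4\epsilon)}$ is harmless. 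The covering hypothesis then guarantees that for every $\theta$ some iterate $f^j\theta$ with $|j|\le r^{-M}=e^{5M\epsilon n/\gamma}$ lies in this ball, yielding $\max_{|j|\le e^{5M\epsilon n/\gamma}}\|A_n(f^j\theta,z)\|\ge e^{n(L(E)-3\epsilon)}$. Choosing $\epsilon$ small makes the window $e^{5M\epsilon n/\gamma}$ subpolynomial in $e^{nL(E)/2}$. The transport conclusion then follows from the Killip--Kiselev--Last lower bound on $P_{\theta,T}(L)+P_{f\theta,T}(L)$ in terms of truncated $\ell^2$ norms of $A_{\bullet}$, not from a Damanik--Tcheremchantsev resolvent integral (the latter is what the paper uses for $\beta^{\pm}$, not for $\xi$).
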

\begin{lemma}\label{stepbeta}
Let $\phi$ be a piecewise H\"older function with $1\geq \gamma>0$. Suppose $L(E)$ is continuous in $E$ and $L(E)>0$ for every $E\in \R$.
\begin{enumerate}
\item If there exists a sequence $r_k\rightarrow 0$ so that any geodesic ball in $\mathcal{M}$ with radius $r_k$ covers the whole $\mathcal{M}$ in $r_k^{-M}$ steps, then $\beta^-_{\theta}(p)=0$ for all $\theta\in \mathcal{M}$ and $p>0$;
\item If for any small $r>0$, any geodesic ball with radius $r$ covers the whole $\mathcal{M}$ in $r^{-M}$ steps, then $\beta^+_{\theta}(p)=0$ for all $\theta\in \mathcal{M}$ and $p>0$.
\end{enumerate}
\end{lemma}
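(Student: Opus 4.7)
The plan is to reduce the conclusions of the lemma to uniform-in-$(\theta, E)$ exponential lower bounds on the transfer matrix and then invoke the Damanik--Tcheremchantsev framework \cite{DT1,DT2}: once one shows $\inf_{E \in \Sigma}\log\|A_N(\theta, E)\| \geq \kappa N$ for some fixed $\kappa > 0$ and all large $N$ (resp.\ along a subsequence $N_k$), one obtains $\beta^+_{\theta}(p) = 0$ (resp.\ $\beta^-_{\theta}(p) = 0$) for every $p > 0$. The remaining task is therefore to establish such exponential growth uniformly in $\theta \in \mathcal{M}$ and in $E$ over any compact $\Sigma \supset \sigma(H_\theta)$.

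The uniform-in-$E$ positivity is immediate from the hypothesis: continuity of $L$ together with $L(E) > 0$ on all of $\mathbb{R}$ give $L_0 := \min_{E \in \Sigma} L(E) > 0$. Next I would select a ``good'' base point $\theta_0 \in \mathcal{M}$: by the subadditive ergodic theorem combined with unique ergodicity of $f$ and upper semicontinuity of $L(E)$ in $E$, there is a full-$\mathrm{Vol}_g$-measure set of $\theta_0$ such that
\[
\log \|A_N(\theta_0, E)\| \geq (L_0 - \epsilon)\, N
\]
uniformly in $E \in \Sigma$ for all $N$ sufficiently large. I would additionally pre-select $\theta_0$ so that its $f$-orbit stays at distance at least $N^{-\eta}$ from the discontinuity set $J_\phi$ of $\phi$ for all iterates up to time $N$; this is possible by the codimension bound (\ref{d-1measurepartial}) and a Borel--Cantelli argument.

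Given an arbitrary $\theta \in \mathcal{M}$ and $N$, I would set $r := N^{-1/(2M)}$ so that $r^{-M} = N^{1/2}$, and apply the covering hypothesis to obtain $n(\theta) \leq N^{1/2}$ and $\tilde\theta_0 \in B_r(\theta_0)$ with $f^{n(\theta)}\tilde\theta_0 = \theta$. The cocycle identity
\[
A_N(\theta, E) = A_{n(\theta) + N}\bigl(\tilde\theta_0, E\bigr)\, A_{n(\theta)}\bigl(\tilde\theta_0, E\bigr)^{-1},
\]
combined with (\ref{differencenorm}) and the safety distance from $J_\phi$, allows comparison of $A_{n(\theta)+N}(\tilde\theta_0, E)$ with $A_{n(\theta)+N}(\theta_0, E)$; since $A \in \mathrm{SL}(2,\mathbb{R})$ the trailing inverse factor contributes at most $e^{C N^{1/2}} = e^{o(N)}$. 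Piecing the estimates together yields $\log \|A_N(\theta,E)\| \geq (L_0 - 2\epsilon)\, N$ uniformly in $E \in \Sigma$. The two regimes of the covering hypothesis (a sequence $r_k \to 0$ versus all sufficiently small $r$) deliver the lower bound along a subsequence $N_k$ or for all large $N$, yielding $\beta^-_\theta(p) = 0$ (case (1)) or $\beta^+_\theta(p) = 0$ (case (2)), respectively.

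The main obstacle will be making the cocycle comparison subexponential in $N$. Applying (\ref{differencenorm}) naively to the matrix $A_{n(\theta)+N}$ amplifies the initial perturbation $r$ by roughly $\|A\|^N$, which is ruinously exponential. The resolution, in the spirit of the one-frequency argument of \cite{JM1}, is to compare $\log\|A_{n(\theta)+N}\|$ rather than $A_{n(\theta)+N}$ itself, via a telescoping identity that reduces the log-difference to a sum of per-step logarithmic perturbations. Piecewise H\"older regularity of $\phi$ bounds each term by $C\, r^\gamma$ away from the safety neighborhood of $J_\phi$, and the Borel--Cantelli pre-selection controls the few remaining terms, yielding cumulative error $O(N r^\gamma + N^{1-\eta}) = o(N)$, as required.
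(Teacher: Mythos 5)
Your scheme has two genuine gaps, the first of which is fatal.

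\textbf{There is no single ``good'' base point.} You assert that a full-measure set of $\theta_0$ satisfies $\log\|A_N(\theta_0,E)\| \geq (L_0-\epsilon)N$ uniformly in $E\in\Sigma$ for all large $N$, citing the subadditive ergodic theorem and unique ergodicity. This is not true. The subadditive ergodic theorem gives $\frac{1}{N}\log\|A_N(\theta,E)\|\to L(E)$ for $\mathrm{Vol}_g$-a.e.\ $\theta$ for \emph{each fixed $E$}, with the exceptional set and the rate depending on $E$. Unique ergodicity and upper semicontinuity yield (Furman-type) a uniform \emph{upper} bound $\frac1N\log\|A_N(\theta,E)\|\leq L(E)+\epsilon$ for all $\theta$; they do not give a uniform lower bound, and a deterministic lower bound of this strength at a single $\theta_0$ fails already for the almost Mathieu operator, where for each $\theta_0$ and $N$ there are energies at which the initial vector is nearly aligned with the contracting direction and $\|A_N(\theta_0,E)\|$ is small. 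The correct statement — and the paper's key observation — is only that for each fixed $n$ and $E$ the set $M_{n,E,L(E),\epsilon}=\{\theta:\frac1n\log\|A_n(\theta,E)\|>L(E)-\epsilon\}$ has $\mathrm{Vol}_g$-measure $>\tfrac12$ (this follows from the uniform upper bound plus $\int\frac1n\log\|A_n\|\,\mathrm{dVol}_g\geq L(E)$), and this set \emph{depends on $n$ and $E$}. There is no universal choice. The Damanik--Tcheremchantsev criterion (Lemma \ref{Damanik}) is precisely designed to accommodate this: it only requires that $\max_{1\leq |n|\leq T^\rho}\|A_n(\theta,E+i/T)\|$ be large, i.e.\ that \emph{some nearby iterate} hits the good set, not that $\theta$ itself does. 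The covering hypothesis is what produces such an iterate, after the good set is shown to contain a ball (by removing a thin neighborhood of $J_n=\cup_{l<n}f^{-l}J_\phi$). Your plan of covering backward from a fixed $\theta_0$ bypasses this entire mechanism and therefore cannot work.

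\textbf{Wrong scale for $r$, and the telescoping needs the Furman bound.} You take $r=N^{-1/(2M)}$, polynomially small. But the perturbation argument — even done correctly — requires the Furman uniform upper bound $\|A_j(\cdot,z)\|\leq e^{j(L(E)+\epsilon)}$ applied to both factors in the telescoping identity $A_n(\theta)-A_n(\tilde\theta)=\sum_j A_{n-j-1}(f^{j+1}\theta)[A(f^j\theta)-A(f^j\tilde\theta)]A_j(\tilde\theta)$, which gives an error of order $n\,e^{n(L+\epsilon+\gamma\epsilon)}\,r^\gamma$. For this to be dominated by $e^{n(L-2\epsilon)}$ one needs $r\lesssim e^{-5\epsilon n/\gamma}$, i.e.\ $r$ must be \emph{exponentially} small in the transfer-matrix length $n$, which is the choice the paper makes; your polynomial radius leaves an error that is exponentially larger than the main term. (Consequently the covering runs over $r^{-M}\sim e^{5M\epsilon n/\gamma}$ iterates — exponential in $n$ — but this is fine because $T$ itself is $e^{cn}$, so the iterate count is still only $T^\rho$.) Your ``compare $\log\|A\|$ via per-step logarithmic perturbations'' phrasing does not resolve this — the log of a product of transfer matrices does not decompose into per-step contributions without uniform hyperbolicity — and the actual resolution in \cite{JM1} and the present paper is exactly the Furman upper bound, which you do not invoke. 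Finally, your Borel--Cantelli pre-selection of $\theta_0$ (orbit staying $N^{-\eta}$ from $J_\phi$) is mismatched with the covering scale: the perturbed orbit $\{f^j\tilde\theta_0\}$ strays up to $\sim e^{\epsilon j}r$ from $\{f^j\theta_0\}$, which overwhelms any polynomial safety distance. The paper instead removes a neighborhood of $J_n$ of width comparable to $r$ from the good set $M_{n,z,L(E),2\epsilon}$ \emph{after} the fact and checks the remainder still has positive volume.
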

Lemmas \ref{stepxi} and \ref{stepbeta} are key to our abstract argument. They are proved in section 4. The connection to bounded discrepancy comes in the following 

Let $r_g$ be as in Proposition \ref{volumegeoball} and $r_{g}^\prime$ as in Proposition \ref{ballconvex}.
\begin{lemma}\label{weakcube}
If $f$ has weakly $\delta$-bounded isotropic discrepancy, then there exists $r_k\rightarrow 0$ as $k\rightarrow \infty$ such that any geodesic ball in $\mathcal{M}$ with radius $r_k$ will cover the whole $\mathcal{M}$ in $r_k^{-\frac{2d}{\delta}}$ steps.
\end{lemma}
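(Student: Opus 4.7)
The plan is to apply the definition of the isotropic discrepancy directly to geodesic balls. Since small geodesic balls are themselves geodesically convex by Proposition \ref{ballconvex}, they belong to the test class $\mathscr{C}$ over which $J_N$ is defined, and a uniform visit-count estimate can be read off immediately from the discrepancy bound.

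Concretely, set $r^\star := \min(r_g, r_g')$, so that for every $x\in\mathcal{M}$ and every $r<r^\star$ the ball $B_r(x)$ is geodesically convex and satisfies $\mathrm{Vol}_g(B_r(x))\geq c_g r^d$ (Propositions \ref{volumegeoball} and \ref{ballconvex}). Let $\{N_j\}$ be the subsequence along which $J_{N_j}(\theta)\leq N_j^{-\delta}$ uniformly in $\theta\in\mathcal{M}$, and define $r_j := N_j^{-\delta/(2d)}$, which yields $r_j\to 0$ and $N_j = r_j^{-2d/\delta}$.

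For $j$ large enough that $r_j < r^\star$, applying the defining inequality for $J_{N_j}$ to the convex set $B_{r_j}(x)$ gives, uniformly in $x,\theta\in\mathcal{M}$,
\[
A\bigl(B_{r_j}(x);\{f^n\theta\}_{n=0}^{N_j-1}\bigr)\;\geq\;N_j\bigl(c_g r_j^d - N_j^{-\delta}\bigr)\;=\;r_j^{\,d-2d/\delta}\bigl(c_g-r_j^d\bigr)\;\geq\;1,
\]
the last step holding for $r_j$ sufficiently small (the exponent $d-2d/\delta$ is nonpositive in the nontrivial regime, and even if $\delta$ is large, one may replace it by any smaller positive value for which the conclusion is stronger). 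Hence for every $\theta$ there exists $n\in[0,N_j-1]$ with $f^n\theta\in B_{r_j}(x)$, which is exactly the desired covering statement: $N_j = r_j^{-2d/\delta}$ iterates of any radius-$r_j$ ball exhaust $\mathcal{M}$. The forward/backward direction is immaterial since $\{f^n\theta\}_{n=0}^{N_j-1}$ and $\{f^{-n}f^{N_j-1}\theta\}_{n=0}^{N_j-1}$ are the same point set and therefore carry identical isotropic discrepancies. Taking $r_k := r_{j(k)}$ along any subsequence with $r_k\to 0$ completes the proof.

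I do not anticipate any serious obstacle. The entire point of formulating the hypothesis in terms of \emph{isotropic} rather than ordinary discrepancy is precisely to make geodesic balls admissible test sets, so the argument reduces to a one-line computation once the convexity threshold is in place. The only technical care lies in ensuring $r_j<r^\star$ so that Propositions \ref{volumegeoball} and \ref{ballconvex} apply, which is automatic for $j$ large.
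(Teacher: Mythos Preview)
Your proof is correct and follows essentially the same approach as the paper: choose $r_k = N_k^{-\delta/(2d)}$, invoke Propositions \ref{volumegeoball} and \ref{ballconvex} to ensure small balls are geodesically convex with volume $\geq c_g r_k^d > N_k^{-\delta}$, and read off a positive visit count from the discrepancy bound. Your write-up is in fact slightly cleaner than the paper's (you distinguish the ball center $x$ from the orbit base point $\theta$), and your remark on forward versus backward orbits is a useful clarification; the parenthetical about large $\delta$ is unnecessary, since positivity of the integer-valued counting function already forces $A\geq 1$ regardless of the sign of $d-2d/\delta$.
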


\begin{proof}
There exists a sequence $\{N_k\}$ and $k_0>0$ such that for any $k>k_0$ we have $J_{N_k}(\{f^n\theta\}_{n=0}^{N-1})\leq N_k^{-\delta}$. 
This means for any geodesically convex set $C\subset \mathcal{M}$, 
$\frac{\sum_{n=0}^{N_k-1}\chi_{C}(f^n\theta)}{N_k}-\mathrm{Vol}_g(C)\geq -N_k^{-\delta}$ holds for all $\theta\in \mathcal{M}$. 
Thus if we take $r_k=N_k^{-\frac{\delta}{2d}}<\min{(r_g, r_{g}^\prime)}$, then by Proposition \ref{ballconvex}, we know $B_{r_k}(\theta)$ is geodesically convex.
By Proposition \ref{volumegeoball}, $\mathrm{Vol}_g(B_{r_k}(\theta))\geq c_g r_k^{d}=c_gN_k^{-\frac{\delta}{2}}>N_k^{-\delta}$. 
Thus
$\sum_{n=0}^{r_k^{-\frac{2d}{\delta}}-1}\chi_{B_{r_k}(\theta)}(f^n\theta)>0$ for any $\theta\in \mathcal{M}$.
$\hfill{} \Box$
\end{proof}

\begin{lemma}\label{strongcube}
If $f$ has strongly $\delta$-bounded isotropic discrepancy, then for any $0<r<\min{(r_g, r_{g}^\prime)}$, any geodesic ball in $\mathcal{M}$ with radius $r$ will cover the whole $\mathcal{M}$ in $r^{-\frac{2d}{\delta}}$ steps.
\end{lemma}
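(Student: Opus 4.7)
The plan is to follow the same template as the proof of Lemma \ref{weakcube}, but now with an arbitrary (sufficiently small) radius $r$ in place of a sequence $r_k$ extracted from the weakly-bounded discrepancy sequence $\{N_k\}$. The key observation is that, under the strong assumption, we are free to pick the time horizon $N$ as we wish (so long as $N>N_0$), and we just need to match it to $r$ so that the volume of the ball dominates the discrepancy error.

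Concretely, given $r<\min(r_g,r_g')$, I would set $N=\lceil r^{-2d/\delta}\rceil$. For $r$ sufficiently small, we have $N>N_0$, so the strongly $\delta$-bounded isotropic discrepancy hypothesis gives $J_N(\theta')\leq N^{-\delta}\leq r^{2d}$ uniformly in the starting phase $\theta'\in\mathcal{M}$. By Proposition~\ref{ballconvex}, the ball $B_r(\theta)$ is geodesically convex (since $r<r_g'$), so it is an admissible test set in the supremum defining $J_N$; by Proposition~\ref{volumegeoball}, $\mathrm{Vol}_g(B_r(\theta))\geq c_g r^d$. Shrinking $r$ further if needed to ensure $c_g r^d > r^{2d}$ (i.e.\ $r^d<c_g$), we obtain
\begin{equation*}
\frac{A(B_r(\theta);\{f^n\theta'\}_{n=0}^{N-1})}{N}\;\geq\;\mathrm{Vol}_g(B_r(\theta))-J_N(\theta')\;\geq\;c_g r^d-r^{2d}\;>\;0
\end{equation*}
for every $\theta'\in\mathcal{M}$. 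Hence for every $\theta'$ there exists some $0\leq n\leq N-1=r^{-2d/\delta}-1$ with $f^n\theta'\in B_r(\theta)$, which is precisely the statement that $B_r(\theta)$ covers $\mathcal{M}$ in $r^{-2d/\delta}$ steps (equivalently, $\mathcal{M}=\bigcup_{n=0}^{N-1}f^{-n}B_r(\theta)$).

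There is no genuine obstacle here; the proof is essentially bookkeeping. The one small point requiring care is that the hypothesis is stated as ``$J_N(\theta)\leq N^{-\delta}$ uniformly in $\theta$ for $N>N_0$,'' so our chosen $N=\lceil r^{-2d/\delta}\rceil$ must exceed $N_0$, and we also need $r^d<c_g$ for the volume to beat the discrepancy. Both conditions are satisfied by taking $r$ smaller than a constant depending only on $(\mathcal{M},g,f,\delta)$; this constant can be absorbed into the bound $\min(r_g,r_g')$ in the statement, or equivalently the conclusion holds for all $r$ below a threshold that is uniform in $\theta\in\mathcal{M}$. The exponent $2d/\delta$ arises, exactly as in Lemma~\ref{weakcube}, from matching $N^{-\delta}\approx r^{2d}$ against the ball volume $\approx r^d$, leaving a factor of $r^d$ of slack.
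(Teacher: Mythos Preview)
Your argument is correct and is essentially identical to the paper's proof: set $N\approx r^{-2d/\delta}$, use geodesic convexity of $B_r(\theta)$ so it is an admissible test set for $J_N$, and compare the volume lower bound $c_g r^d$ against the discrepancy bound $r^{2d}$. Your version is in fact slightly cleaner in that you distinguish the ball center $\theta$ from the orbit starting point $\theta'$, and you make explicit the harmless smallness conditions $N>N_0$ and $r^d<c_g$.
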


\begin{proof}
There exists $N_0$ such that for any $N>N_0$ we have $J_N(\{f^n \theta\}_{n=0}^{N-1})\leq N^{-\delta}$ for all $\theta\in \mathcal{M}$. 
This means for any $0<r<\min{(r_g, r_{g}^\prime)}$, any geodesic ball $B_r(\theta)$ (it is geodesically convex by Proposition \ref{ballconvex}) and $N=r^{-\frac{2d}{\delta}}$ we have $\frac{\sum_{n=0}^{r^{-\frac{2d}{\delta}}-1}\chi_{B_r(\theta)}(f^n\theta)}{r^{-\frac{2d}{\delta}}}-\mathrm{Vol}_g(B_r(\theta))\geq -r^{2d}$. 
Since by Proposition \ref{volumegeoball}, $\mathrm{Vol}_g(B_r(\theta))\geq c_g r^{d}>r^{2d}$, we have $\sum_{n=0}^{r^{-\frac{2d}{\delta}}-1}\chi_{B_r(\theta)}(f^n\theta)>0$ for any $\theta\in \mathcal{M}$.     
$\hfill{} \Box$
\end{proof} 

In the case of 2-dimensional irrational rotation, we also have
\begin{lemma}\label{2dsteps}
For any $(\alpha_1,\alpha_2)\in \cup_{\tau>1}WDC(\tau)$, there exists $r_k(\alpha_1,\alpha_2,\tau)  \rightarrow 0$ as $k \rightarrow \infty$ such that any Euclidean ball with radius $r_k$ covers the whole $\T^2$ in $r_k^{-800\tau^4}$ steps.
\end{lemma}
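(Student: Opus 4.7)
The plan is to exploit the weak Diophantine condition through a continued fraction analysis of $\alpha_1$ and $\alpha_2$, locating a sequence of scales $q_{\ast,k}\to\infty$ at which the two-dimensional orbit is uniformly dense in polynomially many steps, and to define $r_k$ as a suitable power of $q_{\ast,k}$.

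First I would consider the continued fraction approximants $p_n/q_n$ of $\alpha_1$, satisfying $\|q_n\alpha_1\|_{\R/\Z}\le 1/q_{n+1}$. Applying $(\alpha_1,\alpha_2)\in WDC(c,\tau)$ at $h=q_n$ gives a dichotomy: either (Case A) $\|q_n\alpha_1\|_{\R/\Z}\ge c/q_n^\tau$, which forces $q_{n+1}\le q_n^\tau/c$ and makes $\alpha_1$ effectively Diophantine at scale $q_n$ but gives no direct control on $\delta_2:=\|q_n\alpha_2\|_{\R/\Z}$; or (Case B) $\|q_n\alpha_2\|_{\R/\Z}\ge c/q_n^\tau$, which controls $\delta_2$ from below. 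Running the symmetric analysis with the continued fraction of $\alpha_2$ and interleaving the two denominator sequences, I would show that every sufficiently large scale $Q$ admits a good denominator $q_\ast\in\{q_n\}\cup\{q_m^{(2)}\}$ with $q_\ast\in[Q,Q^{O(\tau)}]$ at which the Case B analysis (possibly with the roles of $\alpha_1$ and $\alpha_2$ swapped) applies.

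Next I would analyze the iterate $g=f^{q_\ast}$, which is a rigid translation on $\T^2$ by $(\delta_1,\delta_2)$ with $|\delta_1|$ small and $|\delta_2|\ge c/q_\ast^\tau$. Within any block of $q_\ast$ consecutive iterates of $f$, the one-dimensional three-distance theorem applied to rotation by $\alpha_1$ shows that the first coordinate of the orbit is $O(1/q_\ast)$-dense in $\T^1$. Iterating $g$ approximately $1/|\delta_2|\le q_\ast^\tau/c$ times shifts these blocks through $\T^1$ in the second coordinate, so after $N\le q_\ast^{\tau+1}/c$ total steps the orbit should be $O(q_\ast^{-\tau})$-dense in $\T^2$ uniformly in the basepoint. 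Setting $r_k\approx q_{\ast,k}^{-\tau}$ yields $r_k\to 0$ and a covering time $N\le r_k^{-(\tau+1)/\tau}$, comfortably below the claimed $r_k^{-800\tau^4}$ bound.

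The main obstacle will be upgrading marginal coordinate-wise density to joint density in $\T^2$ uniformly over all basepoints $\theta$. Since marginal density in each coordinate does not automatically give 2D density, I would invoke the bounded remainder set technique alluded to in the introduction: any Euclidean ball of radius $r_k$ contains a parallelogram spanned by two short best-approximation vectors of $\alpha$ at scale $q_\ast$, and such parallelograms are bounded remainder sets for the 2D rotation with remainder growing only polynomially under $WDC$, so the orbit enters every such parallelogram (hence every ball of radius $r_k$) in a polynomially bounded number of steps. The very large exponent $800\tau^4$ in the conclusion leaves generous slack to absorb the constants coming from the Case A/B alternation, the uniformity requirement over $\theta$, and the bounded-remainder estimate.
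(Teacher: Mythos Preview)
Your block argument in the second paragraph does not establish joint density at the scale you claim. Even granting that $q_\ast$ is a convergent denominator of $\alpha_1$ and that $\delta_2 := \|q_\ast\alpha_2\|_{\T} \geq c/q_\ast^\tau$, writing $n = jq_\ast + i$ gives first-coordinate density only $O(1/q_\ast)$ (as you yourself note), not $O(q_\ast^{-\tau})$, so with $r_k \approx q_\ast^{-\tau}$ the first coordinate already misses most balls. More seriously, for the second coordinate you need $\{j\delta_2\}_{j=0}^{M-1}$ to be dense in $\T^1$, and that depends on the continued fraction of $\delta_2$, not on the lower bound $\delta_2 \geq c/q_\ast^\tau$: if $\delta_2$ is close to a rational with small denominator, the $g$-orbit in the second coordinate clusters near finitely many points for a long time. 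You also implicitly need $M\delta_1 \ll 1/q_\ast$ (i.e.\ $q_{\ast+1} \gtrsim q_\ast^{\tau+1}$) to keep the first-coordinate density intact as $j$ varies, and your Case B does not provide this.

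The bounded-remainder parallelogram you gesture at is what the paper uses, but the genuine difficulty---absent from your sketch---is bounding the parallelogram's area from below, since the covering time is $\approx |m_n q_{n,s_n}|/|U|$. The paper's $U$ is spanned by a best \emph{simultaneous} approximation vector $m_n\alpha-\vec{l}_n$ and a horizontal vector coming from the continued fraction of the slope $(m_n\alpha_1-l_{1,n})/(m_n\alpha_2-l_{2,n})$; if the next convergent denominator $q_{n,s_n+1}$ of that slope is huge, $|U|$ is tiny. To rule this out the paper introduces a separate dichotomy (coprime Diophantine versus not): in the PDC case it invokes Lagarias's theorem guaranteeing infinitely many $n_k$ with a nonzero $3\times 3$ determinant of three consecutive best simultaneous approximations, and derives a contradiction from the assumption that $q_{n,s_n+1}$ is large for all of $n_k,n_k+1,n_k+2$; in the non-PDC case a near-integer relation $h_1\alpha_1+h_2\alpha_2\approx 0$ collapses the problem to one dimension along a rational line. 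None of this structure is captured by your coordinate-wise continued fraction dichotomy, and I do not see how to close the gap without it.
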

\begin{rem}
This lemma will be proved in section 8.
\end{rem}
We are now ready to complete the proof of the main Theorems.
\subsection*{Proof of Theorem \ref{xi}}
Combining Lemma \ref{weakcube}, \ref{strongcube} with Lemma \ref{stepxi}. $\hfill{} \Box$
\subsection*{Proof of Theorem \ref{xi2d}}
Combining Lemma \ref{2dsteps} with Lemma \ref{stepxi}. $\hfill{} \Box$
\subsection*{Proof of Theorem \ref{beta}}
Combining Lemma \ref{weakcube}, \ref{strongcube} with Lemma \ref{stepbeta}. $\hfill{} \Box$
\subsection*{Proof of Theorem \ref{beta2d}}
Combining Lemma \ref{2dsteps} with Lemma \ref{stepbeta}. $\hfill{} \Box$

\subsection{Estimation of Discrepancy and proofs of Theorems \ref{xitoral}, \ref{betatoral}, \ref{xiskew} and \ref{betaskew}}
 We have the following control of the discrepancies of irrational rotation and skew-shift.
\begin{lemma}\label{toralsbdd}
If $\alpha\in DC(\tau)$, then for some constant $\delta>0$, $D_N(\{\theta+n\alpha\}_{n=0}^{N-1})\leq N^{-\delta}$ uniformly in ${\theta}\in \T^d$.
\end{lemma}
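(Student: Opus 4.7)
The plan is to apply the Erd\"os-Tur\'an-Koksma inequality (Lemma \ref{ETK}) to the orbit $\{\theta+n\alpha\}_{n=0}^{N-1}$ and exploit the fact that each exponential sum $\sum_{n=0}^{N-1} e^{2\pi i\langle \vec h,\theta+n\alpha\rangle}$ is a geometric progression, which reduces the whole problem to the Diophantine control on $\|\langle \vec h,\alpha\rangle\|_{\R/\Z}$.

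First I would observe that for fixed $\vec h\neq \vec 0$,
\begin{equation*}
\Big|\frac{1}{N}\sum_{n=0}^{N-1}e^{2\pi i\langle \vec h,\theta+n\alpha\rangle}\Big|
=\frac{1}{N}\Big|\frac{1-e^{2\pi i N\langle \vec h,\alpha\rangle}}{1-e^{2\pi i\langle \vec h,\alpha\rangle}}\Big|
\leq \frac{1}{2N\,\|\langle \vec h,\alpha\rangle\|_{\R/\Z}},
\end{equation*}
uniformly in $\theta$. Using the assumption $\alpha\in DC(c,\tau)$ from \eqref{multiDiotau}, the right-hand side is bounded above by $\frac{r(\vec h)^{\tau}}{2cN}$.

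Next, fix a parameter $H_0$ to be chosen, and insert this into Lemma \ref{ETK}:
\begin{equation*}
D_N(\{\theta+n\alpha\}_{n=0}^{N-1})\;\leq\; C_d\Big(\frac{1}{H_0}+\frac{1}{2cN}\sum_{0<|\vec h|\leq H_0}r(\vec h)^{\tau-1}\Big).
\end{equation*}
The tail sum is estimated by the crude bound $r(\vec h)\leq H_0^{d}$ combined with the cardinality estimate $\#\{\vec h\in\Z^d:0<|\vec h|\leq H_0\}\leq (2H_0+1)^{d}$, which yields $\sum_{0<|\vec h|\leq H_0}r(\vec h)^{\tau-1}\leq C_d' H_0^{d\tau}$. (A sharper estimate using the factorization of $r(\vec h)$ gives a log-power saving, but any polynomial bound suffices.) Hence
\begin{equation*}
D_N\;\leq\; C_{d,\tau,c}\Big(\frac{1}{H_0}+\frac{H_0^{d\tau}}{N}\Big).
\end{equation*}

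Finally, the optimal choice $H_0=N^{1/(d\tau+1)}$ balances the two terms and produces
\begin{equation*}
D_N(\{\theta+n\alpha\}_{n=0}^{N-1})\;\leq\; C\,N^{-1/(d\tau+1)},
\end{equation*}
uniformly in $\theta\in \T^d$, so $\delta=\frac{1}{d\tau+1}$ works. There is no real obstacle here: the geometric-series identity and the Diophantine lower bound together reduce the argument to a one-variable optimization, and the only moderately technical point is the elementary lattice counting of $\sum r(\vec h)^{\tau-1}$, which is standard.
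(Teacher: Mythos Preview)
Your proof is correct and follows the same strategy as the paper: Erd\H{o}s--Tur\'an--Koksma plus the geometric-series bound $|\frac1N\sum_n e^{2\pi i n\langle\vec h,\alpha\rangle}|\lesssim (N\|\langle\vec h,\alpha\rangle\|)^{-1}$, then the Diophantine lower bound, then optimization in $H_0$. The only difference is that the paper handles the sum $\sum_{0<|\vec h|\le H_0}\frac{1}{r(\vec h)\|\langle\vec h,\alpha\rangle\|}$ via an Abel-summation rearrangement together with the separation of the values $\{\langle\vec h,\alpha\rangle\}$ in the box, obtaining the sharper exponent $\delta\approx 1/(d(\tau-1)+1)$ instead of your $1/(d\tau+1)$; since the lemma only asks for \emph{some} $\delta>0$, your cruder bound $\sum r(\vec h)^{\tau-1}\lesssim H_0^{d\tau}$ is entirely sufficient.
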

Let $\vec{Y}_n=(y_1+\binom{n}{1}\alpha,\ y_2+\binom{n}{1}y_1+\binom{n}{2} \alpha,\ ...,\ y_d+\binom{n}{1}y_{d-1}+\cdots +\binom{n}{d} \alpha)=f^n (y_1, \cdots, y_d)$, where $f$ is the skew shift.
\begin{lemma}\label{skewsbdd}
If $\alpha\in DC(\tau)$, then for some constant $\delta>0$, $D_N(\{\vec{Y}_n\}_{n=1}^N) \leq N^{-\delta}$ uniformly in $(y_1,...,y_d)\in \T^d$. 
\end{lemma}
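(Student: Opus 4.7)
The plan is to combine the Erd\H{o}s-Tur\'an-Koksma inequality (Lemma \ref{ETK}) with repeated Weyl differencing via Van der Corput's inequality (Lemma \ref{VDC}), just as in the classical proof of uniform distribution of $\{n^k \alpha\}$, but tracking the dependence on the initial condition $(y_1,\ldots,y_d)$ and on the frequency $\vec{h}$ to obtain a uniform polynomial rate.

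First I would write the $k$-th coordinate of $\vec{Y}_n$ as a polynomial in $n$ of degree $k$,
\[
(\vec{Y}_n)_k = y_k + \binom{n}{1} y_{k-1} + \cdots + \binom{n}{k-1} y_1 + \binom{n}{k}\alpha,
\]
so that for any $\vec{h}\in\Z^d\setminus\{\vec 0\}$ with $k^*=k^*(\vec h):=\max\{k:h_k\neq 0\}$,
\[
\langle \vec h,\vec Y_n\rangle \;=\; \frac{h_{k^*}\alpha}{k^*!}\,n^{k^*} \;+\; Q_{\vec h,\vec y}(n),
\]
where $Q_{\vec h,\vec y}$ is a polynomial in $n$ of degree strictly less than $k^*$ whose coefficients depend on $\vec y$ and $\alpha$. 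Applying Lemma \ref{ETK} with a parameter $H_0$ to be chosen, it suffices to bound
$
S_N(\vec h):=\sum_{n=1}^N e^{2\pi i \langle \vec h,\vec Y_n\rangle}
$
uniformly for $0<|\vec h|\le H_0$.

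Next I would apply Van der Corput's inequality $k^*-1$ times. Each application with parameter $H_j$ replaces the polynomial phase $P(n)$ in the bound by the difference $P(n+k_j)-P(n)$, which lowers the degree by one and, crucially, kills the contribution of the initial condition $\vec y$ appearing only in the lower-degree terms. After $k^*-1$ iterations one obtains an upper bound of the schematic form
\[
\Bigl|\tfrac{1}{N}S_N(\vec h)\Bigr|^{2^{k^*-1}} \;\ll_{d}\; \frac{1}{H_1\cdots H_{k^*-1}}\sum_{1\le k_1\le H_1}\cdots \sum_{1\le k_{k^*-1}\le H_{k^*-1}} \min\!\Bigl(1,\,\frac{1}{N\,\|k_1\cdots k_{k^*-1}\,h_{k^*}\,\alpha\|}\Bigr) \;+\; \text{trivial terms}.
\]
The polynomial in $n$ is now linear with slope $k_1\cdots k_{k^*-1}\,h_{k^*}\,\alpha$ (up to an integer, and modulo a harmless constant factor from the factorials), and the trivial terms come from the diagonals $k_j=0$ and are of size $O\bigl(\sum_j H_j^{-1}\bigr)$.

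The Diophantine condition $\alpha\in DC(c,\tau)$ now gives
\[
\|k_1\cdots k_{k^*-1}\,h_{k^*}\,\alpha\|_{\T}\;\ge\; \frac{c}{(k_1\cdots k_{k^*-1} |h_{k^*}|)^{\tau}}
\]
whenever the integer $k_1\cdots k_{k^*-1} h_{k^*}$ is nonzero. Substituting this into the minimum and summing geometrically in each $k_j$ produces a bound of the shape $(H_1\cdots H_{k^*-1})^{C\tau}\,(H_0)^{C\tau}/N$ plus the trivial terms, where $C=C(d)$. Finally I would balance the parameters $H_0,H_1,\ldots,H_{d-1}$ as small positive powers of $N$, chosen so that both the tail $1/H_0$ from Erd\H{o}s-Tur\'an-Koksma, the trivial terms $\sum_j H_j^{-1}$, and the main term $(H_0 H_1\cdots H_{d-1})^{C\tau}/N$ are all bounded by $N^{-\delta}$ for some $\delta=\delta(d,\tau)>0$. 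Taking the $2^{k^*-1}$-th root and summing the Erd\H{o}s-Tur\'an-Koksma estimate over $\vec h$ (the weight $1/r(\vec h)$ keeps the $\vec h$-sum logarithmic) yields the desired uniform bound $D_N\le N^{-\delta}$.

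The main obstacle is bookkeeping: choosing the exponents for $H_0,\ldots,H_{d-1}$ that work for every $k^*\le d$ simultaneously, and making sure that the exponent of $|h_{k^*}|$ produced by the Diophantine bound is still compensated by the $1/r(\vec h)$ factor in Erd\H{o}s-Tur\'an-Koksma after the $2^{k^*-1}$-th root is taken. This is purely an optimization calculation; the geometric/analytic content is concentrated in the Weyl differencing and the Diophantine estimate, both of which are insensitive to the initial point $(y_1,\ldots,y_d)$, giving the required uniformity.
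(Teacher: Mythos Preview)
Your proposal is correct and follows essentially the same route as the paper: Erd\H{o}s--Tur\'an--Koksma reduces to Weyl sums, repeated Van der Corput differencing reduces the polynomial phase to a linear one with slope $h_{k^*}\alpha\,k_1\cdots k_{k^*-1}$, and the Diophantine condition controls the resulting geometric sum; the parameters $H_j$ are then balanced as powers of $N$. The paper carries out the same computation, phrasing the differencing via binomial coefficients and two combinatorial identities (its Lemma in \S5.1) rather than via the monomial leading term, and with the explicit choice $H_j\sim N^{2^j/((2^d-1)(\tau+\epsilon))}$. One minor point in your favor: by differencing $k^*-1$ times according to the largest nonzero index, you handle the case $h_d=0$ explicitly, which the paper's write-up leaves implicit.
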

\begin{lemma}\label{skewwbdd}
If $\alpha\notin DC(d)$, then for some constant $\delta>0$ there exists a sequence $\{N_j\}$ so that $D_{N_j}(\{\vec{Y}_n\}_{n=1}^{N_j})\leq N_j^{-\delta}$ uniformly in $(y_1,...,y_d)\in \T^d$.
\end{lemma}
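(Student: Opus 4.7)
I will combine the Erd\"os--Tur\'an--Koksma inequality (Lemma \ref{ETK}) with iterated Van der Corput differencing (Lemma \ref{VDC}), mirroring the Diophantine skew-shift argument for Lemma \ref{skewsbdd}, but now calibrating the length $N_j$ against the Liouville ``gaps'' of $\alpha$ guaranteed by (\ref{NonDio}).

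The first step is to notice that, for any non-zero $\vec h=(h_1,\ldots,h_d)\in\Z^d$, $\langle \vec h,\vec Y_n\rangle$ is a polynomial $P_{\vec h}(n)$ in $n$ of degree $m:=\max\{i:h_i\neq 0\}\leq d$, with leading coefficient $h_m\alpha/m!$ and with $\vec y$ entering only through the lower-order coefficients. Hence a cutoff $H_0$ in Lemma \ref{ETK} reduces the claim to a uniform (in $\vec y$) bound on the exponential sums $S_N(\vec h)=\sum_{n=1}^N e^{2\pi i P_{\vec h}(n)}$ for $0<|\vec h|\leq H_0$. For each such $\vec h$ with $m\geq 2$ I apply Lemma \ref{VDC} exactly $m-1$ times with a common cutoff $H$. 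Each iteration lowers the degree of the phase by one while introducing a new differencing parameter $k_s\in\{1,\ldots,H\}$; after $m-1$ rounds the phase is linear in $n$ with coefficient $m!\,h_m\,k_1\cdots k_{m-1}\,\alpha$ (modulo terms independent of $n$), and the resulting geometric sum is bounded by $\min(N,\|m!\,h_m\,k_1\cdots k_{m-1}\,\alpha\|_\T^{-1})$. Collecting the iterations yields, schematically,
$$\left|\frac{S_N(\vec h)}{N}\right|^{2^{m-1}}\lesssim \frac{1}{H^{m-1}}+\frac{1}{N\,H^{m-1}}\sum_{k_1,\ldots,k_{m-1}=1}^{H} \min\!\left(N,\,\|m!\,h_m\,k_1\cdots k_{m-1}\,\alpha\|_\T^{-1}\right).$$

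The Liouville hypothesis enters as follows. Since $\alpha\notin DC(d)$, (\ref{NonDio}) furnishes a subsequence $\{q_{n_k}\}$ of continued-fraction denominators of $\alpha$ with $q_{n_k+1}>q_{n_k}^d$. I will set $N_j:=q_{n_k}^2$ (indexed by $k$) and $H_0=H:=N_j^{1/(4d)}$. Then $d!\,H_0\,H^{d-1}\leq d!\,N_j^{d/(4d)}=d!\,q_{n_k}^{1/2}<q_{n_k}$ for $k$ large, so every positive integer of the form $m!\,h_m\,k_1\cdots k_{m-1}$ occurring in the Weyl bound is strictly less than $q_{n_k}$. By (\ref{qnqn+1}) and (\ref{kqn}) we therefore get $\|m!\,h_m\,k_1\cdots k_{m-1}\,\alpha\|_\T\geq \|q_{n_k-1}\alpha\|_\T\geq (2q_{n_k})^{-1}$, so each $\min$ in the bound above is at most $2q_{n_k}$. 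Thus $|S_N(\vec h)/N|^{2^{m-1}}\lesssim H^{1-m}+q_{n_k}/N_j$ for every admissible $\vec h$; extracting the $2^{m-1}$-th root (worst when $m=d$) and summing against $r(\vec h)^{-1}=O(\log^d H_0)$ returns, via Lemma \ref{ETK},
$$D_{N_j}\lesssim N_j^{-1/(4d)}+(\log N_j)^d\left(N_j^{-(d-1)/(4d)}+N_j^{-1/2}\right)^{2^{1-d}}\leq N_j^{-\delta}$$
for some explicit $\delta=\delta(d)>0$, uniformly in $\vec y\in\T^d$, which is what is claimed.

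\textbf{Main obstacle.} The delicate point is the simultaneous calibration of the exponents: the constraint $d!\,H_0\,H^{d-1}<q_{n_k}$ that places every integer product safely inside the range $[1,q_{n_k})$ where $\alpha$ is ``effectively Diophantine'' competes with the requirements that $H_0^{-1}$, the Weyl main term $H^{1-m}$, and the ratio $q_{n_k}/N_j$ each decay like a positive power of $N_j$. The hypothesis $\alpha\notin DC(d)$, i.e.\ $q_{n_k+1}>q_{n_k}^d$, provides exactly the polynomial amount of room needed to reconcile these four constraints uniformly in $d$ and $\vec y$, which is why the threshold $d$ in the statement is the natural one for this method.
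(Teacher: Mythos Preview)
Your argument is correct, and it follows the same global architecture as the paper (Erd\"os--Tur\'an--Koksma plus $m-1$ rounds of Van der Corput), but the implementation at the crucial final step is genuinely different. The paper takes $N_j=q_{n_k}$, \emph{replaces $\alpha$ by the convergent $p/q$} in the exponential sums (see (\ref{difference})), and controls the replacement error by $H_0/q$ using $|\alpha-p/q|<1/(q\,q_{n_k+1})<q^{-(d+1)}$; this is the only place where the Liouville gap $q_{n_k+1}>q_{n_k}^d$ is invoked. After the substitution, the bound $\|h_d\tfrac{p}{q}\prod k_l\|_\T\ge 1/q$ is immediate from $\prod H_l<q$. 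You instead keep $\alpha$, take $N_j=q_{n_k}^2$, and appeal directly to (\ref{qnqn+1})--(\ref{kqn}) to get $\|\ell\alpha\|_\T\ge (2q_{n_k})^{-1}$ for every $0<\ell<q_{n_k}$; the choice $N_j=q^2$ is what makes $q_{n_k}/N_j$ small. Your route is shorter: no rational substitution, no separate error term, and no combinatorial identities beyond the standard Weyl differencing.

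One observation about your ``main obstacle'' paragraph: with your parameter choices $N_j=q_{n_k}^2$ and $H_0=H=N_j^{1/(4d)}$, none of the four constraints you list actually uses $q_{n_k+1}>q_{n_k}^d$. The constraint $d!\,H^d<q_{n_k}$, the decay of $H_0^{-1}$ and $H^{1-m}$, and the smallness of $q_{n_k}/N_j=q_{n_k}^{-1}$ all hold for \emph{every} continued-fraction denominator $q_n$, not just those along the Liouville subsequence. So your proof in fact establishes weakly $\delta$-bounded discrepancy for the skew-shift for \emph{all} irrational $\alpha$ (along $N_j=q_n^2$), which is stronger than the stated lemma. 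The Liouville hypothesis is essential for the paper's rational-approximation device, but your direct approach bypasses it. (A minor point: the linear coefficient after differencing is $h_m\alpha\prod k_l$, not $m!\,h_m\alpha\prod k_l$, as one sees from (\ref{scom}) or from the leading coefficient $h_m\alpha/m!$ of $P_{\vec h}$; the extra $m!$ in your write-up is harmless since your constraint already absorbs a $d!$.)
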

\begin{rem}
Lemma \ref{toralsbdd} is standard. It's proof will be given in the appendix. The proofs of Lemma \ref{skewsbdd} and \ref{skewwbdd} will be given in section 6.
\end{rem}
\subsection*{Proof of Theorem \ref{xitoral}, \ref{betatoral}}
Follows from Lemma \ref{toralsbdd} and Theorems \ref{xi}, \ref{beta}. $\hfill{} \Box$
\subsection*{Proof of Theorem \ref{xiskew}, \ref{betaskew}}
Follows from Lemmas \ref{skewsbdd}, \ref{skewwbdd} and Theorems \ref{xi}, \ref{beta}. $\hfill{} \Box$

\section{Proofs of Lemmas \ref{stepxi} and \ref{stepbeta}}

\subsection{Upper and lower bounds on transfer matrices}
The following lemma on the uniform upper bound of transfer matrix is essentially from \cite{JM1}. We have adapted it into the following form for convenience.
\begin{lemma}\label{uniupp}$\mathrm{(}$\cite{JM1}, Theorem 3.1$\mathrm{)}$
Let $\phi$ be a function whose discontinuity set has measure $0$ and $f$ be a uniquely ergodic map on $\mathcal{M}$. Then
\begin{sublemma}\label{uniuppxi}
Let $L(E)$ be positive on a Borel set $U$ and $\mu$ be a measure with $\mu(U)>0$. Then for any $\zeta>0$ there exists a number $D_{\zeta}>0$, and for any $\epsilon>0$ there exists a set $B_{\zeta, \epsilon}$ with $0<\mu(B_{\zeta,\epsilon})<\zeta$,  and an integer $N_{\zeta, \epsilon}$ so that for any $E\in U\setminus B_{\zeta,\epsilon}$:
\begin{enumerate}
\item $L(E)\geq D_{\zeta}$,
\item for $n>N_{\zeta,\epsilon}$, $|z-E|<e^{-4\epsilon n}$ and $\theta\in \mathcal{M}$, we have $\frac{1}{n}\ln{\|A_{n}(\theta, z)\|}<L(E)+\epsilon$.
\end{enumerate}
\end{sublemma}
\begin{sublemma}\label{uniuppbeta}
Furthermore, if $L(E)$ is continuous in $E$ and $U$ is a compact set, there exists $D>0$ and for any $\epsilon>0$ there exists an integer $N_{\epsilon}$ so that for any $E\in U$:
\begin{enumerate}
\item $L(E)\geq D$
\item for $n>N_{\epsilon}$, $|z-E|<e^{-4\epsilon n}$ and $\theta \in \mathcal{M}$, we have $\frac{1}{n}\ln{\|A_n(\theta, z)\|}<L(E)+\epsilon$.
\end{enumerate}
\end{sublemma}
\end{lemma}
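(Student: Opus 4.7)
The plan is to combine the infimum formula for $L$ in (\ref{lyap}) with a uniform ergodic theorem for almost-everywhere continuous observables, then perturb from $E$ to $z$. Write $n=qk+r$ with $0\le r<k$. Since every transfer matrix has operator norm bounded by $|z|+\|\phi\|_\infty+2$, submultiplicativity gives
\begin{equation*}
\tfrac{1}{n}\log\|A_n(\theta,z)\|\;\le\;\tfrac{1}{n}\sum_{j=0}^{q-1}\log\|A_k(f^{jk}\theta,z)\|+O(k/n),
\end{equation*}
with the tail term under control whenever $n\gg k$. Moreover, since each $A_k\in\mathrm{SL}(2,\R)$, one has $\log\|A_k\|\ge 0$, which keeps the sums nonnegative and enables clean one-sided bounds.

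Set $\Phi_k(\theta,z):=\tfrac{1}{k}\log\|A_k(\theta,z)\|$. Its discontinuity set in $\theta$ lies in $\bigcup_{j=0}^{k-1}f^{-j}(J_\phi)$, which is $\mathrm{Vol}_g$-null since $f$ preserves $\mathrm{Vol}_g$ and $J_\phi$ is null by (\ref{d-1measurepartial}). The key analytic input is a standard strengthening of unique ergodicity: for any bounded function $g$ on $\mathcal M$ whose discontinuity set has $\mathrm{Vol}_g$-measure zero, the Birkhoff averages $\tfrac{1}{q}\sum_{j=0}^{q-1}g(f^{jk}\theta)$ converge to $\int g\,\mathrm d\mathrm{Vol}_g$ uniformly in $\theta$; the proof sandwiches $g$ between continuous $\underline g\le g\le\overline g$ with $\int(\overline g-\underline g)\,\mathrm d\mathrm{Vol}_g<\eta$ and applies unique ergodicity in the continuous case. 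Applied to $g=\Phi_k(\cdot,E)$, this yields $N_k(E)$ so that for $n>N_k(E)$ and all $\theta$,
\begin{equation*}
\tfrac{1}{n}\sum_{j=0}^{q-1}\log\|A_k(f^{jk}\theta,E)\|\;\le\;\int\Phi_k(\cdot,E)\,\mathrm d\mathrm{Vol}_g+\tfrac{\epsilon}{4}\;\le\;L(E)+\tfrac{\epsilon}{2},
\end{equation*}
the last inequality invoking the infimum characterization of $L(E)$ to pick $k=k(E)$ large enough.

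To pass from $E$ to $z$, telescope the difference $A_k(\theta,z)-A_k(\theta,E)$ to obtain $\|A_k(\theta,z)-A_k(\theta,E)\|\le kC^{k-1}|z-E|$ with $C$ bounding the single-step transfer matrix near $E$. Since $\|A_k(\theta,E)\|\ge 1$, this gives $\log\|A_k(\theta,z)\|\le\log\|A_k(\theta,E)\|+kC^{k-1}|z-E|$, contributing at most $C^k|z-E|$ to the averaged bound. The hypothesis $|z-E|<e^{-4\epsilon n}$ with $k$ fixed drives this error below $\epsilon/4$ for $n$ large, completing $\tfrac{1}{n}\log\|A_n(\theta,z)\|<L(E)+\epsilon$ at each single $E$.

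It remains to make the choice $(k,N_\epsilon)$ uniform in $E$. For Sub-lemma \ref{uniuppbeta} (continuous $L$, compact $U$), the map $E\mapsto\int\Phi_k(\cdot,E)\,\mathrm d\mathrm{Vol}_g$ is continuous for each fixed $k$ (dominated convergence), so an open cover of $U$ by neighborhoods on which $(k,N)$ may be held constant admits a finite subcover, yielding the required uniform $D>0$ and $N_\epsilon$. For Sub-lemma \ref{uniuppxi} (only positivity on Borel $U$), I would invoke Egorov: the pointwise convergence $\int\Phi_k(\cdot,E)\,\mathrm d\mathrm{Vol}_g\downarrow L(E)$, combined with $L\ge D_\zeta$ off a $\mu$-set of measure $<\zeta/2$ for some $D_\zeta>0$, upgrades to uniform convergence off a further $\mu$-set of measure $<\zeta/2$, whose union serves as $B_{\zeta,\epsilon}$. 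The main obstacle is the four-way balance between $k$ (large so $\int\Phi_k\approx L$), $n\gg k$ (for the uniform ergodic theorem), the tail $O(k/n)$, and the $z$-perturbation of $A_k$; the factor $4$ in $e^{-4\epsilon n}$ is there precisely to accommodate these four $\epsilon/4$ budgets.
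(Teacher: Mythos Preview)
The paper does not prove this lemma; it is quoted from \cite{JM1} without argument. Your outline is the standard one (subadditivity plus a uniform ergodic theorem, then Egorov or compactness to get uniformity in $E$), and it is essentially correct, but there is one genuine gap.

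You write that for a bounded, almost-everywhere continuous $g$, the averages $\tfrac{1}{q}\sum_{j=0}^{q-1}g(f^{jk}\theta)$ converge uniformly to $\int g\,\mathrm d\mathrm{Vol}_g$. This is the uniform ergodic theorem applied to $f^k$, not to $f$, and unique ergodicity of $f$ does not in general pass to $f^k$. (Already on the disconnected manifold $\{0,1\}\times\mathbb T$ one can build a uniquely ergodic $f$ interchanging the components whose square is rotation on each component separately, hence has two ergodic measures; the lemma as stated does not assume connectedness.) Your sandwiching argument justifies the $f$-ergodic theorem for a.e.-continuous observables, but says nothing about $f^k$.

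The standard repair is to average your block decomposition over shifts. For each $0\le i<k$ write
\[
\log\|A_n(\theta,z)\|\le \log\|A_i(\theta,z)\|+\sum_{j\ge 0:\,i+jk\le n-k}\log\|A_k(f^{i+jk}\theta,z)\|+O(k\log C),
\]
sum over $i=0,\dots,k-1$, and observe that the double sum runs over all consecutive integers $m\in[0,n-k]$ exactly once. Dividing by $kn$ yields
\[
\tfrac{1}{n}\log\|A_n(\theta,z)\|\le \tfrac{1}{n}\sum_{m=0}^{n-1}\Phi_k(f^m\theta,z)+O(k/n),
\]
which is a genuine Birkhoff average along the $f$-orbit and to which the uniform ergodic theorem for $f$ applies. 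With this change, the rest of your argument goes through: the $z$-perturbation at scale $k$ is harmless since $k$ is fixed and $|z-E|<e^{-4\epsilon n}$; the continuity/compactness argument for \ref{uniuppbeta} works once you also note that $E\mapsto\Phi_k(\theta,E)$ is continuous uniformly in $\theta$ (so a finite $E$-net suffices to make the ergodic-theorem threshold $N$ uniform in $E$); and for \ref{uniuppxi}, Fekete's lemma gives $\int\Phi_k(\cdot,E)\to L(E)$ for every $E$, so Egorov produces the exceptional set $B_{\zeta,\epsilon}$.
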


We are also able to formulate the following lower bound for the norm of transfer matrices.
\begin{lemma}\label{lowerbdd}
Let $\phi$ be a piecewise H\"older function with $1\geq \gamma>0$ and $f$ be a uniquely ergodic volume preserving map on $\mathcal{M}$ with $V(f)=V(f^{-1})=0$. 
Then
\begin{sublemma}\label{lowerbddxi}
Let $L(E)$ be positive on a Borel set $U$ and $\mu$ be a measure with $\mu(U)>0$. Then for any $\zeta, \epsilon>0$, let $D_{\zeta}$, $B_{\zeta, \epsilon}$ and $N_{\zeta, \epsilon}$ be defined as in $\ref{uniuppxi}$.
\begin{enumerate}
\item If there exists a sequence $r_k\rightarrow 0$ so that any geodesic ball in $\mathcal{M}$ with radius $r_k$ covers the whole $\mathcal{M}$ in $r_k^{-M}$ steps, then there exists a sequence $\{n_k(\epsilon)\}$ such that for $k>k_{\zeta,\epsilon}$, any $E\in U\setminus B_{\zeta, \epsilon}$, $|z-E|<e^{-4\epsilon n_k}$ and  $\theta\in \mathcal{M}$ we have
\begin{align*}
\min_{\iota\in\{-1,1\}} \max_{\iota j=0,..., e^{\frac{5M \epsilon}{\gamma}n_k}}\|A_{n_k}(f^j \theta, z)\|\geq e^{n_k(L(E)-3\epsilon)}.
\end{align*} 
\item If for any small $r>0$, any geodesic ball with radius $r$ covers the whole $\mathcal{M}$ in $r^{-M}$ steps, then for $n>N^\prime_{\zeta, \epsilon}$, any $E\in U\setminus B_{\zeta, \epsilon}$, $|z-E|<e^{-4\epsilon n}$ and  $\theta\in \mathcal{M}$ we have
\begin{align*}
\min_{\iota\in\{-1,1\}}  \max_{\iota j=0,..., e^{\frac{5M \epsilon}{\gamma}n}}\|A_n(f^j \theta, z)\|\geq e^{n(L(E)-3\epsilon)}.
\end{align*}
\end{enumerate}
\end{sublemma}
\begin{sublemma}\label{lowerbddbeta}
Furthermore, if $L(E)$ is continuous in $E$ and $U$ is a compact set, let $D$ be defined as in $\ref{uniuppbeta}$ and for any $\epsilon>0$ let $N_{\epsilon}$ be defined as in $\ref{uniuppbeta}$. Then for any $E\in U$ we have $L(E)\geq D$ and for any $|z-E|<e^{-4\epsilon n}$ we have
\begin{enumerate}
\item if there exists a sequence $r_k\rightarrow 0$ so that any geodesic ball in $\mathcal{M}$ with radius $r_k$ covers the whole $\mathcal{M}$ in $r_k^{-M}$ steps, then there exists a sequence $\{n_k(\epsilon)\}$ such that for $k>k_\epsilon$ and any $\theta\in \mathcal{M}$,
\begin{align*}
\min_{\iota\in\{-1,1\}}  \max_{\iota j=0,..., e^{\frac{5M \epsilon}{\gamma}n_k}}\|A_{n_k}(f^j \theta, z)\|\geq e^{n_k(L(E)-3\epsilon)}.
\end{align*} 
\item if for any small $r>0$, any geodesic ball with radius $r$ covers the whole $\mathcal{M}$ in $r^{-M}$ steps, then for $n>N^\prime_\epsilon$ and any $\theta\in \mathcal{M}$,
\begin{align*}
\min_{\iota\in\{-1,1\}}  \max_{\iota j=0,..., e^{\frac{5M \epsilon}{\gamma}n}}\|A_n(f^j \theta, z)\|\geq e^{n(L(E)-3\epsilon)}.
\end{align*}
\end{enumerate}
\end{sublemma}
\end{lemma}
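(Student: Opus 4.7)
The plan is to propagate a Lyapunov-typical lower bound from a single well-chosen point $\theta_0\in\mathcal{M}$ to every shifted phase $f^j\theta$ of interest, using the covering hypothesis together with the subexponential distortion guaranteed by $V_1(f^{\pm 1})=0$. Both sub-lemmas \ref{lowerbddxi} and \ref{lowerbddbeta} follow the same template; the only difference is whether the uniform cocycle upper bound is supplied by sub-lemma \ref{uniuppxi} (valid on $E\in U\setminus B_{\zeta,\epsilon}$) or by sub-lemma \ref{uniuppbeta} (valid on $E\in U$, using continuity of $L$ and compactness of $U$). I describe case (2) of \ref{lowerbddxi}; case (1) is identical but run along the subsequence $n_k$ defined by $r_k=e^{-5\epsilon n_k/\gamma}$.

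Step 1 is to pair the definition \eqref{lyap} of $L(E)$ with the uniform upper bound from Lemma \ref{uniupp} and conclude, via an elementary averaging argument, that the set $G_n=\{\theta_0:\|A_n(\theta_0,E)\|\geq e^{n(L(E)-\epsilon)}\}$ has $\mathrm{Vol}_g(G_n)\geq \kappa(\epsilon)>0$ for all large $n$. Step 2 uses \eqref{d-1measurepartial} and volume preservation of $f$ to bound the measure of phases whose $f^{\pm k}$-orbit for $|k|\leq n$ enters the $\eta$-neighborhood of the discontinuity set $J_\phi$ by $Cn\eta$; with $\eta=e^{-4\epsilon n/\gamma}$ this is $o(1)$, so a point $\theta_0\in G_n$ whose entire $n$-orbit stays $\eta$-far from $J_\phi$ exists for large $n$. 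In Step 3 I set $r=e^{-5\epsilon n/\gamma}$, so the covering hypothesis supplies, for each $\theta$ and each $\iota\in\{-1,1\}$, an index $j$ with $0\leq\iota j\leq r^{-M}=e^{5M\epsilon n/\gamma}$ and $\mathrm{dist}(f^j\theta,\theta_0)<r$ (the backward case uses the analogous covering for $f^{-1}$ available through $V(f^{-1})=0$). Since $V_1(f^{\pm 1})=0$ yields subexponential stretching, $\mathrm{dist}(f^{j+k}\theta,f^k\theta_0)$ remains smaller than the $\eta$-gap for $0\leq k\leq n-1$; hence $f^{j+k}\theta$ and $f^k\theta_0$ lie in a common piece of continuity of $\phi$, and \eqref{differencenorm} delivers a pointwise potential estimate of order $e^{-4\epsilon n}$.

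The final step is a telescoping identity together with the uniform upper bound, which turns the pointwise potential estimate into $\|A_n(f^j\theta,z)-A_n(\theta_0,z)\|\ll e^{n(L(E)-\epsilon)}$; a parallel telescoping in the spectral parameter, using $|z-E|<e^{-4\epsilon n}$, transfers the lower bound on $\|A_n(\theta_0,E)\|$ to $\|A_n(\theta_0,z)\|$, and the triangle inequality then yields $\|A_n(f^j\theta,z)\|\geq e^{n(L(E)-3\epsilon)}$. The main technical obstacle I anticipate is the simultaneous coordination of three scales—the covering radius $r$, the orbital safety distance $\eta$ from $J_\phi$, and the spectral perturbation $|z-E|$—so that (a) $G_n$ still meets the orbit-good set, (b) the subexponential distortion beats the $\eta$-gap, and (c) the resulting cocycle perturbation stays subleading to $e^{n(L(E)-\epsilon)}$. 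The hypothesis $V(f^{\pm 1})=0$ is precisely what makes this three-scale balance achievable for every $\epsilon>0$, and it is the only nontrivial dynamical input beyond Lemma \ref{uniupp} used in the comparison.
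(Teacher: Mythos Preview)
Your proposal is correct and follows essentially the same route as the paper: obtain a set of positive volume where $\|A_n(\cdot,E)\|\ge e^{n(L(E)-\epsilon)}$ via the uniform upper bound and averaging, excise a thin neighborhood of the orbit-discontinuity set, telescope in the spectral parameter, then use $V_1(f)=0$ to show the good set contains an $e^{-5\epsilon n/\gamma}$-ball, and finally invoke the covering hypothesis. One small difference: in Step~2 you bound the bad set using volume preservation ($\mathrm{Vol}_g(\cup_{k}f^{-k}F_\eta(J_\phi))\le n\,\mathrm{Vol}_g(F_\eta(J_\phi))\lesssim n\eta$), whereas the paper instead bounds $\mathrm{Vol}_{g,d-1}(J_n)$ via $V_{d-1}(f^{-1})=0$; your version is slightly cleaner and uses a weaker hypothesis. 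Also, your aside that ``the backward case uses the analogous covering for $f^{-1}$ available through $V(f^{-1})=0$'' is imprecise---the backward covering is part of the covering hypothesis, not a consequence of $V(f^{-1})=0$; the paper leaves this implicit as well.
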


\subsection*{Proof of Lemma \ref{lowerbdd}}
We will focus on the proof of part (1) of $\ref{lowerbddxi}$. The other three proofs will be discussed briefly at the end of this section.

For any $E\in U\setminus B_{\zeta, \epsilon}$ and $n>N_{\zeta,\epsilon}$, by Lemma $\ref{uniuppxi}$ we have $\frac{1}{n}\|A_n(\theta, E)\|<L(E)+\epsilon$. 
Since $\int_{\mathcal{M}} \frac{1}{n}\ln{\|A_n(\theta, E)\|}\ \mathrm{d}\mathrm{Vol}_g(\theta) \geq L(E)$, we have
\begin{equation} 
\mathrm{Vol}_g(M_{n,E,L(E),\epsilon}):=\mathrm{Vol}_g(\{\theta\in \mathcal{M} : \frac{1}{n}\ln{\|A_n(\theta, E)\|}> L(E)-\epsilon\})>\frac{1}{2}.
\end{equation}
Now we take any $\theta\in M_{n,E,L(E),\epsilon}$ and $|z-E|<e^{-4 \epsilon n}$. When $n>2N_{\zeta, \epsilon}+3$, by the standard telescoping we have,
\begin{align*}
\|A_n(\theta, z)\| 
& \geq \|A_n(\theta, E)\|-\|A_n(\theta,z)-A_n(\theta,E)\|\\
& \geq e^{n(L(E)-\epsilon)} - (n+2(N_{\zeta, \epsilon}+1)\|A\|_{\infty}^{N_{\zeta, \epsilon}})e^{n(L(E)-3\epsilon)}\\
& >e^{n(L(E)-2\epsilon)}
\end{align*}
for large enough $n>N^\prime_{\zeta, \epsilon}$. This means
\begin{equation}
M_{n,E,L(E),\epsilon}\subset M_{n,z,L(E),2\epsilon}.
\end{equation}
We know the discontinuity set of $\frac{1}{n}\ln{\|A_n(\theta,z)\|}$ is $J_n=\cup_{l=0}^{n-1}f^{-l}(J_{\phi})$, where $J_{\phi}=\cup_{j=1}^K \partial S_j$ is defined in section \ref{piecewiseholderf}.
By our assumption (\ref{d-1measurepartial}) and the fact the $V_{d-1}(f^{-1})=0$ (by the definition (\ref{scale}) of $V(f^{-1})$).
For $n$ large enough, we have 
\begin{align}\label{mJn}
\mathrm{Vol}_{g, d-1}(J_n)\leq e^{n\epsilon} \mathrm{Vol}_{g, d-1}(J_{\phi}),
\end{align}
note that the largeness depends only on $f$.
Define 
\begin{align*}
\tilde{M}_{n,z,L(E),2\epsilon}=M_{n,z,L(E),2\epsilon}\setminus \overline{F_{2e^{-5\epsilon n/{\gamma}}}(J_n)},
\end{align*} 
where a neighborhood is defined as
\begin{align*}
F_r(A)=\{\theta \in \mathcal{M}: dist(\theta, A)<r\}.
\end{align*}
Then by (\ref{mJn}),
\begin{align*}
\mathrm{Vol}_{g}(\tilde{M}_{n,z,L(E),2\epsilon})&\geq \mathrm{Vol}_g(M_{n,z,L(E),2\epsilon})-4 e^{-5\epsilon n/\gamma}\mathrm{Vol}_{g,d-1}(J_n)\\
&\geq \mathrm{Vol}_g(M_{n,z,L(E),2\epsilon})-4 e^{-n(\frac{5\epsilon}{\gamma}-\epsilon)} \mathrm{Vol}_{g, d-1}(J_{\phi})>\frac{2}{5}.
\end{align*}
In particular, it is a non-empty set. Now we take any $\tilde{\theta} \in \tilde{M}_{n,z,L(E),2\epsilon}$ and $\theta\in B_{e^{-5\epsilon n/\gamma}}(\tilde{\theta})$. 
We have, by telescoping, (\ref{differencenorm}) and the fact that $V_1(f)=0$ (by the definition (\ref{scale}) of $V(f)$),
\begin{align*}
\|A_n(\theta, z)\|
&\geq \|A_n(\tilde{\theta},z)\|-\|A_n(\theta,z)-A_n(\tilde{\theta},z)\|\\
&\geq e^{n(L(E)-2\epsilon)}-(\sum_{l=1}^K\|\phi_l\|_{L^\gamma})(n+2(N_{\zeta, \epsilon}+1)\|A\|_{\infty}^{N_{\zeta, \epsilon}})e^{n(L(E)+\epsilon)}\max_{j=0,..., n-1}(dist(f^j\theta, f^j\tilde{\theta}))^{\gamma} \\
&\geq e^{n(L(E)-2\epsilon)}-(\sum_{l=1}^K\|\phi_l\|_{L^\gamma})(dist(\theta, \tilde{\theta}))^{\gamma} (n+2(N_{\zeta, \epsilon}+1)\|A\|_{\infty}^{N_{\zeta, \epsilon}})e^{n(L(E)+\epsilon+\gamma\epsilon)}\\
&>e^{n(L(E)-3\epsilon)}.
\end{align*}
for $n>N^{\prime \prime}_{\zeta, \epsilon}$. 
This means 
\begin{align*}
F_{e^{-5\epsilon n/{\gamma}}}(\tilde{M}_{n,z,L(E), 2\epsilon})\subset M_{n,z,L(E), 3\epsilon}.
\end{align*}
Hence for $E\in U\setminus B_{\zeta,\epsilon}$, $n>N^{\prime \prime}_{\zeta, \epsilon}$ and $|z-E|<e^{-4\epsilon n}$, $M_{n,z,L(E),3\epsilon}$ contains a geodesic ball with radius $e^{-\frac{5\epsilon}{\gamma}n}$.
Then there exists a sequence $\{n_k(\epsilon)\}$ such that a geodesic ball with radius $e^{-\frac{5\epsilon}{\gamma}n_k}\sim r_k$ covers the whole $\mathcal{M}$ in at most $e^{\frac{5M \epsilon}{\gamma}n_k}$ steps. 
Thus for $E\in U\setminus B_{\zeta, \epsilon}$, $k>k_{\zeta, \epsilon}$ so that $n_k(\epsilon)>N^{\prime \prime}_{\zeta, \epsilon}$, any $|z-E|<e^{-4\epsilon n_k}$ and any $\theta\in \T^d$ we have
\begin{align*}
\min_{\iota\in\{-1,1\}}  \max_{\iota j=0,..., e^{\frac{5M \epsilon}{\gamma}n_k}} \|A_{n_k} (f^j \theta, z)\|>e^{n_k(L(E)-3\epsilon)}.
\end{align*}

\begin{rem}
Notice that part (2) of Lemma $\ref{lowerbddxi}$ follows without taking a subsequence $\{n_k(\epsilon)\}$. Also, $\ref{lowerbddbeta}$ follows without excluding the set $B_{\zeta,\epsilon}$.
\end{rem}
$\hfill{} \Box$

\subsection{Dynamical bounds on $\xi_{\theta}$}
The key to estimate $\xi_{\theta}$ is to apply the following lemma by Killip, Kiselev and Last.

Following \cite{JL1}, for $f: \Z\rightarrow H$ where $H$ is a Banach space, the truncated $l^2$ norms in the positive and negative directions are defined by
\begin{align*}
\|f\|_{L}^2=\sum_{n=1}^{\lfloor L \rfloor} |f(n)|^2+ (L-\lfloor L \rfloor) |f(\lfloor L \rfloor +1)|^2\ \mathrm{for}\ L>0\\
\|f\|_{L}^2=\sum_{n=0}^{\lfloor L \rfloor +1} |f(n)|^2+ (\lfloor L \rfloor+1-L)|f(\lfloor L \rfloor)|^2\ \mathrm{for}\ L<0
\end{align*}
The truncated $l^2$ norm in both directions is defined by
\begin{align*}
\|f\|_{L_1, L_2}^2=\sum_{n=-\lfloor L_1 \rfloor}^{\lfloor L_2 \rfloor} |f(n)|^2+ (L_1-\lfloor L_1 \rfloor) |f(-\lfloor L_1 \rfloor-1)|^2+(L_2-\lfloor L_2 \rfloor) |f(\lfloor L_2 \rfloor+1)|^2\ \mathrm{for}\ L_1,L_2\geq 1.
\end{align*}

With $A_{\bullet}(\theta,z)$ being a function on $\Z$, define $\tilde{L}^{+}_\epsilon(\theta,z)\in \R^{+}$ and $\tilde{L}^{-}_\epsilon(\theta, z)\in \R^{-}$ by requiring
\begin{align*}
\|A_{\bullet}(\theta,z)\|_{\tilde{L}^{\pm}_\epsilon(\theta, z)}=2\|A(\theta,z)\|\epsilon^{-1}.
\end{align*}

\begin{lemma}\label{Last}$\mathrm{(}$\cite{KKL}, Theorem 1.5$\mathrm{)}$
Let $H_{\theta}$ be a Schr\"odinger operator and $\mu_{\theta}$ be the spectral measure of $H_{\theta}$ and $\delta_0$. Let $T>0$ and $L_1, L_2>2$, then
\begin{align}\label{Lastequ}
\langle \frac{1}{2}( \|e^{-it H_{\theta}}\delta_0\|^2_{L_1,L_2}+ \|e^{-it H_{\theta}}\delta_1\|^2_{L_1,L_2})\rangle_T>C\frac{\mu_{\theta}+\mu_{f\theta}}{2} (\{E: |\tilde{L}^{-}_{T^{-1}}|\leq L_1; \tilde{L}^{+}_{T^{-1}}\leq L_2\})
\end{align}
where $C$ is an universal constant \footnotemark. \footnotetext{Here we formulate this Lemma for operators with potential $V(n)=\phi(f^n \theta)$. This covers arbitrary bounded potentials by taking $f$ to be a corresponding subshift.}
\end{lemma}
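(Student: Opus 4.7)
The stated lemma is quoted verbatim from \cite[Theorem 1.5]{KKL}, so the proof in this paper would consist simply of a citation. For context, let me sketch the structure of the underlying KKL argument so the reader can see where the thresholds $\tilde L^{\pm}_{T^{-1}}$ come from.

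My plan would be in two steps. First, I would invoke the spectral theorem to identify $l^2(\mathbb Z)$ unitarily with $L^2(d\mu_\theta)$ via $\delta_0\mapsto 1$ and $\delta_n \mapsto P_n(\cdot,\theta)$, where $P_n(E,\theta)$ is the generalized eigenfunction at energy $E$, determined by the transfer matrix iterates $A_n(\theta,E)$ (and an analogous identification for $\delta_1$ against $\mu_{f\theta}$). Then
\begin{equation*}
\langle e^{-itH_\theta}\delta_0, \delta_n\rangle = \int e^{-itE} P_n(E,\theta)\, d\mu_\theta(E),
\end{equation*}
and the Laplace-type time average $\langle\cdot\rangle_T$ converts the oscillating phase $e^{-it(E-E')}$ into a Poisson kernel of width $T^{-1}$. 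Plancherel then expresses $\langle \|e^{-itH_\theta}\delta_0\|_{L_1,L_2}^2\rangle_T$ as a positive integral of $\|P_\cdot(E,\theta)\|_{L_1, L_2}^2$ against a $T^{-1}$-smoothed version of $d\mu_\theta$.

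Second, I would apply the Jitomirskaya--Last subordinacy estimate from \cite{JL1}, which controls the truncated norm $\|P_\cdot(E,\theta)\|_L$ from below in terms of $\|A(\theta,E)\|^{-1}$ at the appropriate length scale. The thresholds $\tilde L^{\pm}_{T^{-1}}$ are defined precisely so that on the spectral set $\{E: |\tilde L^-_{T^{-1}}| \leq L_1,\ \tilde L^+_{T^{-1}} \leq L_2\}$, this pointwise lower bound is large enough (once weighted by the $T^{-1}$ Poisson smoothing) to reproduce the spectral-mass factor on the right of (\ref{Lastequ}). Symmetrizing between $\delta_0$ and $\delta_1$ gives the average of $\mu_\theta$ and $\mu_{f\theta}$ stated in the lemma.

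The main obstacle in carrying this out from scratch would be synchronizing the $T^{-1}$-scale Poisson smoothing from Plancherel with the subordinacy threshold from \cite{JL1}, with all absolute constants correctly tracked. This balancing is precisely the technical core of \cite[Theorem 1.5]{KKL}, and so we use their theorem as a black box; the footnote remark that arbitrary bounded potentials fit the setup via a subshift is standard.
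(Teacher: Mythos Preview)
Your proposal is correct and matches the paper's approach: the paper does not prove this lemma at all but simply states it as a citation of \cite[Theorem 1.5]{KKL}, exactly as you anticipated. Your additional sketch of the KKL argument is helpful context but goes beyond what the paper itself provides.
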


This lemma directly implies $P_{\theta, T}(L)+P_{f\theta, T}(L)>C\frac{\mu_{\theta}+\mu_{f\theta}}{2}(\{E: \|A_{\bullet}(\theta,z)\|_{\pm L}>2\|A(\theta,z)\| T\})$. 
The plan is to show that for any $\eta>1$, any $\theta_0$ satisfying $(\mu_{\theta_0}+\mu_{f\theta_0})(U)>0$, we have $(\mu_{\theta_0}+\mu_{f\theta_0})(\{E:\|A_{\bullet}(\theta_0, z)\|_{\pm T}>T^{\eta}\})\gtrsim (\mu_{\theta_0}+\mu_{f\theta_0})(U)$.

\subsection*{Proof of Lemma \ref{stepxi}}
We will prove part (1) in detail. Part (2) will be discussed briefly at the end of this proof.

Fix $\eta>1$. Fix $\theta_0$ such that $(\mu_{\theta_0}+\mu_{f\theta_0})(U)>0$. 
Let $\zeta=\frac{1}{2}(\mu_{\theta_0}+\mu_{f\theta_0})(U)$, so a constant.
Let $D=D_{\zeta}$ from Lemma \ref{uniupp}.
Let $\epsilon=\min{(\frac{\gamma D}{40M \eta}, \frac{D}{6})}$. Then by Lemmas \ref{uniupp}, there exists a set $B$, $0<|B|<\frac{1}{2}(\mu_{\theta_0}+\mu_{V\theta_0})(U)$, and a sequence $\{n_k\}$, s.t. $L(E)\geq D$ on $U\setminus B$ and for $E\in U\setminus B$, $k\geq k_0$, $|z-E|<e^{-4\epsilon n_k}$ and any $\theta\in \mathcal{M}$,
\begin{align*}
\min_{\iota\in\{-1,1\}}  \max_{\iota j=0,..., e^{\frac{5M \epsilon}{\gamma}n_k}}\|A_{n_k}(f^j \theta, z)\|>e^{n_k(L(E)-3\epsilon)}.
\end{align*}
Using that $A_{s+t}(\theta, z)=A_{t}(f^{s}(\theta), z)A_{s}(\theta, z)$, this implies, by the condition on $\epsilon$,
\begin{align*}
\|A_{\bullet}(\theta, z)\|_{\pm e^{\frac{10M \epsilon}{\gamma}n_k}}>e^{\frac{n_k(L(E)-3\epsilon)}{2}}\geq e^{\frac{10M \epsilon}{\gamma}n_k\eta}.
\end{align*}
If we take $T_k=e^{\frac{10 M\epsilon}{\gamma}n_k}$, then $U\setminus B \subset \{E: \|A_{\bullet}(\theta, E)\|_{\pm T_k}>T_k^{\eta}\}$ for any $\theta$, in particular $\theta_0$. 
Then by (\ref{Lastequ}),
\begin{align*}
P_{\theta_0, T_k^\eta}(T_k)+P_{f\theta_0, T_k^\eta}(T_k)\geq C \frac{\mu_{\theta_0}+\mu_{f \theta_0}}{2}(\{E: \|A_{\bullet}(\theta_0, E)\|_{\pm T_k}>T_k^\eta\})\geq \tilde{C}\frac{\mu_{\theta_0}+\mu_{f \theta_0}}{2}(U).
\end{align*}
This implies $\underline{\xi_{\theta}}=0$ for all $\theta\in \mathcal{M}$ such that $(\mu_{\theta}+\mu_{f\theta})(U)>0$.

\begin{rem}
Using Lemmas \ref{uniuppxi} (2), \ref{lowerbddxi} (2) instead of \ref{uniuppxi} (1), \ref{lowerbddxi} (1), Part (2) can be proved without taking a subsequence $n_k$ therefore the conclusion holds for all $T$ large enough rather than a sequence $T_k$.  $\hfill{}\ \Box$
\end{rem}

\subsection{Bounds on $\beta$}
The key to the bounds on $\beta$ is to apply the following lemma by Damanik and Tcheremchansev.
\begin{lemma}\label{Damanik}$\mathrm{(}$Theorem 1 of \cite{DT1} plus Corollary 1 of \cite{DT2}$\mathrm{)}$
Let $H$ be the Schr$\ddot{o}$dinger operator, with $f$ real valued and bounded, and $K\geq 4$ such that $\sigma(H)\subset [-K+1, K-1]$. Suppose for all $\rho \in (0,1)$ we have
\begin{align}\label{Damequ}
\int_{-K}^K \left(\min_{\iota \in\{-1,1\}} \max_{1\leq \iota n\leq T^\rho}\|A_n(E+\frac{i}{T})\|^2\right)^{-1}\mathrm{d}E=O(T^{-\eta}).
\end{align}
for any $\eta\geq 1$. Then $\beta^+(p)=0$ for all $p>0$. If (\ref{Damequ}) is satisfied for a sequence $T_k\rightarrow \infty$, then $\beta^-(p)=0$ for all $p>0$.
\end{lemma}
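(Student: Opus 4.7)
The plan, following Damanik--Tcheremchansev, is to convert the assumed decay of the integral (\ref{Damequ}) into fast polynomial decay of the time-averaged tail probability $\sum_{|n| > T^{\rho}}\langle a_{\theta}(n, t)\rangle_T$ for each $\rho \in (0, 1)$. Once this is achieved, a standard splitting of the $p$-th moment into an inner contribution from $|n| \leq T^{\rho}$ (bounded by $T^{\rho p}$) and a tail contribution (with at most a $T^p$ prefactor absorbed by the fast decay) forces $\beta^+(p) \leq \rho$; sending $\rho \to 0$ gives $\beta^+(p) = 0$ for all $p > 0$.

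The bridge from the dynamics to the transfer-matrix integral proceeds in two steps. First, Parseval's identity applied to the Laplace representation $\int_0^\infty e^{-2t/T} e^{-itH}\delta_0 \, dt = (1 + iTH)^{-1}\delta_0$ yields
\begin{align*}
\langle a_{\theta}(n, t)\rangle_T = \frac{1}{\pi T}\int_{\mathbb{R}} |G(E + \tfrac{i}{T}; 0, n)|^2 \, dE,
\end{align*}
where $G(z; 0, n) = \langle \delta_n, (H - z)^{-1}\delta_0\rangle$. A Combes--Thomas estimate truncates the integral, up to exponentially small error, to $[-K, K]$. Second, a Jitomirskaya--Last type bound relates the Green's function to transfer matrices: for $n > 0$,
\begin{align*}
|G(E + \tfrac{i}{T}; 0, n)|^2 \leq \frac{C}{\max_{1 \leq k \leq |n|}\|A_k(E + \tfrac{i}{T})\|^2},
\end{align*}
obtained by writing $G$ in terms of the Weyl solutions at $\pm\infty$, bounding the Wronskian denominator from below, and controlling the Weyl solution component via subordinacy theory. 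A symmetric statement holds for $n < 0$; choosing $\iota \in \{-1, 1\}$ to maximize the transfer-matrix denominator then produces the $\min_\iota$ form of (\ref{Damequ}).

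Combining these two ingredients and summing over $|n| > T^\rho$, the tail probability is dominated by
\begin{align*}
\frac{C}{T}\int_{-K}^K \left(\min_{\iota \in \{-1, 1\}} \max_{1 \leq \iota n \leq T^\rho}\|A_n(E + \tfrac{i}{T})\|^2\right)^{-1} dE
\end{align*}
plus negligible errors. By the hypothesis this is $O(T^{-\eta})$ for every $\eta \geq 1$, yielding the required decay. The $\beta^-$ assertion follows verbatim along the sequence $\{T_k\}$ on which (\ref{Damequ}) is assumed, since $\beta^-$ is a liminf. The main technical obstacle is the Jitomirskaya--Last bound in step two: subordinacy theory in the upper half plane, with energy-uniform constants quantitatively linked to the transfer-matrix growth, is the delicate core of the argument; one must also track the polynomial prefactor $T^\rho$ arising from the tail sum, noting that arbitrariness of $\eta$ still yields arbitrarily fast decay.
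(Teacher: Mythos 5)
This lemma is \emph{cited}, not proved, in the paper: the attribution in the statement (``Theorem 1 of [DT1] plus Corollary 1 of [DT2]'') is the entirety of what the paper offers, so there is no in-paper proof against which to compare your attempt. That said, your sketch correctly reproduces the overall shape of Damanik--Tcheremchantsev's argument: Parseval turns the Abel-averaged probability $\langle a(n,t)\rangle_T$ into $\frac{1}{\pi T}\int_{\mathbb{R}}|G(E+i/T;0,n)|^2\,dE$, Combes--Thomas truncates the energy integral to $[-K,K]$, a Jitomirskaya--Last-type estimate converts transfer-matrix growth into Green's function smallness, and the arbitrariness of $\eta$ absorbs the polynomial prefactor from the position weight, forcing $\beta^+(p)\le\rho$ for each $\rho$.

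Two points where your sketch glosses over content that the citation is actually carrying. First, the Jitomirskaya--Last step in [DT1] is not a pointwise bound $|G(E+i/T;0,n)|^2\lesssim(\max_{1\le k\le|n|}\|A_k\|^2)^{-1}$ applied term by term and summed; it is a direct bound on the half-line $\ell^2$ tail $\sum_{\pm n>N}|G(E+i/T;0,n)|^2$ in terms of $(\max_{1\le k\le N}\|A_k\|^2)^{-1}$, which is why the $\frac{1}{T}$ from Parseval survives rather than being eaten by the number of terms in the sum. Your final display is dimensionally right but your intermediate pointwise estimate would, if summed naively over $T^\rho<|n|\lesssim T$, lose the factor you later rely on. Second, the paper's transport exponents $\beta^{\pm}(p)$ are defined via the instantaneous moment $\langle|X|^p(t)\rangle$, not the Abel-averaged one, whereas the Parseval identity controls only the latter. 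Passing from the time-averaged tail bound to control of $\beta^{\pm}$ is not automatic, and that is precisely why the statement cites [DT2, Cor.\ 1] in addition to [DT1, Thm.\ 1]: the refinement there handles the non-averaged exponents. Your sketch is silent on this, which is the only genuine omission; the rest is a faithful compression of the Damanik--Tcheremchantsev machinery.
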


\subsection*{Proof of Lemma \ref{stepbeta}}
We will prove part (1) in detail. A modification needed for part (2) is discussed briefly at the end of this proof.

It suffices to consider small $\rho\in (0,1)$. Fix any $\rho \in (0,1)$ small and $\eta\geq 1$. 
Aussme $\sigma(H)\subset [-K+1,K-1]$. 
Since $L(E)$ is continous in $E$ on a compact set $ [-K,K]$, we have $L(E)\geq D>0$ on $[-K, K]$.
Fix $\epsilon_{\eta}=\min{(\frac{\rho \gamma D}{20 M \eta},\frac{D}{6})}$. 
By Lemma $\ref{lowerbddbeta}$ there exists a sequence $\{n_{\eta, k}\}$ such that for any $E\in [-K,K]$, $k>k_\eta$, any $|z-E|<e^{-4\epsilon_{\eta} n_{\eta, k}}$ and any $\theta\in \mathcal{M}$,
\begin{align*}
\min_{\iota\in\{-1,1\}}  \max_{\iota j=0,..., e^{\frac{5M \epsilon_{\eta}}{\gamma}n_{\eta,k}}}\|A_{n_{\eta,k}}(f^j \theta, z)\|>e^{n_{\eta,k}(L(E)-3\epsilon_{\eta})}.
\end{align*}
Thus
\begin{align*}
\min_{\iota\in \{-1,1\}} \max_{j=0,...,e^{\frac{10M \epsilon_{\eta}}{\gamma}n_{\eta,k}}}\|A_j(\theta, z)\|^2 \geq e^{n_{\eta, k}(L(E)-3\epsilon_{\eta})}\geq e^{\frac{10M \epsilon_{\eta}}{\gamma \rho} n_{\eta,k}\eta}
\end{align*}
holds for any $\theta \in \mathcal{M}$, any $E\in [-K,K]$ and $|z-E|<e^{-4\epsilon_{\eta} n_{\eta,k}}$. 
Now we take $T_{\eta ,k}=e^{\frac{10M \epsilon_{\eta}}{\gamma \rho}n_{\eta,k}}$,
\begin{align*}
|E+\frac{i}{T_{\eta,k}}-E|=\frac{1}{T_{\eta,k}}<e^{-4\epsilon_{\eta} n_{\eta,k}}.
\end{align*}
Thus
\begin{align*}
\min_{\iota\in \{-1, 1\}} \max_{\iota j=0,...,T_{\eta,k}^\rho}\|A_j(\theta, E+\frac{i}{T_{\eta,k}})\|^2 \geq T_{\eta,k}^{\eta}
\end{align*} 
holds for any $E\in[-K,K]$.
Therefore 
\begin{align*}
\int_{-K}^K \left(\min_{\iota \in\{-1,1\}} \max_{1\leq \iota n\leq T_{\eta,k}^\rho}\|A_n(\theta,E+\frac{i}{T_{\eta,k}})\|^2\right)^{-1}\ dE \leq 2K T_{\eta,k}^{-\eta}.
\end{align*}
Now take a sequence $\{k_i\}$ such that $T_{1,k_1}<T_{2,k_2}<...$ Let $T_m=T_{m,k_m}$. Then
\begin{align*}
\int_{-K}^K \left(\min_{\iota \in \{-1,1\}} \max_{1\leq \iota n\leq T_m^{\rho}}\|A_n(\theta,E+\frac{i}{T_m})\|^2\right)^{-1}\ dE\leq 2K T_m^{-m}.
\end{align*}
By (\ref{Damequ}), we have $\beta_{\theta}^-(p)\leq \rho$ for all $\theta \in \mathcal{M}$, any $\rho\in (0,1)$ and any $p>0$, thus $\beta_{\theta}^-(p)=0$ for all $\theta \in \mathcal{M}$ and any $p>0$.
\begin{rem}
Using Lemmas \ref{uniuppbeta} (2) and \ref{lowerbddbeta} (2), part (2) follows without taking a subsequence $\{n_{\eta, k}\}$. Therefore the conclusion holds for all $T$ large rather than a sequence $T_k$. $\hfill{}\ \Box$
\end{rem}

\section{Skew-shift. Proof of Lemmas \ref{skewsbdd} and \ref{skewwbdd}}
In this section, we obtain the discrepency bounds for the skew shift. While the Diophantine case is likely known, we didn't find this in the literature. We thus present a detailed proof, especially since we build our proof for the Liouvillian case on some of the same considerations.

\subsection*{Skew-shift}
Let $f$: $\T^d\rightarrow \T^d$ be defined as follows
\begin{equation*}
f(y_1,y_2,...,y_d)=(y_1+\alpha, y_2+y_1,...,y_d+y_{d-1}).
\end{equation*}
Let $\vec{Y}_n=f^n(y_1,...,y_d)$, then
\begin{equation}\label{Yn}
\vec{Y}_n=(y_1+\binom{n}{1}\alpha,\ y_2+\binom{n}{1}y_1+\binom{n}{2} \alpha,\ ...,\ y_d+\binom{n}{1}y_{d-1}+\cdots +\binom{n}{d} \alpha),
\end{equation}
where $\binom{n}{m}=0$ if $n<m$.

\subsection{Preparation. Combinatorial identities }

\begin{lemma}
Let $r_t\in \N$ for $1\leq t\leq s$, then we have
\begin{align}
\sum_{1\leq t\leq s}^{l_t=0,1} (-1)^{s-\sum_{t=1}^s l_t} \binom{\sum_{t=1}^s l_t r_t}{s-1}&=0, \label{s-1comb}\\
\sum_{1\leq t\leq s}^{l_t=0,1} (-1)^{s-\sum_{t=1}^s l_t} \binom{\sum_{t=1}^s l_t r_t}{s}&=\prod_{t=1}^s r_t.\label{scom}
\end{align}
\end{lemma}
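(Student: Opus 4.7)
The two sums on the left are nothing but iterated forward differences evaluated at zero. Let $\Delta_r$ denote the one-variable forward difference operator $\Delta_r p(y) := p(y+r) - p(y)$. An easy induction on $s$ gives
$$\bigl(\Delta_{r_1}\Delta_{r_2}\cdots\Delta_{r_s}\, p\bigr)(y) \;=\; \sum_{l_1,\ldots,l_s\in\{0,1\}} (-1)^{s-\sum_t l_t}\, p\!\left(y + \sum_{t=1}^{s} l_t r_t\right).$$
Setting $y=0$ identifies the left-hand side of \eqref{s-1comb} with $(\Delta_{r_1}\cdots\Delta_{r_s} g_{s-1})(0)$ and the left-hand side of \eqref{scom} with $(\Delta_{r_1}\cdots\Delta_{r_s} g_{s})(0)$, where $g_k(y) := \binom{y}{k}$.

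Next I would invoke the standard fact that $\Delta_r$ kills constants and otherwise lowers polynomial degree by exactly one with a predictable leading coefficient: if $p(y) = a_d y^d + (\text{lower order})$ with $d \geq 1$, then $\Delta_r p(y) = d\, r\, a_d\, y^{d-1} + (\text{lower order})$. Iterating $s$ times, for any polynomial $p$ of degree $d$ we obtain the constant
$$\bigl(\Delta_{r_1}\cdots\Delta_{r_s}\, p\bigr)(y) \;=\; \begin{cases} 0, & d < s,\\[2pt] s!\, a_s \prod_{t=1}^{s} r_t, & d = s.\end{cases}$$

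Applying this to $g_{s-1}(y)=\binom{y}{s-1}$, a polynomial in $y$ of degree $s-1<s$, gives \eqref{s-1comb} immediately. Applying it to $g_{s}(y)=\binom{y}{s}=\tfrac{y(y-1)\cdots(y-s+1)}{s!}$, a polynomial of degree $s$ with leading coefficient $1/s!$, gives
$$\bigl(\Delta_{r_1}\cdots\Delta_{r_s}\, g_{s}\bigr)(0) \;=\; s!\cdot \tfrac{1}{s!}\cdot \prod_{t=1}^{s} r_t \;=\; \prod_{t=1}^{s} r_t,$$
which is \eqref{scom}.

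There is no real obstacle here: both the difference-expansion formula and the leading-coefficient recursion under $\Delta_r$ are one-line inductions on $s$. A purely combinatorial alternative would be a simultaneous induction on $s$, splitting each sum according to $l_s\in\{0,1\}$ and telescoping via $\binom{x+r_s}{k}-\binom{x}{k}=\sum_{j=0}^{r_s-1}\binom{x+j}{k-1}$; but the finite-difference viewpoint above is shorter and makes transparent why \eqref{s-1comb} is exactly the ``one degree short'' companion to \eqref{scom}.
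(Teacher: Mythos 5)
Your proof is correct, and it takes a genuinely different route from the paper's. The paper proves the lemma combinatorially: it interprets $\binom{\sum_t l_t r_t}{a}$ as counting monomials in the expansion of $(1+x)^{r_1}\cdots(1+x)^{r_s}$, introduces the sets $A^{(a)}$ and $A_t^{(a)}$ of ``choice patterns'' producing $x^a$, and runs inclusion--exclusion to identify the alternating sum with $|A^{(a)}\setminus\bigcup_t A_t^{(a)}|$, which is empty for $a=s-1$ and equals the product set $D$ of size $\prod_t r_t$ for $a=s$. You instead recognize the alternating sum as the iterated forward difference $(\Delta_{r_1}\cdots\Delta_{r_s}\binom{\cdot}{a})(0)$, use that $\Delta_r$ reduces polynomial degree by exactly one with a computable leading coefficient, and read off both identities from the fact that $\binom{y}{k}$ is a degree-$k$ polynomial with leading coefficient $1/k!$. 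Both arguments are short; yours is more algebraic and makes the ``degree bookkeeping'' transparent (in particular, it is manifest why \eqref{s-1comb} holds for every $a<s$, not just $a=s-1$, and it delivers the leading-coefficient normalization in \eqref{scom} with no case analysis), while the paper's approach is self-contained combinatorics and avoids appeal to the polynomial structure of $\binom{y}{k}$. One point worth stating explicitly if you were to write this up: the polynomial $\binom{y}{k}=\frac{y(y-1)\cdots(y-k+1)}{k!}$ agrees with the combinatorial convention $\binom{n}{k}=0$ for integers $0\le n<k$, so the polynomial identity does reproduce the integer identity appearing in the lemma.
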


\begin{proof}
Let us consider the coefficient $C_{a}$ of $x^{a}$ in the product 
$(1+x)^{r_1}\cdot (1+x)^{r_2}\cdot \cdots \cdot (1+x)^{r_s}=(1+x)^{\sum_{i=1}^{s}r_i}$.
Let us denote 
\begin{align}\label{s-1combA}
A^{(a)}=\{(\vec{j}_1, \vec{j}_2,..., \vec{j}_s), \mathrm{where}\ \vec{j}_{t}=(j_{t,1}, j_{t,2},..., j_{t, r_t}),\ j_{t, k}\in\{0, 1\}| \sum_{t=1}^s\sum_{k=1}^{r_t}j_{t,k}=a\}
\end{align}
Each element in $A^{(a)}$ corresponds to one way of choosing $1$ or $x$ in each term of the product $(1+x)^{r_1}\cdot (1+x)^{r_2}\cdot \cdots \cdot (1+x)^{r_s}$ in order to get $x^{a}$, where $j_{t, k}=0$ means we choose $1$ out of the $k$-th $1+x$ from $(1+x)^{r_t}$, and $j_{t, k}=1$ means we choose $x$ instead of $1$. 
Thus the capacity of $A^{(a)}$, denoted by $|A^{(a)}|$, is equal to $C_\alpha=\binom{\sum_{t=1}r_t}{a}$.
Let us futher denote
\begin{align}\label{s-1combAt}
A_t^{(a)}=A^{(a)}\cap \{\vec{j}_t=\vec{0}\}
\end{align}
For $a=s-1$, since it is impossible to obtain $x^{s-1}$ with $\vec{j}_t\neq \vec{0}$ for any $1\leq t\leq s$, we have
\begin{align}\label{s-1union}
A^{(s-1)}\setminus (\cup_{t=1}^s A_t^{(s-1)})=\emptyset.
\end{align} 
For $a=s$,
\begin{align}\label{sunion}
A^{(s)}\setminus(\cup_{t=1}^s A_t^{(s)})=D,
\end{align} 
where 
\begin{align}\label{scombD}
D=\{(\vec{j}_1, \vec{j}_2,..., \vec{j}_t)|\sum_{k=1}^{r_t}j_{t,k}=1\ \mathrm{for}\ 1\leq t\leq s\}.
\end{align}
Clearly,
\begin{align}\label{s-1combAAt}
|\cup_{t=1}^{s}A_t^{(a)}|=\sum_{i=1}^s (-1)^{i-1} \sum_{1\leq t_1<t_2<\cdots <t_i\leq s}|\cap_{l=1}^i A_{t_l}^{(a)}|,
\end{align}
in which
\begin{align}\label{sumlAtk}
\sum_{1\leq t_1<t_2<\cdots <t_i\leq s}|\cap_{l=1}^i A_{t_l}^{(a)}|=\sum_{\sum_{t=1}^s l_t=s-i}^{l_t=0,1}\binom{\sum_{t=1}^s l_t r_t}{a}.
\end{align}
Thus
\begin{align}\label{Aa}
|A^{(a)}\setminus (\cup_{t=1}^{s}A_t^{(a)})| &=\binom{\sum_{t=1}^sr_t}{a}+\sum_{i=1}^{s}(-1)^{i}\sum_{\sum_{t=1}^s l_t=s-i}^{l_t=0,1}\binom{\sum_{t=1}^s l_t r_t}{a}, \notag\\
&=\sum_{1\leq t \leq s}^{ l_t=0,1} (-1)^{s-\sum_{t=1}^{s}l_t} \binom{\sum_{t=1}^s l_t r_t}{a}.
\end{align}
For $a=s-1$, (\ref{s-1comb}) follows directly from  (\ref{s-1union}) and (\ref{Aa}).
For $a=s$, (\ref{scom}) follows from (\ref{sunion}), (\ref{Aa}) and the fact that $|D|=\prod_{t=1}^s r_t$.    $\hfill{} \Box$
\end{proof}

\subsection{Diophantine $\alpha$. Proof of Lemma \ref{skewsbdd}}
For $\alpha \in DC(\tau)$, we take integers
\begin{equation}\label{H_j}
H_j \sim N^{\frac{2^j}{(2^d-1)(\tau+\epsilon)}}\ \mathrm{for}\   0 \leq j \leq d-1.
\end{equation}
By Lemma $\ref{ETK}$,
\begin{align}\label{alphaETK}
D(\vec{Y}_1,..., \vec{Y}_N) 
& \leq C_d (\frac{1}{H_0}+\sum_{0< |\vec{h}|\leq H_0} \frac{1}{r(\vec h)} |\frac{1}{N}\sum_{n=1}^N e^{2\pi i \langle \vec{h}, \vec{Y}_n \rangle}|) \notag \\
&=C_d (\frac{1}{H_0}+\sum_{0< |\vec{h}|\leq H_0} \frac{1}{r(\vec h)} |\frac{1}{N}\sum_{n=1}^N u_n^{(0)}| ) ,
\end{align}
where 
\begin{align}\label{u0}
u_n^{(0)}=\exp\{2\pi i \sum_{j=1}^{d} (h_j\alpha+ \sum_{r=1}^{d-j}h_{j+r} y_r) \binom{n}{j}\}.
\end{align}

For $1\leq s\leq d-2$, let 
\begin{align}\label{us}
u^{(s)}_{k_1,..., k_s, n}=\exp\Big \{2\pi i \sum_{j=s+1}^d (h_j\alpha+ \sum_{r=1}^{d-j}h_{j+r} y_r)\sum_{1\leq t\leq s}^{l_t=0,1}(-1)^{s-\sum_{t=1}^s l_t}\binom{n+\sum_{t=1}^s l_t k_t}{j}\Big \}
\end{align}
Then by Lemma $\ref{VDC}$,
\begin{align}\label{uss+1}
& |\frac{1}{N-\sum_{t=1}^s k_s}\sum_{n=1}^{N-\sum_{t=1}^s k_t}u_{k_1,..., k_s, n}^{(s)}|^2\\
\lesssim & \frac{1}{H_{s+1}}+\frac{1}{(N-\sum_{t=1}^{s}k_t)H_{s+1}^2}\sum_{k_{s+1}=1}^{H_{s+1}}(H_{s+1}-k_{s+1})|\sum_{n=1}^{N-\sum_{t=1}^{s+1}k_t}u_{k_1,..., k_s, n}^{(s)}\overline{u^{(s)}_{k_1,..., k_s, n+k_{s+1}}}|. \notag
\end{align}
Here 
\begin{align}
&|\sum_{n=1}^{N-\sum_{t=1}^{s+1}k_t}u_{k_1,..., k_s, n}^{(s)}\overline{u^{(s)}_{k_1,..., k_s, n+k_{s+1}}}| \notag\\
=&|\sum_{n=1}^{N-\sum_{t=1}^{s+1}k_t}\exp \Big \{2\pi i \sum_{j=s+1}^d (h_j\alpha+ \sum_{r=1}^{d-j}h_{j+r} y_r)\sum_{1\leq t\leq s}^{l_t=0,1}(-1)^{s-\sum_{t=1}^sl_t}\left(\binom{n+\sum_{t=1}^{s}l_tk_t}{j}-\binom{n+k_{s+1}+\sum_{t=1}^s l_tk_t}{j}\right)\Big \}| \notag\\
=&|\sum_{n=1}^{N-\sum_{t=1}^{s+1}k_t}\exp \Big \{2\pi i \sum_{j=s+1}^d (h_j\alpha+ \sum_{r=1}^{d-j}h_{j+r} y_r)\sum_{1\leq t\leq s+1}^{l_t=0,1}(-1)^{s+1-\sum_{t=1}^{s+1} l_t}\binom{n+\sum_{t=1}^{s+1}l_tk_t}{j} \Big \}| \notag\\
=&|\sum_{n=1}^{N-\sum_{t=1}^{s+1}k_t}\exp \Big \{2\pi i \sum_{j=s+1}^d (h_j\alpha+ \sum_{r=1}^{d-j}h_{j+r} y_r)\sum_{0\leq t\leq s+1}^{l_t=0,1}(-1)^{s+2-\sum_{t=0}^{s+1} l_t}\binom{l_0 n+\sum_{t=1}^{s+1}l_tk_t}{j} \Big \}|\notag\\
=&|\sum_{n=1}^{N-\sum_{t=1}^{s+1}k_t}\exp \Big \{2\pi i \sum_{j=s+2}^d (h_j\alpha+ \sum_{r=1}^{d-j}h_{j+r} y_r)\sum_{0\leq t\leq s+1}^{l_t=0,1}(-1)^{s+2-\sum_{t=0}^{s+1} l_t}\binom{l_0 n+\sum_{t=1}^{s+1}l_tk_t}{j} \Big \}| \label{uss+1detail2}\\
=&|\sum_{n=1}^{N-\sum_{t=1}^{s+1}k_t}\exp \Big \{2\pi i \sum_{j=s+2}^d (h_j\alpha+ \sum_{r=1}^{d-j}h_{j+r} y_r)\sum_{1\leq t\leq s+1}^{l_t=0,1}(-1)^{s+1-\sum_{t=1}^{s+1} l_t}\binom{n+\sum_{t=1}^{s+1}l_tk_t}{j} \Big \}|\notag\\
=&|\sum_{n=1}^{N-\sum_{t=1}^{s+1}k_t}u^{(s+1)}_{k_1,..., k_{s+1}, n}|.\label{uss+1detail}
\end{align}
Notice that in (\ref{uss+1detail2}), we applied (\ref{scom}),
\begin{align*}
\exp \Big \{ (h_{s+1}\alpha+ \sum_{r=1}^{d-s-1}h_{s+1+r} y_r)\sum_{0\leq t\leq s+1}^{l_t=0,1}(-1)^{s+2-\sum_{t=0}^{s+1} l_t}\binom{l_0 n+\sum_{t=1}^{s+1}l_tk_t}{s+1} \Big \}=1.
\end{align*}
Combining (\ref{uss+1}) with (\ref{uss+1detail}), we get for any $0\leq s\leq d-3$,
\begin{align}\label{realuss+1}
       & |\frac{1}{N-\sum_{t=1}^s k_s}\sum_{n=1}^{N-\sum_{t=1}^s k_t}u_{k_1,..., k_s, n}^{(s)}|^2\\
\leq &\frac{1}{H_{s+1}}+\frac{1}{(N-\sum_{t=1}^{s}k_t)H_{s+1}^2}\sum_{k_{s+1}=1}^{H_{s+1}}(H_{s+1}-k_{s+1})(N-\sum_{t=1}^{s+1}k_t)|\frac{1}{N-\sum_{t=1}^{s+1}k_t}\sum_{n=1}^{N-\sum_{t=1}^{s+1}k_t}u^{(s+1)}_{k_1,..., k_{s+1}, n}|. \notag 
\end{align}
By (\ref{uss+1}), for $s=d-2$,
\begin{align}\label{alphad-2}
&|\frac{1}{N-\sum_{l=1}^{d-2}k_l} \sum_{n=1}^{N-\sum_{l=1}^{d-2}k_l} u_{k_1,..., k_{d-2},n}^{(d-2)} |^2 \\
\lesssim &\frac{1}{H_{d-1}} + \frac{1}{(N-\sum_{l=1}^{d-2}k_l)H_{d-1}^2} \sum_{k_{d-1}=1}^{H_{d-1}} (H_{d-1}-k_{d-1}) |\sum_{n=1}^{N-\sum_{l=1}^{d-1}k_l} u_{k_1,..., k_{d-2},n}^{(d-2)} \overline{u_{k_1,..., k_{d-2},n+k_{d-1}}^{(d-2)}} |    \notag\\
\lesssim &\frac{1}{H_{d-1}} + \frac{1}{(N-\sum_{l=1}^{d-2}k_l)H_{d-1}} \sum_{k_{d-1}=1}^{H_{d-1}} |\sum_{n=1}^{N-\sum_{l=1}^{d-1}k_l} u_{k_1,..., k_{d-2},n}^{(d-2)} \overline{u_{k_1,..., k_{d-2},n+k_{d-1}}^{(d-2)}} |, \notag 
\end{align}
and
\begin{align}
   &|\sum_{n=1}^{N-\sum_{l=1}^{d-1}k_l} u_{k_1,..., k_{d-2},n}^{(d-2)} \overline{u_{k_1,...,k_{d-2},n+k_{d-1}}^{(d-2)}} | \notag\\
= &|\sum_{n=1}^{N-\sum_{l=1}^{d-1}k_l} \exp\{2\pi i h_d\alpha \sum^{j_l=0,1}_{1\leq l \leq d-1}(-1)^{d-1-\sum_{l=1}^{d-1}j_l} 
\binom{n+\sum_{j=1}^{d-1} j_l k_l}{d} \} | \notag \\
= &|\sum_{n=1}^{N-\sum_{l=1}^{d-1}k_l} \exp\{2\pi i h_d\alpha \sum^{j_l=0,1}_{0\leq l \leq d-1}(-1)^{d-\sum_{l=0}^{d-1}j_l} 
\binom{l_0 n+\sum_{j=1}^{d-1} j_l k_l}{d} \} | \notag\\
= &|\sum_{n=1}^{N-\sum_{l=1}^{d-1}k_l} \exp\{2\pi i h_d n \alpha \prod_{l=1}^{d-1}k_l\}|  \label{d-1estimatedetail}   \\
\lesssim & \frac{1}{\|h_d \alpha \prod_{l=1}^{d-1} k_l\|_{\T}}, \label{estimated-1}
\end{align}
where in (\ref{d-1estimatedetail}) we used (\ref{scom}).

Since $\alpha\in DC(\tau)$, by the property of Diophantine condition (\ref{Dioctau}) and since $|h_i|\leq H_0$, $1\leq k_i\leq H_i$ we have
\begin{equation}\label{alphatau}
\sum_{k_{d-1}=1}^{H_{d-1}}\frac{1}{\|h_d \alpha \prod_{l=1}^{d-1} k_l \|_{\T}}\leq \sum_{j=1}^{H_{d-1}}\frac{m^{\tau}\prod_{l=1}^{d-1}H_l^{\tau}}{j}\leq m^{\tau}  H_{d-1}^{\tau+\epsilon} \prod_{l=1}^{d-2} H_l^{\tau}.
\end{equation}
Thus combining (\ref{alphad-2}), (\ref{estimated-1}) with (\ref{alphatau}), we have
\begin{equation*}
|\frac{1}{N-\sum_{l=1}^{d-2}k_l}\sum_{n=1}^{N-\sum_{l=1}^{d-2}k_l}u^{(d-2)}_{k_1,...,k_{d-2},n}|^2 \lesssim \frac{1}{H_{d-1}}+\frac{m^{\tau} H_{d-1}^{\tau+\epsilon} \prod_{l=1}^{d-2}H_l^{\tau} }{H_{d-1}(N-\sum_{l=1}^{d-2}H_l)}\lesssim \frac{1}{H_{d-1}}=\frac{1}{H_{d-2}^2}.
\end{equation*}
\begin{lemma}\label{reverse}
For any $\alpha\in \T$, if for any $1\leq k_s\leq H_s$,
\begin{align*}
|\frac{1}{N-\sum_{l=1}^{s}k_l}\sum_{n=1}^{N-\sum_{l=1}^{s}k_l}u^{(s)}_{k_1,...,k_{s},n}|^2\lesssim \frac{1}{H_s^2},
\end{align*} 
then for any $0\leq t\leq s-1$, $1\leq k_t\leq H_t$ we have 
\begin{align*}
|\frac{1}{N-\sum_{l=1}^{t}k_l}\sum_{n=1}^{N-\sum_{l=1}^{t}k_l}u^{(t)}_{k_1,...,k_{t},n}|^2\lesssim \frac{1}{H_t^2}.
\end{align*}
\end{lemma}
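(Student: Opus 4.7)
The plan is by downward induction on $t$, from $t=s-1$ to $t=0$, iterating the Van der Corput bound (\ref{realuss+1}) together with the doubling relation $H_{j+1}\sim H_j^2$ built into the choice (\ref{H_j}). Indeed, from (\ref{H_j}) one computes $H_{j+1}/H_j = N^{(2^{j+1}-2^j)/((2^d-1)(\tau+\epsilon))} = N^{2^j/((2^d-1)(\tau+\epsilon))} = H_j$, so $1/H_{t+1} = 1/H_t^2$ for every $t$. This single algebraic identity is what makes the backward recursion self-similar.

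For the inductive step, assume at level $t+1$ that
$$\Bigl| \frac{1}{N - \sum_{l=1}^{t+1} k_l} \sum_{n=1}^{N - \sum_{l=1}^{t+1} k_l} u^{(t+1)}_{k_1, \ldots, k_{t+1}, n} \Bigr|^2 \lesssim \frac{1}{H_{t+1}^2}$$
holds uniformly for all $1 \le k_l \le H_l$. Plugging the square-root form $|\cdots| \lesssim 1/H_{t+1}$ into (\ref{realuss+1}) with $s$ replaced by $t$, the second term on the right-hand side is bounded by
$$\frac{1}{(N - \sum_{l=1}^t k_l) H_{t+1}^2} \sum_{k_{t+1}=1}^{H_{t+1}} (H_{t+1} - k_{t+1})(N - \sum_{l=1}^{t+1} k_l) \cdot \frac{1}{H_{t+1}} \lesssim \frac{1}{H_{t+1}},$$
after estimating $H_{t+1}-k_{t+1} \le H_{t+1}$ and $(N-\sum_{l=1}^{t+1}k_l)/(N-\sum_{l=1}^t k_l)\le 1$. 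Together with the first term $1/H_{t+1}$, this yields
$$\Bigl| \frac{1}{N - \sum_{l=1}^t k_l} \sum_{n=1}^{N - \sum_{l=1}^t k_l} u^{(t)}_{k_1, \ldots, k_t, n} \Bigr|^2 \lesssim \frac{1}{H_{t+1}} = \frac{1}{H_t^2},$$
uniformly in $1\le k_l\le H_l$. Iterating from $t=s-1$ down to $t=0$ gives the claim.

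The main obstacle — really the only thing to verify carefully — is the bookkeeping of the $N - \sum_l k_l$ denominators. Since $k_l \le H_l$ and by (\ref{H_j}) one has $\sum_l H_l \ll N$ for $N$ sufficiently large, every such denominator is comparable to $N$, so the ratios arising in the recursion cause no loss. It is worth noting that no arithmetic information about $\alpha$ enters the induction at all, consistent with the lemma being stated for arbitrary $\alpha \in \T$; the Diophantine hypothesis of Lemma \ref{skewsbdd} was already fully spent at the base case $s=d-2$ via the estimate (\ref{alphatau}), and the present lemma is a purely combinatorial propagation statement.
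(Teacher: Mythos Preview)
Your proof is correct and follows exactly the paper's approach: the paper carries out the $t=s-1$ step via (\ref{realuss+1}) and the identity $1/H_s = 1/H_{s-1}^2$, then simply writes ``Then we proceed by reverse induction,'' which is precisely the downward induction you have spelled out in full. The only minor remark is that you cite (\ref{H_j}) for the doubling relation $H_{t+1}=H_t^2$, but the lemma is also invoked in the Liouvillean section with the choice (\ref{H_jrational}); the same identity $H_{t+1}=H_t^2$ holds there for $0\le t\le d-3$, so your argument applies verbatim in both settings.
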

\begin{proof}
For $t=s-1$, by (\ref{realuss+1}), 
\begin{align*}
&|\frac{1}{N-\sum_{l=1}^{s-1}k_l}\sum_{n=1}^{N-\sum_{l=1}^{s-1}k_l}u^{(s-1)}_{k_1,...,k_{s-1},n}|^2  \\
\lesssim &\frac{1}{H_{s}} + \frac{1}{(N-\sum_{l=1}^{s-1}k_l)H_{s}^2} \sum_{k_{s}=1}^{H_{s}} (H_{s}-k_{s})(N-\sum_{l=1}^{s}k_l)  |\frac{\sum_{n=1}^{N-\sum_{l=1}^{s}k_l} u_{k_1,..., k_{s},n}^{(s)}}{(N-\sum_{l=1}^{s}k_l)}| \\
\lesssim &\frac{1}{H_{s}}=\frac{1}{H_{s-1}^2}.
\end{align*}
Then we procedd by reverse induction.  $\hfill{} \Box$
\end{proof}

At the final step we obtain
\begin{equation*}
|\frac{1}{N}\sum_{n=1}^N u^{(0)}_n|^2\lesssim \frac{1}{H_0^2}
\end{equation*}
Plugging it into (\ref{alphaETK}), we have
\begin{equation*}
D(\vec{Y}_1,...,\vec{Y}_N)\lesssim \frac{1}{H_0}+\sum_{0<|\vec{h}|\leq H_0} \frac{1}{r(\vec{h})}\frac{1}{H_0}\lesssim \frac{1}{H_0^{1-\epsilon}}\sim N^{-\frac{1-\epsilon}{(2^d-1)(\tau+\epsilon)}}.
\end{equation*}
$\hfill{} \Box$

\subsection{Liouvillean $\alpha$, Proof of Lemma \ref{skewwbdd}}
For $\alpha \notin DC(d)$, by property (\ref{NonDio}), we could find a subsequence $\{\frac{p_n}{q_n}\}$ of the continued fraction approximants of $\alpha$, so that $q_{n+1}> q_n^{d}$. In the following we will use $q$ instead of $q_n$ and $\tilde{q}$ instead of $q_{n+1}$ for simplicity.
Here we would like to show $D_q(\vec{Y}_1,...,\vec{Y}_q)\leq q^{-\delta}$ for some $\delta>0$.
Take 
\begin{equation}\label{H_jrational}
H_j\sim q^{\frac{2^j}{2^d}}\ \mathrm{for}\ 0\leq j \leq d-2\ \ \ \mathrm{and} \ \ \ H_{d-1}\sim q^{\frac{2^{d-1}(1+\epsilon)}{2^d}},
\end{equation}
where $\epsilon>0$ is small enough so that
\begin{equation}\label{<q}
\prod_{l=0}^{d-1}H_l=q^{\frac{2^d-1+2^{d-1}\epsilon}{2^d}}<q.
\end{equation}
Now by Lemma $\ref{ETK}$
\begin{align}\label{rationalETK}
D(\vec{Y}_1,..., \vec{Y}_q) 
\leq C_d (\frac{1}{H_0}+\sum_{0< |\vec{h}|\leq H_0} \frac{1}{r(\vec h)} \mid \frac{1}{q}\sum_{n=1}^q \exp\{2\pi i \sum_{j=1}^d (h_j\alpha+h_{j+1} y_1+...+h_d y_{d-j})\binom{n}{j}\}\mid)
\end{align}
Consider the following difference
\begin{align}\label{difference}
& \frac{1}{q}|\sum_{n=1}^q \exp\{2\pi i \sum_{j=1}^d (h_j\alpha+h_{j+1} y_1+...+h_d y_{d-j})\binom{n}{j}\}-\sum_{n=1}^q \exp\{2\pi i \sum_{j=1}^d (h_j\frac{p}{q}+h_{j+1} y_1+...+h_d y_{d-j})\binom{n}{j}\}|\\
\leq & \frac{1}{q}\sum_{n=1}^q |\exp\{2\pi i \sum_{j=1}^d h_j(\alpha-\frac{p}{q})\binom{n}{j}\}-1| \notag\\
\lesssim & \frac{1}{q} \sum_{n=1}^q \sum_{j=1}^d \binom{n}{j}H_0|\alpha-\frac{p}{q}| \notag\\
\lesssim &\frac{H_0}{q}, \notag
\end{align}
where in the last step we use (\ref{qnqn+1}), $|\alpha-\frac{p}{q}|\leq \frac{1}{q\tilde{q}}<\frac{1}{q^{d+1}}$.

Then combining (\ref{rationalETK}) with (\ref{difference}), we have
\begin{equation}\label{rationalu0}
D(\vec{Y}_1,..., \vec{Y}_q)  \lesssim C_d (\frac{1}{H_0}+\sum_{0< |\vec{h}|\leq H_0} \frac{1}{r(\vec h)} \mid \frac{1}{q}\sum_{n=1}^q u^{(0)}_n \mid)+\frac{H_0}{q},
\end{equation}
where $\tilde{u}^{(0)}_n= \exp\{2\pi i \sum_{j=1}^d (h_j\frac{p}{q}+h_{j+1} y_1+...+h_d y_{d-j})\binom{n}{j}\}$, that is $u_n^{(0)}$ as in (\ref{u0}) with $\alpha$ replaced with $\frac{p}{q}$. 
Thus with $\tilde{u}^{(s)}_{k_1,..., k_s, n}$ defined as in (\ref{us}) with $\alpha$ replaced with $\frac{p}{q}$,
similar to (\ref{alphad-2}) and (\ref{d-1estimatedetail}), we have
\begin{align}\label{rationald-2}
              &|\frac{1}{N-\sum_{l=1}^{d-2}k_l} \sum_{n=1}^{N-\sum_{l=1}^{d-2}k_l} \tilde{u}_{k_1,..., k_{d-2},n}^{(d-2)} |^2 \notag \\
\lesssim &\frac{1}{H_{d-1}} + \frac{1}{(N-\sum_{l=1}^{d-2}k_l)H_{d-1}} \sum_{k_{d-1}=1}^{H_{d-1}} |\sum_{n=1}^{N-\sum_{l=1}^{d-1}k_l} \tilde{u}_{k_1,..., k_{d-2},n}^{(d-2)} \overline{\tilde{u}_{k_1,..., k_{d-2},n+k_{d-1}}^{(d-2)}} |,
\end{align}
and
\begin{align}
  &|\sum_{n=1}^{q-\sum_{l=1}^{d-1}k_l}\tilde{u}_{k_1,...,k_{d-2},n}^{(d-2)}\overline{\tilde{u}_{k_1,...,k_{d-2},n+k_{d-1}}^{(d-2)}}| \notag\\
=&|\sum_{n=1}^{q-\sum_{l=1}^{d-1}k_l}\exp\{2\pi i h_d n \frac{p}{q} \prod_{l=1}^{d-1}k_l\}| \notag\\
\lesssim &\frac{1}{\|h_d \frac{p}{q} \prod_{l=1}^{d-1}k_l\|_{\R/\Z}}.    \label{rationald-1}
\end{align}
Since $|h_d|\leq H_0$, $1\leq k_i\leq H_i$ and ($\ref{<q}$), for any $1\leq k\leq H_{d-1}$ we have $\|k h_d \frac{p}{q}\prod_{l=1}^{d-2} k_l\|_{\R/\Z} \geq \frac{1}{q}$. Thus
\begin{equation}\label{1/q}
\sum_{k_{d-1}=1}^{H_{d-1}}\frac{1}{\|h_d \frac{p}{q} \prod_{l=1}^{d-1}k_l\|_{\R/\Z}}\lesssim \sum_{j=1}^{H_{d-1}}\frac{q}{j}\leq q\ln{H_{d-1}}.
\end{equation}
Then combining (\ref{rationald-2}), (\ref{rationald-1}) with (\ref{1/q}), we get
\begin{equation}
|\frac{1}{q-\sum_{l=1}^{d-2}k_l}\sum_{n=1}^{q-\sum_{l=1}^{d-2}k_l}\tilde{u}^{(d-2)}_{k_1,...,k_{d-2},n}|^2\lesssim \frac{1}{H_{d-1}}+\frac{q\ln{H_{d-1}}}{(q-\sum_{l=1}^{d-2}H_l)H_{d-1}}\lesssim \frac{1}{H^{\frac{1}{1+\epsilon}}_{d-1}}=\frac{1}{H_{d-2}^2}.
\end{equation}
By Lemma \ref{reverse},
\begin{align*}
|\frac{1}{q}\sum_{n=1}^q \tilde{u}_n^{(0)}|^2\lesssim \frac{1}{H_0}.
\end{align*}
Plugging it into (\ref{rationalu0}), we get
\begin{align*}
D(\vec{Y}_1,...,\vec{Y}_q)\lesssim \frac{1}{H_0}+\frac{(\log{H_0})^d}{H_0}+\frac{H_0}{q} \lesssim \frac{1}{q^{\frac{1-\epsilon}{2^d}}}.
\end{align*}

\section{Bounded remainder sets}
Most of the material covered in this section comes from \cite{GL}. We briefly discuss it here for completeness and readers' convenience.
From now on we restrict our attention to irrational rotation on $\T^d$. 
For a measurable set $U\subset \T^d$, consider the function $A_N(U,\vec{x})-N|U|:=A(U,  \{\vec{x}+n{\alpha}\}_{n=0}^{N-1})-N|U|=\sum_{n=0}^{N-1}\chi_{U}(\vec{x}+n\alpha)-N|U|$. 
We will say $U$ is a {\it bounded remainder set} ({\it BRS}) with respect to $\alpha$ if there exists a constant $C(U,\alpha)>0$ such that $|A_N(U,\vec{x})-N|U||\leq C(U,\alpha)$ for any $N$ and a.e. $\vec{x}\in \T^d$. 
We will call a measurable function $g$ on $\T^d$ a transfer function for $U$ if its characteristic function satisfies
\begin{align*}
\chi_{U}(\vec{x})-|U|=g(\vec{x})-g(\vec{x}-\alpha)\ \ \mathrm{a.e.}
\end{align*}
Obviously if $g$ is a transfer function for $U$, then its Fourier coefficients satisfy
\begin{align}\label{Fouriercoe}
\hat{g}(\vec{m})=\frac{\hat{\chi}_{U}(\vec{m})}{1-e^{-2\pi i \langle \vec{m}, \alpha\rangle }}, \ \ \vec{m}\neq 0.
\end{align}

\begin{prop}\label{setU}\cite{GL}
For a measurable set $U\subset \T^d$, the following are equivalent:
\begin{itemize}
\item $U$ is a bounded remainder set.
\item $U$ has a bounded transfer function $g$. 
\end{itemize}
\end{prop}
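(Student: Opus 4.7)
My plan is to prove the two implications separately, with the forward direction being routine and the reverse direction requiring a functional-analytic argument based on weak-$*$ compactness and the Birkhoff ergodic theorem.

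For the easy direction (bounded transfer function $\Rightarrow U$ is a BRS), I would telescope: iterating $\chi_U(\vec{x}+n\alpha) - |U| = g(\vec{x}+n\alpha) - g(\vec{x}+(n-1)\alpha)$ yields
\begin{align*}
A_N(U,\vec{x}) - N|U| = \sum_{n=0}^{N-1}(\chi_U(\vec{x}+n\alpha)-|U|) = g(\vec{x}+(N-1)\alpha) - g(\vec{x}-\alpha),
\end{align*}
which is bounded by $2\|g\|_\infty$ uniformly in $N$ and a.e. $\vec{x}$, so $U$ is a BRS with $C(U,\alpha) \leq 2\|g\|_\infty$.

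For the hard direction ($U$ BRS $\Rightarrow$ bounded $g$), my approach is to construct $g$ as a weak-$*$ limit of Ces\`aro-averaged Birkhoff sums. Set $S_N^-(\vec{x}) = \sum_{n=1}^N(\chi_U(\vec{x}-n\alpha)-|U|)$. Since $S_N^-(\vec{x}) = A_N(U,\vec{x}-N\alpha) - N|U|$, the BRS hypothesis (which is shift-invariant, so holds for the backward orbit) gives $\|S_N^-\|_{L^\infty} \leq C(U,\alpha)$ uniformly in $N$. Define the Ces\`aro averages $\bar{g}_N := \frac{1}{N}\sum_{k=1}^N S_k^-$. Telescoping each $S_k^-(\vec{x}) - S_k^-(\vec{x}-\alpha)$ gives
\begin{align*}
\bar{g}_N(\vec{x}) - \bar{g}_N(\vec{x}-\alpha) = \chi_U(\vec{x}-\alpha) - \frac{1}{N}\sum_{k=1}^N \chi_U(\vec{x}-(k+1)\alpha),
\end{align*}
and by the Birkhoff ergodic theorem (applicable since irrational rotation is uniquely ergodic with respect to Lebesgue measure) the second term converges a.e.\ and in $L^1$ to $|U|$. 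By Banach--Alaoglu applied to the bounded sequence $\{\bar{g}_N\} \subset L^\infty(\T^d) = (L^1(\T^d))^*$, some subsequence converges weak-$*$ to $g^* \in L^\infty$ with $\|g^*\|_\infty \leq C(U,\alpha)$. Pairing the identity above against any $\phi \in L^1$ and passing to the limit (using dominated convergence on the right), I would obtain $g^*(\vec{x}) - g^*(\vec{x}-\alpha) = \chi_U(\vec{x}-\alpha) - |U|$ a.e., and the shift $g(\vec{x}) := g^*(\vec{x}+\alpha)$ then satisfies $g(\vec{x}) - g(\vec{x}-\alpha) = \chi_U(\vec{x}) - |U|$, as required.

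The main obstacle I anticipate is that the naive weak-$*$ limit of $S_N^-$ itself does not produce a transfer function: the difference $S_N^-(\vec{x}) - S_N^-(\vec{x}-\alpha) = \chi_U(\vec{x}-\alpha) - \chi_U(\vec{x}-(N+1)\alpha)$ carries an $N$-dependent characteristic-function term that has no pointwise or weak-$*$ limit. Ces\`aro averaging is essential precisely to convert this $N$-dependent term into an ergodic average, which then converges to the deterministic constant $|U|$ by unique ergodicity; this is the step that genuinely uses the dynamics and links the BRS hypothesis to the coboundary structure.
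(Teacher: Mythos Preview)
The paper does not supply its own proof of this proposition; it is simply stated with a citation to \cite{GL}, and the paper only reproduces the proofs of the subsequent Theorems \ref{intervalBRS} and \ref{setBRS} (in order to extract explicit bounds on the transfer functions). So there is no in-paper argument to compare against.

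Your proof is correct. The easy direction is the obvious telescope. For the hard direction, your Ces\`aro-averaging argument is a concrete realization of the standard Gottschalk--Hedlund style construction: the uniform bound on the backward Birkhoff sums $S_N^-$ gives a bounded sequence in $L^\infty$, weak-$*$ compactness yields a subsequential limit, and the Ces\`aro averaging is exactly what is needed to replace the stray $N$-dependent term by its ergodic limit $|U|$, so that the coboundary equation survives the limit. Your justification of the limit on the right-hand side is fine: the ergodic averages of $\chi_U$ are uniformly bounded by $1$ and converge a.e.\ to $|U|$ by Birkhoff (ergodicity of irrational rotation with respect to Lebesgue measure suffices here; unique ergodicity is not needed and would not directly apply to the discontinuous observable $\chi_U$), so dominated convergence handles any $\phi\in L^1$. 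The final shift $g(\vec{x})=g^*(\vec{x}+\alpha)$ is correct. This is essentially the argument one finds in \cite{GL}.
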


Theorems \ref{intervalBRS}, \ref{setBRS} and Corollary \ref{2dbound} are presented in \cite{GL} without explicit bounds on the transfer functions. We present the proofs in order to extract the needed estimates.

\begin{thm}\label{intervalBRS}
Any interval $I\subset \T$ of length $0<|q\alpha-p|<1$ is a {\it BRS} with respect to $\alpha$, furthermore its transfer function $g$ satisfies $\|g\|_{\infty} \leq |q|$.
\end{thm}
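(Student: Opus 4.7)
The plan is to construct the transfer function $g$ explicitly via a telescoping identity for fractional parts and then invoke Proposition \ref{setU}. First, since any interval $I \subset \T$ of length $\beta := |q\alpha - p|$ is a translate of $I_0 := [0, \beta)$, and the coboundary equation is translation-covariant (if $g$ is a transfer function for $I_0$, then $x \mapsto g(x-a)$ is one for $I_0 + a$ with the same sup-norm), it suffices to treat $I_0$. After possibly replacing $(q, p)$ with $(-q, -p)$, I may also assume $\beta = q\alpha - p$ with $q \in \N$.

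Next, I would set
\begin{equation*}
g(x) = -\sum_{k=0}^{q-1} \{x - k\alpha\} + \frac{q}{2},
\end{equation*}
where $\{y\} \in [0,1)$ denotes the fractional part. A telescoping computation yields
\begin{equation*}
g(x) - g(x-\alpha) = -\sum_{k=0}^{q-1}\{x-k\alpha\} + \sum_{k=1}^{q}\{x-k\alpha\} = \{x-q\alpha\} - \{x\}.
\end{equation*}
Because $q\alpha - \beta = p \in \Z$, one has $\{x - q\alpha\} = \{x - \beta\}$. A case split on whether $x \in [0,\beta)$ or $x \in [\beta, 1)$ shows that $\{x-\beta\} - \{x\}$ equals $1-\beta$ on $[0, \beta)$ and $-\beta$ on $[\beta, 1)$, i.e., exactly $\chi_{I_0}(x) - \beta$. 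This verifies the coboundary equation.

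For the sup-norm bound, each summand satisfies $\{x - k\alpha\} \in [0, 1)$, so $\sum_{k=0}^{q-1}\{x-k\alpha\} \in [0, q)$, giving $g(x) \in (-q/2, q/2]$ and hence $\|g\|_\infty \le q/2 \le |q|$. Proposition \ref{setU} then implies that $I$ is a BRS. There is no serious obstacle here: the whole content of the argument is the observation that telescoping naturally produces $\{x - q\alpha\} - \{x\}$, and the arithmetic hypothesis $|I| = |q\alpha - p|$ is \emph{precisely} what makes this quantity equal, modulo an integer, to $\chi_{I_0}(x) - |I_0|$. One could alternatively arrive at this construction via the Fourier identity $\hat{g}(m) = \hat{\chi}_{I_0}(m)/(1 - e^{-2\pi i m\alpha})$ and the factorization $1 - e^{-2\pi i m q\alpha} = (1 - e^{-2\pi i m\alpha})\sum_{k=0}^{q-1} e^{-2\pi i m k\alpha}$, which lands on the same explicit $g$.
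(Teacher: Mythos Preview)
Your proof is correct and follows essentially the same route as the paper: both construct the transfer function as $g(x)=-\sum_{k=0}^{q-1}\{x-k\alpha\}$ (you add the harmless constant $q/2$) and verify the coboundary identity via telescoping together with $\{x-q\alpha\}=\{x-\beta\}$. Your treatment is slightly more careful in spelling out the translation reduction and the sign normalization, and your centering even yields the marginally sharper bound $\|g\|_\infty\le |q|/2$.
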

\begin{proof}
Without loss of generality, we consider an interval $I=[0,\kappa]$, where $\kappa=q\alpha-p>0$. Then
\begin{align*}
\chi_I(x)-|I|
&=-\{x\}+\{x-\kappa\}\\
&=-\{x\}+\{x-q\alpha\}\\
&=(-\{x\}-...-\{x-(q-1)\alpha\})+(\{x-\alpha\}+...+\{x-q\alpha\})\\
&=g(x)-g(x-\alpha),
\end{align*} where $g(x)=-\sum_{j=0}^{q-1}\{x-j\alpha\}$, $\|g\|_{\infty}\leq |q|$.
\end{proof} $\hfill{} \Box$

\begin{thm}\label{setBRS}
Let $\vec{v}=(v_1,v_2,...,v_d)=q\alpha-\vec{p} \in \Z\alpha+\Z^d$, $v\notin \Z^d$, and let $\Sigma\in \T^{d-1}$ be a {\it BRS} with respect to the vector $(\frac{v_1}{v_d}, \frac{v_2}{v_d},... \frac{v_{d-1}}{v_d})$ with transfer function $h$. Then the set 
\begin{align*}
U=U(\Sigma, \vec{v})=\{(\vec{x},0)+t\vec{v}: \vec{x}\in \Sigma, 0\leq t<1\},
\end{align*}
is a {\it BRS} with respect to $\alpha$, whose transfer function $g$ satisfies $\|g\|_{\infty}\leq |q|(\|h\|_{\infty}+1)$.
\end{thm}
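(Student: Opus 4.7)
The plan is to mimic the structure of Theorem~\ref{intervalBRS}: construct a ``one-step'' transfer function $f$ for the $\vec v$-translation on $\T^d$, and then obtain $g$ as a sum of $q$ consecutive $\alpha$-translates of $f$. The gain in the constant comes from the $d$-dimensional piece $h$, while the factor of $|q|$ comes from the telescoping.

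\textbf{Step 1 (reduction by telescoping).} Suppose we have $f:\T^d\to\R$ satisfying
\[
f(\vec x) - f(\vec x-\vec v) \;=\; \chi_U(\vec x)-|U| \quad \text{a.e.}
\]
Since $q\alpha - \vec v = \vec p \in\Z^d$, translation by $\vec v$ on $\T^d$ coincides with translation by $q\alpha$, so $f(\vec x - q\alpha) = f(\vec x-\vec v)$ on $\T^d$. Setting $g(\vec x) := \sum_{j=0}^{q-1} f(\vec x - j\alpha)$ and telescoping gives
\[
g(\vec x)-g(\vec x-\alpha) \;=\; f(\vec x)-f(\vec x-q\alpha) \;=\; \chi_U(\vec x) - |U|,
\]
so $g$ is a transfer function for $U$ with respect to $\alpha$, and $\|g\|_\infty\leq |q|\,\|f\|_\infty$. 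It therefore suffices to construct $f$ with $\|f\|_\infty \leq \|h\|_\infty+1$.

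\textbf{Step 2 (construction of $f$ via slant coordinates).} Write $\vec v=(\vec v',v_d)$ (assume $v_d\neq 0$ and, after reducing mod $\Z$, $v_d\in(0,1)$) and $\vec x=(\vec y,z)$. Introducing the slant coordinate $\vec w := \vec y-(z/v_d)\vec v'$, one checks that $\vec v$-translation fixes $\vec w$ and shifts $z\mapsto z+v_d$; moreover $\chi_U(\vec x) = \chi_\Sigma(\vec w)\,\chi_{[0,v_d)}(z)$ and $|U|=|\Sigma|\,v_d$. Decompose
\[
\chi_U(\vec x)-|U| \;=\; \bigl[\chi_\Sigma(\vec w)-|\Sigma|\bigr]\chi_{[0,v_d)}(z) \;+\; |\Sigma|\bigl[\chi_{[0,v_d)}(z)-v_d\bigr].
\]
The second bracket is a one-dimensional version of the problem, and by the computation in Theorem~\ref{intervalBRS} admits the one-step transfer function $-\{z\}$ of sup norm $\leq 1$, contributing at most $|\Sigma|\leq 1$ to $\|f\|_\infty$. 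The first bracket equals $[h(\vec w)-h(\vec w-\vec v'/v_d)]\,\chi_{[0,v_d)}(z)$ by definition of $h$, and its natural one-step transfer along $z\mapsto z+v_d$, expressed back in $(\vec y,z)$-coordinates, is of the form $h(\vec w(\vec y,z))$ multiplied by a piecewise-constant indicator in $z$; this contributes at most $\|h\|_\infty$. Summing gives $\|f\|_\infty\leq \|h\|_\infty+1$.

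\textbf{Step 3 (main obstacle: verifying the identity on $\T^d$).} The slant map $(\vec y,z)\mapsto \vec w$ is \emph{not} well-defined as a map $\T^d\to\T^{d-1}$: changing the $z$-lift by $1$ shifts $\vec w$ by $-\vec v'/v_d$ modulo $\Z^{d-1}$. This is precisely the seam across which one must check that the candidate $f$ descends to the torus and satisfies the cobounding identity. The reason it works is structural: the jump in $\vec w$ at the seam is exactly $\vec v'/v_d$, and the transfer-function identity $\chi_\Sigma(\vec w)-|\Sigma|=h(\vec w)-h(\vec w-\vec v'/v_d)$ is tailored to cancel this jump. Combining this with the one-dimensional piece in $z$ and plugging the resulting $f$ back into the telescoping formula of Step~1 yields $\|g\|_\infty\leq |q|(\|h\|_\infty+1)$, as required. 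The delicate verification at the seam is where the hypothesis on $\Sigma$ and its transfer function $h$ is used in an essential, non-formal way.
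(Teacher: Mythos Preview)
Your overall architecture is exactly the paper's: first construct a bounded transfer function $\tilde g$ (your $f$) for the $\vec v$-translation on $\T^d$, then telescope over $q$ consecutive $\alpha$-translates to get $g$ with $\|g\|_\infty\le |q|\,\|\tilde g\|_\infty$. Step~1 is correct and matches the paper verbatim.

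The gap is in Step~2. Your description of the first-piece transfer function as ``$h(\vec w(\vec y,z))$ multiplied by a piecewise-constant indicator in $z$'' is not right, and you never actually write $f$ down or verify the coboundary identity. In fact no indicator is needed: the correct one-step transfer function is simply
\[
f(\vec y,z)\;=\;h\!\Bigl(\vec y-\tfrac{\{z\}}{v_d}\,\vec v'\Bigr)\;-\;|\Sigma|\,\{z\},
\]
which is exactly the paper's formula. The point you flagged in Step~3 is precisely what makes this work: writing $\vec w(\vec y,z)=\vec y-\tfrac{\{z\}}{v_d}\vec v'$, one computes that $\vec w(\vec y-\vec v',z-v_d)$ equals $\vec w(\vec y,z)$ when $\{z\}\ge v_d$ and equals $\vec w(\vec y,z)-\vec v'/v_d$ when $\{z\}<v_d$. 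Hence
\[
h(\vec w(\vec y,z))-h(\vec w(\vec y-\vec v',z-v_d))
=\bigl[h(\vec w)-h(\vec w-\vec v'/v_d)\bigr]\chi_{[0,v_d)}(\{z\})
=\bigl[\chi_\Sigma(\vec w)-|\Sigma|\bigr]\chi_{[0,v_d)}(\{z\}),
\]
so the indicator $\chi_{[0,v_d)}$ is produced \emph{automatically} by the seam jump, not imposed by hand. Combined with $-|\Sigma|(\{z\}-\{z-v_d\})=|\Sigma|(\chi_{[0,v_d)}(\{z\})-v_d)$, this gives $f(\vec x)-f(\vec x-\vec v)=\chi_U-|U|$ and $\|f\|_\infty\le\|h\|_\infty+|\Sigma|\le\|h\|_\infty+1$. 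The paper arrives at the same $f$ via a Fourier-coefficient computation rather than your slant-coordinate decomposition, but the content is identical; you just need to replace the vague guess in Step~2 by the explicit formula above and carry out the two-line seam check.
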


\begin{proof}
Let $\vec{v}_0=(v_1,...,v_{d-1})$ be the vector in $\T^{d-1}$, which consists of the first $d-1$ entries of $\vec{v}$. First, we wish to find a bounded function $\tilde{g}$ on $\T^d$ satisfying the cohomological equation
\begin{align*}
\chi_U(\vec{x},y)-|U|=\tilde{g}(\vec{x},y)-\tilde{g}(\vec{x}-\vec{v}_0,y-v_d)\ \ \ \mathrm{for}\ \mathrm{a.e.}\ (\vec{x},y)\in \T^{d-1}\times \T.
\end{align*}
This means the Fourier coefficients satisfy the equation
\begin{align}
\hat{\tilde{g}}(\vec{m}, n)(1-e^{-2\pi i (\langle \vec{m}, \vec{v}_0 \rangle+nv_d)})=\int_{0}^{v_d}\int_{\Sigma+\frac{y}{v_d}\vec{v}_0}e^{-2\pi i \langle \vec{m}, \vec{x}+\frac{y}{v_d}\vec{v}_0 \rangle} \mathrm{d}\vec{x}\ e^{-2\pi i ny} \mathrm{d}y,\ \ (\vec{m}, n)\neq (\vec{0}, 0). 
\end{align}
Which implies
\begin{align}\label{coholoequ}
\hat{\tilde{g}}(\vec{m},n)=\frac{\hat{\chi}_{\Sigma}(\vec{m})}{2\pi i(\langle \vec{m}, \vec{v}_0 \rangle/{v_d}+n)}, \ \ (\vec{m}, n)\neq (\vec{0}, 0).
\end{align}   
We know $\Sigma$ is a $BRS$ with respect to $\vec{v}_0/{v_d}$, by (\ref{Fouriercoe}) its transfer function $h: \T^{d-1}\rightarrow\R$ satisfies
\begin{align*}
\hat{h}(\vec{m})=\frac{\hat{\chi}_{\Sigma}(\vec{m})}{1-e^{-2\pi i \langle \vec{m},\vec{v}_0\rangle /{v_d}}}, \ \ \vec{m}\neq 0.
\end{align*}
It is straightforward to check that the bounded function $\tilde{g}$ defined by
\begin{align*}
\tilde{g}(\vec{x},y)=h(\vec{x}-\frac{\vec{v}_0}{v_d}\{y\})-|\Sigma|\cdot \{y\},
\end{align*}
satisfies the coholomogical equation ($\ref{coholoequ}$).
Hence $\tilde{g}$ is a bounded transfer function for $U$ with respect  to $\vec{v}$.

Indeed, $\|\tilde{g}\|_{\infty}\leq \|h\|_{\infty}+1$. Since $\vec{v}=q\alpha-\vec{p}$, letting $g(\vec{x})=\tilde{g}(\vec{x})+\tilde{g}(\vec{x}-\alpha)+...+\tilde{g}(\vec{x}-(q-1)\alpha)$ we have that $U$ is a {\it BRS} with respect to $\alpha$ with bounded transfer function $g$ satisfying $\|g\|_{\infty}\leq |q|\|\tilde{g}\|_{\infty}\leq |q|(\|h\|_{\infty}+1)$.
\end{proof} $\hfill{} \Box$

The following corollary will be used several times in section 8.
\begin{cor}\label{2dbound}
Let $U\subset \T^2$ be the parallelogram spanned by two vectors $m(\alpha_1, \alpha_2)-(l_{1}, l_{2})$ and $(q\frac{m \alpha_1-l_1}{m \alpha_2-l_2}- p,\ 0)$, then $U$ is a {\it BRS} with respect to $(\alpha_1,\alpha_2)$ with transfer function $g$ satisfying $\|g\|_{\infty}\leq |m|(|q|+1)\leq 2|mq|$.
\end{cor}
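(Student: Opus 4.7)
The plan is to recognize the corollary as a direct application of Theorem~\ref{setBRS} in dimension $d=2$, with the one-dimensional set $\Sigma$ supplied by Theorem~\ref{intervalBRS}. First I would match notation: set $\vec{v}=m(\alpha_1,\alpha_2)-(l_1,l_2)=(v_1,v_2)$, so in the framework of Theorem~\ref{setBRS} the role of $q$ is played by $m$, the component $v_d$ becomes $v_2=m\alpha_2-l_2$, and the lower-dimensional vector $\vec{v}_0/v_d$ from Theorem~\ref{setBRS} is the single number $\beta=\frac{v_1}{v_2}=\frac{m\alpha_1-l_1}{m\alpha_2-l_2}\in\T$.

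Next I would produce $\Sigma\subset\T$. By hypothesis $\beta\in\Z\beta+\Z$ satisfies $q\beta-p=q\tfrac{m\alpha_1-l_1}{m\alpha_2-l_2}-p$, so Theorem~\ref{intervalBRS} applies (assuming the length lies in $(0,1)$, which is exactly the non-degeneracy condition needed for the parallelogram to make sense) to the interval $\Sigma=[0,\,q\beta-p]$ of length $|q\beta-p|$, giving a transfer function $h$ for $\Sigma$ with respect to $\beta$ satisfying $\|h\|_\infty\leq|q|$.

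Now I plug into Theorem~\ref{setBRS} with $d=2$, $\vec v$ as above, and this $\Sigma$. The resulting bounded remainder set is, by the very definition in that theorem,
\begin{equation*}
U(\Sigma,\vec v)=\{(x,0)+t\vec v: x\in\Sigma,\ 0\leq t<1\},
\end{equation*}
which is precisely the parallelogram in the statement, spanned by $(q\beta-p,\,0)$ (the $\Sigma$-side, embedded at height zero) and $\vec v=m(\alpha_1,\alpha_2)-(l_1,l_2)$. The quantitative conclusion of Theorem~\ref{setBRS} then yields a transfer function $g$ for $U$ with respect to $(\alpha_1,\alpha_2)$ satisfying $\|g\|_\infty\leq |m|(\|h\|_\infty+1)\leq|m|(|q|+1)$, and since $|q|\geq 1$ this is bounded by $2|mq|$, proving the claim.

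There is no real obstacle here: the whole statement is a bookkeeping exercise in chaining Theorems~\ref{intervalBRS} and \ref{setBRS}. The only point requiring a moment's care is the sign/modulus of the interval length $q\beta-p$, handled exactly as in the proof of Theorem~\ref{intervalBRS} (one may assume $q\beta-p>0$ without loss of generality, otherwise reverse orientation), and the implicit assumption that $m\alpha_2-l_2\neq 0$ so that $\beta$ is well defined.
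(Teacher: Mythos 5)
Your proposal is correct and follows the paper's own proof exactly: identify $\vec{v}=m(\alpha_1,\alpha_2)-(l_1,l_2)$ and $\Sigma=[0,\,q\tfrac{v_1}{v_2}-p]$, invoke Theorem~\ref{intervalBRS} to get $\|h\|_\infty\leq|q|$, then apply Theorem~\ref{setBRS} with $d=2$ to obtain $\|g\|_\infty\leq|m|(|q|+1)\leq 2|mq|$. The extra remarks on non-degeneracy ($m\alpha_2-l_2\neq0$, interval length in $(0,1)$) are sensible bookkeeping that the paper leaves implicit.
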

\begin{proof}
In this case $\mathrm{v}=(v_1, v_2)=m(\alpha_1,\alpha_2)-(l_1,l_2)\in \Z\alpha+\Z^2$, $\Sigma=[0, q\frac{v_1}{v_2}-p]\times \{0\}$. We know the transfer function $h$ of $\Sigma$ with respect to $v_1/{v_2}$ satisfies $\|h\|_{\infty}\leq |q|$. Thus $\|g\|_{\infty}\leq |m|(|q|+1)\leq 2|mq|$.
\end{proof} $\hfill{} \Box$

\section{2-dimensional irrational rotation with weak diophantine frequencies}
In this section we deal with 2-dimensional weakly Diophantine frequencies. Our goal is to prove Lemma \ref{2dsteps}.

\subsection*{Proof of Lemma \ref{2dsteps}}
Assume $(\alpha_1, \alpha_2)\in WDC(c_0,\tau/4)$, for some $\tau>4$ and $c_0>0$. We divide the discussion into two parts.

First, we introduce the coprime Diophantine condition: 
\begin{align}\label{coprime}
PDC(\tau)=\cup_{c>0}PDC(c,\tau)=\cup_{c>0}\{(\alpha_1,\alpha_2)| 
 \|\langle \vec{h}, \mathbf{\alpha} \rangle\|_{\T} \geq \frac{c}{|\vec{h}|^{\tau}}\ &\mathrm{for}\ \mathrm{any}\ \gcd(h_1, h_2)=1\\ 
                                                                    & \mathrm{or}\ h_1h_2=0\ \mathrm{but}\ \vec{h}\neq \vec{0}\}.\notag
\end{align}
Obviously if $\alpha\in PDC(c, \tau)$,  both $\alpha_1$ and $\alpha_2$ belong to $DC(c, \tau)$.
\subsubsection*{Case A}
$(\alpha_1,\alpha_2)\in PDC(c_1,\tau)$ for some $c_1>0$. 

Let's take the best simultaneous approximation $\{(\frac{l_{1,n}}{m_n}, \frac{l_{2,n}}{m_n})\}$ of $(\alpha_1, \alpha_2)$. 
They feature the following property.
\begin{lemma}\label{2dnk}$\mathrm{(}$\cite{Lag}, Theorem 3.5$\mathrm{)}$
If $\{1,\alpha_1, \alpha_2\}$ is linearly independent over $\Q$, then there are infinitely many $n_k$ such that 
\begin{align*}
\left|
\begin{matrix}
&m_{n_k}\ \ \ \ \ l_{1,n_k}\ \ \ \ \ l_{2,n_k}\ \ \\
&m_{n_k+1}\ \ l_{1, n_k+1}\ \ l_{2, n_k+1}\ \ \\
&m_{n_k+2}\ \ l_{1, n_k+2}\ \ l_{2, n_k+2}\ \
\end{matrix}
\right|
\neq 0
\end{align*}
\end{lemma}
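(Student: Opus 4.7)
The plan is to argue by contradiction. Assume that for every $n \ge N_0$, the three rows $v_n := (m_n, l_{1,n}, l_{2,n})$, $v_{n+1}$, $v_{n+2}$ in $\Z^3$ are $\Q$-linearly dependent; I aim to extract a non-trivial $\Q$-linear relation among $1,\alpha_1,\alpha_2$, contradicting the hypothesis. Geometrically, the contradiction hypothesis should confine the whole tail $\{v_n\}_{n\ge N_0}$ to a fixed rational $2$-plane through the origin of $\Q^{3}$, whose defining equation, rescaled by $m_n$ and passed to the limit, becomes the forbidden relation.

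The key preliminary is that consecutive vectors $v_n$ and $v_{n+1}$ are always $\Q$-linearly independent. If instead $v_{n+1}=\lambda v_n$, then $\lambda=m_{n+1}/m_n>1$, and, with $l_{j,n+1}$ chosen as the nearest integer to $m_{n+1}\alpha_j$ (so that $\|m_{n+1}\alpha_j\|_\T=|m_{n+1}\alpha_j-l_{j,n+1}|$), one gets $|m_n\alpha_j-l_{j,n}|=\|m_{n+1}\alpha_j\|_\T/\lambda$, hence $\|m_n\alpha_j\|_\T\le \|m_{n+1}\alpha_j\|_\T/\lambda$. Squaring and summing over $j=1,2$ yields
\begin{equation*}
\sum_{j=1}^{2}\|m_n\alpha_j\|_\T^{2}\ \le\ \lambda^{-2}\sum_{j=1}^{2}\|m_{n+1}\alpha_j\|_\T^{2}.
\end{equation*}
On the other hand, the defining minimality of $m_{n+1}$ applied to the admissible $k=m_n<m_{n+1}$ gives $\sum_j\|m_{n+1}\alpha_j\|_\T^{2}<\sum_j\|m_n\alpha_j\|_\T^{2}$; combined with the previous display this forces $1<\lambda^{-2}$, contradicting $\lambda>1$.

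With consecutive independence in hand, set $W_n:=\mathrm{span}_{\Q}(v_n,v_{n+1})$, a $2$-dimensional subspace of $\Q^{3}$. The standing assumption $\det(v_n,v_{n+1},v_{n+2})=0$ places $v_{n+2}\in W_n$, so $W_{n+1}\subseteq W_n$, and equality of dimensions gives $W_{n+1}=W_n$. Iterating, every $v_n$ with $n\ge N_0$ lies in a common $2$-plane $W\subset\Q^{3}$, so clearing denominators in its defining equation furnishes integers $(a,b,c)\ne(0,0,0)$ with
\begin{equation*}
a m_n + b l_{1,n}+c l_{2,n}=0 \qquad \text{for every } n\ge N_0.
\end{equation*}
Dividing by $m_n$ and letting $n\to\infty$, the standard property $l_{j,n}/m_n\to\alpha_j$ of best simultaneous approximations produces $a+b\alpha_1+c\alpha_2=0$ with $(a,b,c)\in\Z^{3}\setminus\{0\}$, contradicting the $\Q$-linear independence of $\{1,\alpha_1,\alpha_2\}$.

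The main technical obstacle is the first step---ruling out $v_{n+1}\parallel v_n$---where care is needed because $l_{j,n+1}$ is a priori just some integer; I resolve this by always normalizing $l_{j,n}$ to be the nearest integer to $m_n\alpha_j$, consistent with the paper's convention that $\|m_n\alpha_j\|_\T=|m_n\alpha_j-l_{j,n}|$, after which the comparison above is immediate. Everything else reduces to routine linear algebra plus a single limit argument.
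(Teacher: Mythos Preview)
The paper does not give its own proof of this lemma; it is quoted from \cite{Lag} (Lagarias, Theorem 3.5) and used as a black box. Your argument supplies a correct, self-contained proof: the consecutive-independence step is sound (with $l_{j,n}$ normalized to the nearest integer to $m_n\alpha_j$, the relation $v_{n+1}=\lambda v_n$ forces $\|m_{n+1}\alpha_j\|_\T=\lambda\|m_n\alpha_j\|_\T$, which contradicts the best-approximation inequality since $\lambda=m_{n+1}/m_n>1$ and $\sum_j\|m_n\alpha_j\|_\T^2>0$ by irrationality), and the remaining linear-algebra/limit argument is routine and correct. So your proposal is fine; there is simply nothing in the paper to compare it against.
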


Now we take $r_k>0$ such that 
\begin{align}\label{rk}
m_{n_k}\leq \frac{4}{\pi} r_k^{-2}<m_{n_k+1}.
\end{align} 
By (\ref{bestsimqnqn+1}), the choice of $r_k$ guarantees that for $n\geq n_k$, 
\begin{align}\label{Amnball}
(m_n\alpha_1-l_{1,n}, m_n\alpha_2-l_{2,n}) \in B_{r_k}(0,0),
\end{align} 
where $B_r(x_1, x_2):=\{y=(y_1, y_2)\in \T^2: \|y_1-x_1\|_{\T}^2+\|y_2-x_2\|_{\T}^2<r_k^2\}$. 
Let $\{\frac{p_{n,s}}{q_{n,s}}\}_{s=1}^\infty$ be the continued fraction approximants of $\frac{m_n\alpha_1-l_{1,n}}{m_n\alpha_2-l_{2,n}}$. 
For each $n$ choose $s_n$ such that 
\begin{align}\label{sn}
q_{n,s_n}\leq r_k^{-1}< q_{n, s_n+1}.
\end{align} 
By (\ref{qnqn+1}), the choice of $s_n$ guarantees that 
\begin{align}\label{Asnball}
(q_{n,s_n}\frac{m_n\alpha_1-l_{1,n}}{m_n\alpha_2-l_{2,n}}-p_{n,s_n},0)\in B_{r_k}(0,0).
\end{align}
By (\ref{bestsimqnqn+1}) and (\ref{multiweakDio}) we have 
\begin{equation}\label{maxbound}
\frac{c_0}{m_n^{\tau/4}}\leq \max\{|m_n\alpha_1-l_{1,n}|, |m_n\alpha_2-l_{2,n}|\} \leq \frac{2}{\sqrt{\pi}\sqrt{m_{n+1}}},
\end{equation} 
by (\ref{rk}) we have $m_{n_k}\leq \frac{4}{\pi}r_k^{-2}$, thus 
\begin{align}\label{Amnk012}
\max{(m_{n_k}, m_{n_k+1}, m_{n_k+2})} \leq C_{c_0, \tau} r_k^{-\frac{\tau^2}{2}}.
\end{align}
\subsubsection*{Case A.1:} 
For some $n\in\{n_k, n_k+1, n_k+2\}$, we have $q_{n,s_n+1}\leq r_k^{-2\tau^4}$.

Let $U$ be the parallelogram spanned by the two vectors $m_n(\alpha_1,\alpha_2)-(l_{1,n}, l_{2,n})$ and $(q_{n,s_n}\frac{m_n\alpha_1-l_{1,n}}{m_n\alpha_2-l_{2,n}}-p_{n,s_n},\ 0)$. 
By (\ref{Amnball}) and (\ref{Asnball}), $U\subset B_{2r_k}(0, 0)$.
Corollary $\ref{2dbound}$ implies that $|\sum_{j=0}^{M-1}\chi_U(x+j\alpha_1, y+j\alpha_2)-M|U| |\leq 4|m_n q_{n,s_n}|$ for $a.e.\ (x,y)$. 
Thus as long as $M>\frac{4|m_n q_{n,s_n}|}{|U|}$, we should have $\cup_{j=0}^{M-1}U-(j\alpha_1, j\alpha_2)$ covers the whole $\T^2$ up to a measure zero set. 
Then 
\begin{align}\label{A1T2|U|}
\T^2 \subseteq \cup_{j=0}^{M-1} B_{2r_k}(-j\alpha_1, -j\alpha_2)\ \mathrm{for}\ M>\frac{4|m_n q_{n,s_n}|}{|U|}.
\end{align}
Now we want to estimate $|U|$. 
Since $ \alpha_2\in DC(c_1, \tau)$, by (\ref{Dioctau}) we have
\begin{align*}
|U|=|m_n\alpha_2-l_{2,n}|\cdot |q_{n,s_n}\frac{m_n\alpha_1-l_{1,n}}{m_n\alpha_2-l_{2,n}}-p_{n,s_n}|\geq \frac{c_1}{|m_n|^\tau}\frac{1}{2q_{n,s_n+1}}.
\end{align*}
Thus by (\ref{sn}) and (\ref{Amnk012}),
\begin{align*}
\frac{4|m_n|q_{n,s_n}}{|S|}
\leq \frac{8}{c_1} |m_n|^{1+\tau} q_{n,s_n}q_{n,s_n+1}
\leq C_{c_0, c_1, \tau} r_k^{-3\tau^4}.
\end{align*}
This means it takes $B_{2r_k}(0,0)$ at most $C_{\alpha_1,\alpha_2,\tau} r_k^{-3\tau^4}$ steps to cover the whole $\T^2$.
\subsubsection*{Case A.2} 
We will show now it is impossible to have $q_{n,s_n+1}>r_k^{-2\tau^4}$ for all $n\in\{n_k, n_k+1, n_k+2\}$. 
In this case by (\ref{qnqn+1}), (\ref{bestsimqnqn+1}) and (\ref{rk}), we have:
\begin{align}\label{defMn}
|q_{n,s_n}m_n\alpha_1-p_{n,s_n}m_n\alpha_2+M_n|=|m_n\alpha_2-l_{2,n}|\cdot |q_{n, s_n}\frac{m_n\alpha_1-l_{1,n}}{m_n\alpha_2-l_{2,n}}-p_{n,s_n}|< \frac{2}{\sqrt{\pi}\sqrt{|m_{n+1}|}q_{n, s_n}}<r_k^{2\tau^4+1}
\end{align}
where $M_n=p_{n,s_n}l_{2,n}-q_{n,s_n}l_{1,n}$. 

We have the following estimates on the upper bounds of $p_{n,s_n}$ and $M_n$. 
Combining (\ref{Dioctau}), (\ref{rk}), (\ref{sn}), (\ref{maxbound}) with (\ref{Amnk012}),
\begin{align}\label{pnsn}
|p_{n,s_n}|\leq q_{n,s_n}|\frac{m_n\alpha_1-l_{1,n}}{m_n\alpha_2-l_{2,n}}|+\frac{1}{q_{n, s_n+1}}\leq \frac{2q_{n, s_n}|m_n|^{\tau}}{c_1 \sqrt{\pi}\sqrt{|m_{n+1}|}}+r_k^{2\tau^4}\leq C_{c_0,c_1,\tau}r_k^{-\frac{\tau^3}{2}}.
\end{align}
By (\ref{defMn}), (\ref{rk}), (\ref{Amnk012}), (\ref{sn}) and (\ref{pnsn}),
\begin{align}\label{Mn}
|M_n|<|q_{n, s_n}m_n\alpha_1-p_{n, s_n}m_n\alpha_2|+r_k^{2\tau^4}\leq  C_{c_0,c_1,\tau} r_k^{-\tau^3}.
\end{align}
\subsubsection*{Case A.2.1} 
If $p_{n,s_n}=0$ for some $n\in\{n_k, n_k+1, n_k+2\}$, then by (\ref{qnqn+1}), (\ref{bestsimqnqn+1}) and (\ref{coprime}), (\ref{Dioctau}),(\ref{rk}), (\ref{Amnk012}), we have
\begin{align*}
r_k^{2\tau^4}>\frac{1}{q_{n,s_n+1}}\geq |q_{n,s_n}\frac{m_n\alpha_1-l_{1,n}}{m_n\alpha_2-l_{2,n}}|\geq \frac{c_1 \sqrt{\pi}\sqrt{|m_{n+1}|}}{2m_n^\tau}\geq C_{c_0,c_1,\tau}r_k^{\frac{\tau^3}{2}+1},
\end{align*}
which is a contradiction.
\subsubsection*{Case A.2.2} 
 If $M_n=0$ for some $n\in\{n_k, n_k+1, n_k+2\}$, then by (\ref{defMn}), (\ref{rk}), (\ref{pnsn}), and the fact that $(\alpha_1, \alpha_2)\in PDC(c_1, \tau)$, we have
\begin{align*}
r_k^{2\tau^4}>|m_n| |q_{n,s_n}\alpha_1-p_{n,s_n}\alpha_2|\geq \frac{c_1 |m_n|}{\max{(p_{n,s_n}, q_{n,s_n})}^\tau}\geq C_{c_0,c_1,\tau}r_k^{\frac{\tau^4}{2}},
\end{align*} again a contradiction.
\subsubsection*{Case A.2.3}
If $p_{n,s_n}\neq 0$ and $M_n\neq 0$ for any $n\in \{n_k, n_k+1, n_k+2\}$, then for any $i,j\in \{n_k, n_k+1, n_k+2\}$, we have:
\begin{align}\label{MiMj}
&|(q_{i, s_i}m_iM_{j}-q_{j,s_{j}}m_{j}M_i)\alpha_1-(p_{i,s_i}m_iM_{j}-p_{j,s_{j}}m_{j}M_i)\alpha_2|\\
\leq &|(q_{i, s_i}m_i\alpha_1-p_{i, s_i}m_i\alpha_2+M_i)M_j|+|(q_{j, s_j}m_j\alpha_1-p_{j, s_j}m_j\alpha_2+M_j)M_i| \notag\\
<&(|M_i|+|M_{j}|) r_k^{2\tau^4}.\notag
\end{align}
\subsubsection*{Case A.2.3.1} 
 $(q_{i,s_i} m_i M_j- q_{j,s_j} m_j M_i,\ p_{i, s_i} m_i M_j- p_{j, s_j} m_j M_i)\neq (0,0)$ for some $i,j\in\{n_k, n_k+1,n_k+2\}$.

In this case let $h=\gcd (q_{i,s_i} m_i M_j- q_{j,s_j} m_j M_i,\ p_{i, s_i} m_i M_j- p_{j, s_j} m_j M_i)$ be the greatest common divisor of the two numbers if they are both nonzero, and $h=1$ otherwise. Then by (\ref{MiMj}),
\begin{align*}
|\frac{q_{i,s_i} m_i M_j- q_{j,s_j} m_j M_i}{h}\alpha_1-\frac{p_{i, s_i} m_i M_j- p_{j, s_j} m_j M_i}{h}\alpha_2|<\frac{|M_i|+|M_j|}{h}r_k^{2\tau^4}.
\end{align*}
However on one hand by (\ref{Mn}),
\begin{align*}
\frac{|M_i|+|M_j|}{h}r_k^{2\tau^4}\leq (|M_i|+|M_j|)r_k^{2\tau^4}\leq C_{c_0,c_1,\tau}r_k^{2\tau^4-\tau^3}.
\end{align*}
On the other hand, by the fact that $(\alpha_1, \alpha_2)\in PDC(c_1, \tau)$ and (\ref{rk}), (\ref{Amnk012}), (\ref{pnsn}), (\ref{Mn}),
\begin{align*}
        &|\frac{q_{i,s_i} m_i M_j- q_{j,s_j} m_j M_i}{h}\alpha_1-\frac{p_{i, s_i} m_i M_j- p_{j, s_j} m_j M_i}{h}\alpha_2|\\
\geq & \frac{c_1 h^{\tau}}{|(q_{i,s_i} m_i M_j- q_{j,s_j} m_j M_i, p_{i, s_i} m_i M_j- p_{j, s_j} m_j M_i)|^\tau}\\
\geq & C_{c_0,c_1,\tau}r_k^{\frac{7}{4}\tau^4},
\end{align*}  a contradiction.
\subsubsection*{Case A.2.3.2} For any $i,j\in \{n_k, n_k+1, n_k+2\}$
\begin{align*}
&q_{i,s_i}m_i M_j=q_{j,s_j}m_j M_i\\
&p_{i,s_i}m_i M_j=p_{j,s_j}m_j M_i.
\end{align*}
Then for $n=n_k$,
\begin{align*}
\frac{p_{n,s_n}}{q_{n,s_n}}=\frac{p_{n+1,s_{n+1}}}{q_{n+1,s_{n+1}}}=\frac{p_{n+2,s_{n+2}}}{q_{n+2,s_{n+2}}}.
\end{align*}
Hence we can let $p=p_{n,s_n}=p_{n+1,s_{n+1}}=p_{n+2,s_{n+2}}$ and $q=q_{n,s_n}=q_{n+1,s_{n+1}}=q_{n+2,s_{n+2}}$.
Then we would have (after plugging in $M_n=q l_{1,n}-p l_{2,n}$)
\begin{equation}\label{10.1}
q(m_n l_{1,n+1}-m_{n+1} l_{1,n})=p(m_n l_{2,n+1}-m_{n+1} l_{2,n})
\end{equation}
\begin{equation}\label{10.2}
q(m_n l_{1,n+2}-m_{n+2} l_{1,n})=p(m_n l_{2,n+2}-m_{n+2} l_{2,n})
\end{equation}
\begin{equation}\label{10.3}
q(m_{n+1} l_{1,n+2}-m_{n+2} l_{1,n+1})=p(m_{n+1} l_{2,n+2}-m_{n+2} l_{2,n+1})
\end{equation}
Then considering
$(\ref{10.1})\cdot (-l_{1,n+2})+(\ref{10.2})\cdot l_{1, n+1}+ (\ref{10.3})\cdot (-l_{1,n})$, we get 
\begin{align*}
p \cdot
\left|
\begin{matrix}
&m_{n_k}\ \ \ \ \ l_{1,n_k}\ \ \ \ \ l_{2,n_k}\ \ \\
&m_{n_k+1}\ \ l_{1, n_k+1}\ \ l_{2, n_k+1}\ \ \\
&m_{n_k+2}\ \ l_{1, n_k+2}\ \ l_{2, n_k+2}\ \
\end{matrix}
\right|
=q \cdot 0=0,
\end{align*} a contradiction with the choice of $n_k$.

\subsubsection*{Case B}
 $(\alpha_1, \alpha_2)\notin PDC(\tau)$. By the definition of $PDC(\tau)$, the sequence $\vec{h}_n=(h_{1,n}, h_{2,n})$ for which (\ref{coprime}) fails has to satisfy either $\gcd{(h_{1,n}, h_{2,n})}=1$ (Case B.1) or $h_{1,n}h_{2,n}=0$ (Case B.2). 
\subsubsection*{Case B.1}
We can find a sequence $\{n_j\}$, such that $|\vec{h}_{n_j}|=\max{(|h_{1,n_j}|, |h_{2,n_j}|)} \rightarrow \infty$ as $j\rightarrow \infty$, $\gcd{(h_{1,n_j}, h_{2,n_j})}=1$ and $\|h_{1,n_j}\alpha_1+h_{2,n_j}\alpha_2\|_{\T}<\frac{1}{|\vec{h}_{n_j}|^\tau}$. 

Without loss of generality, we can assume $|h_{1,n_j}|=|\vec{h}_{n_j}|$. In this case we can take $r_{n_j}=\frac{1}{|h_{1,n_j}|}$. For simplicity we will denote $n_j$ by $n$. 

Now that $\|h_{1,n}\alpha_1+h_{2,n}\alpha_2\|_{\T}<\frac{1}{|h_{1,n}|^\tau}$, we can find $l_{1,n}, l_{2,n}\in\Z$ such that $|h_{1,n}(\alpha_1-l_{1,n})+h_{2,n}(\alpha_2-l_{2,n})|<\frac{1}{|h_{1,n}|^\tau}$. Since replacing $(\alpha_1, \alpha_2)$ with $(\alpha_1+l_{1,n}, \alpha_2+l_{2,n})$ would not change anything, we will assume $|h_{1,n}\alpha_1+h_{2,n}\alpha_2|<\frac{1}{|h_{1,n}|^\tau}$. Then 
\begin{align}\label{alpha2/1}
|\frac{\alpha_2}{\alpha_1}-(-\frac{h_{1,n}}{h_{2,n}})|<\frac{1}{|h_{1,n}|^\tau \alpha_1}.
\end{align}
We consider the following two lines on $\T^2$: 
\begin{align*}
l_1(t)=(\{t\},\{\frac{\alpha_2}{\alpha_1}t\})\ \ \mathrm{and}\ \ l_2(t)=(\{t\},\{-\frac{h_{1,n}}{h_{2,n}}t\}).
\end{align*}
These two lines are close to each other in the sense that for $|t|\leq |h_{1,n}|^{3\tau/4}$, by (\ref{alpha2/1}),
\begin{align*}
\|\{\frac{\alpha_2}{\alpha_1}t\}-\{-\frac{h_{1,n}}{h_{2,n}}t\}\|_{\T} \leq |\frac{\alpha_2}{\alpha_1}t+\frac{h_{1,n}}{h_{2,n}}t|\leq \frac{|t|}{|h_{1,n}|^\tau \alpha_1}\leq \frac{1}{|h_{1,n}|^{\tau/4}\alpha_1}.
\end{align*}
The graph of $l_2(t)$ is the hypotenuse of a right triangle with two legs of lengths $|h_{1,n}|$ and $|h_{2,n}|$ (mod $\Z^2$). 
We consider the orbit of $(\alpha_1, -\frac{h_{1,n}}{h_{2,n}}\alpha_1)$ under the rotation $(\alpha_1, -\frac{h_{1,n}}{h_{2,n}}\alpha_1)$. These points lie on $l_2(t)$. 
Under this rotation the point moves a distance $\frac{\sqrt{h_{1,n}^2+h_{2,n}^2}}{|h_{2,n}|}\alpha_1$ at each step by a big interval with length $\sqrt{h_{1,n}^2+h_{2,n}^2}$. 
Let $\{\frac{p_m}{q_m}\}_{m=1}^\infty$ be the continued fraction approximants of $\frac{\alpha_1}{h_{2,n}}$. 
Choose $m$ such that 
\begin{align}\label{primeqm}
q_{m-1}\leq |h_{1,n}|\sqrt{h_{1,n}^2+h_{2,n}^2}<q_m.
\end{align} 
Then it would take a point on $\T$ at most $q_m+q_{m-1}$ steps (under the $\frac{\alpha_1}{h_{2,n}}$ -rotation) to enter each interval of length $\frac{1}{|h_{1,n}|\sqrt{h_{1,n}^2+h_{2,n}^2}}$ on $\T$  (e.g. \cite{JL}), 
which means it would take a point on $l_2(t)$ at most $q_m+q_{m-1}-1$ steps (under the $\frac{\sqrt{h_{1,n}^2+h_{2,n}^2}\alpha_1}{|h_{2,n}|}$-rotation) to enter each interval of length $\frac{1}{|h_{1,n}|}=r_n$ on the graph of $l_2(t)$.
Moreover, it is easy to see that the distance from any $x\in \T^2$ to $l_2(t)$ is bounded by $\frac{1}{\sqrt{h_{1, n}^2+h_{2, n}^2}}<r_n$. Thus
\begin{align}\label{T2ballh1h2}
\T^2\subseteq \cup_{k=0}^{q_m+q_{m-1}}B_{2r_n}(k\alpha_1,-\frac{h_{1,n}}{h_{2,n}}k\alpha_1).
\end{align}
By (\ref{qnqn+1}) and (\ref{alpha2/1}),
\begin{align*}
|p_{m-1}+q_{m-1}\frac{\alpha_2}{h_{1,n}}|=|p_{m-1}-q_{m-1}\frac{\alpha_1}{h_{2,n}}+q_{m-1}(\frac{\alpha_1}{h_{2,n}}+\frac{\alpha_2}{h_{1,n}})|\leq \frac{1}{q_m}+\frac{q_{m-1}}{|h_{1,n}|^{\tau-1}}.
\end{align*}
This implies, by (\ref{qnqn+1}) and (\ref{primeqm}),
\begin{align*}
&\|q_{m-1}\alpha_1\|_{\T}\leq |q_{m-1}\alpha_1-h_{2,n}p_{m-1}|\leq  \frac{|h_{2,n}|}{q_m},\\
&\|q_{m-1}\alpha_2\|_{\T}\leq \frac{|h_{1,n}|}{q_m}+\frac{2}{|h_{1,n}|^{\tau-4}}.
\end{align*}
Then by the fact that $\alpha\in WDC(c_0, \frac{\tau}{4})$ and (\ref{primeqm}),
\begin{align*}
\max\{\frac{|h_{2,n}|}{q_m}, \frac{|h_{1,n}|}{q_m}+\frac{2}{|h_{1,n}|^{\tau-4}}\}\geq \max{(\|q_{m-1}\alpha_1\|_{\T}, \|q_{m-1}\alpha_2\|_{\T})}\geq \frac{c_0}{q_{m-1}^{\tau/4}}\geq \frac{c_0}{2^{\frac{\tau}{4}}|h_{1,n}|^{\tau/2}}.
\end{align*}
This implies 
\begin{align}\label{T2qmqm-1}
q_m+q_{m-1}< 2q_m\leq \frac{2^{\frac{\tau}{4}+2}}{c_0}|h_{1,n}|^{\tau/2+1}.
\end{align} 
Since $0\leq k\leq \frac{2^{\frac{\tau}{4}+2}}{c_0} |h_{1,n}|^{\tau/2+1}<r_n^{-\frac{3\tau}{4}}$, by (\ref{alpha2/1}) the points $(k\alpha_1, k\alpha_2)$ and $(k\alpha_1, -\frac{h_{1,n}}{h_{2,n}}k\alpha_1)$ differ at most by $r_n^{\frac{\tau}{4}}$, so we obtain using (\ref{T2ballh1h2}) and (\ref{T2qmqm-1}),
\begin{align*}
\T^2 \subseteq \cup_{k=0}^{r_n^{-3\tau/4}}B_{3r_n}(k\alpha_1, k\alpha_2).
\end{align*}
\subsubsection*{Case B.2}
We can find a sequence $\{n_j\}$ such that $h_{2, n_j}\equiv 0$ and $|h_{1,n_j}|\rightarrow \infty$ such that 
\begin{align}\label{B2as}
\|h_{1,n_j}\alpha_1\|_{\T}<\frac{1}{|h_{1, n_j}|^\tau}.
\end{align} 
For simplicity we will replace $n_j$ with $n$. 
We can find $M_n$ such that $|h_{1,n}\alpha_1-M_n|<\frac{1}{|h_{1,n}|^\tau}$. 
Let $d_n=\gcd (h_{1,n}, M_n)$ be the greatest common divisor. Let $\tilde{h}_{1,n}=\frac{h_{1,n}}{d_n}$ and $\tilde{M}_n=\frac{M_n}{d_n}$. 
We have 
\begin{align}\label{y}
|\alpha_1-\frac{\tilde{M}_n}{\tilde{h}_{1,n}}|<\frac{1}{|h_{1,n}|^{\tau+1}}\rightarrow 0.
\end{align}
If $\tilde{h}_{1,n}$ is bounded in $n$, then $\alpha_1$ can be approximated arbitrarily closely by rationals with bounded denominators, which is impossible. 
Thus $|\tilde{h}_{1,n}|\rightarrow \infty$. 
Now take radius $r_n=\frac{1}{|\tilde{h}_{1,n}|}$. 
For each $0\leq i\leq \tilde{h}_{1,n}-1$ consider $\{(i\alpha_1+k\tilde{h}_{1,n}\alpha_1,\ i\alpha_2+k\tilde{h}_{1,n}\alpha_2)\}_{k=0}^\infty$. Let $\{\frac{p_m}{q_m}\}_{m=1}^\infty$ be the continued fraction approximants of $\tilde{h}_{1,n}\alpha_2$. Choose $m$ such that 
\begin{align}\label{w}
q_{m-1}\leq |\tilde{h}_{1,n}|=r_n^{-1}<q_m.
\end{align} 
Then it takes any point on $\T$ at most $q_m+q_{m-1}-1$ steps (under the $\tilde{h}_{1,n}\alpha_2-$rotation) to enter each interval of length $r_n$ \cite{JL}.
By (\ref{qnqn+1}), 
\begin{align}\label{x}
|p_{m-1}-q_{m-1}\tilde{h}_{1,n}\alpha_2|\leq \frac{1}{q_m}.
\end{align}
By (\ref{B2as}), (\ref{w}) and since $\tau>4$, we have $\|q_{m-1}\tilde{h}_{1,n}\alpha_1\|\leq \frac{q_{m-1}}{|\tilde{h}_{1,n}|^{\tau}}<\frac{c_0}{(q_{m-1}|\tilde{h}_{1,n}|)^{\tau/4}}$. 
By the fact that $\alpha\in WDC(c_0, \frac{\tau}{4})$, $\|q_{m-1}\tilde{h}_{1,n}\alpha_2\|\geq \frac{c_0}{(q_{m-1}|\tilde{h}_{1,n}|)^{\tau/4}}$. 
By (\ref{x}) and (\ref{w}), we have 
\begin{align}\label{z}
q_m\leq \frac{1}{c_0}|\tilde{h}_{1,n}|^{\frac{\tau}{2}}.
\end{align}
Now for $0\leq k\leq q_m+q_{m-1}-1$, by (\ref{y}), (\ref{B2as}) and (\ref{z}), $\|i\alpha_1+k\tilde{h}_{1,n}\alpha_1-\frac{i\tilde{M_n}}{\tilde{h}_{1,n}}\|_{\T} \leq \frac{C}{|\tilde{h}_{1,n}|^\frac{\tau}{2}}=Cr_n^{\frac{\tau}{2}}$. 
Since $\gcd{(\tilde{h}_{1,n}, \tilde{M}_n})=1$, any interval of length $r_n=\frac{1}{|\tilde{h}_{1,n}|}$ contains $\frac{i\tilde{M}_n}{\tilde{h}_{1,n}}$ for some $0\leq i\leq \tilde{h}_{1,n}-1$.
Thus 
\begin{align*}
\T^2\subseteq \cup_{k=0}^{(q_m+q_{m-1})|\tilde{h}_{1,n}|}B_{r_n}(k\alpha_1, k\alpha_2).
\end{align*}
By (\ref{z}), $(q_m+q_{m-1})|\tilde{h}_{1,n}|\leq r_n^{-\tau}$, so we have
\begin{align}
\T^2\subseteq \cup_{k=0}^{r_n^{-\tau}}B_{r_n}(k\alpha_1, k\alpha_2).
\end{align}
$\hfill{} \Box$.

\section*{\\Appendix A}
\subsection*{Proof of Lemma \ref{toralsbdd}}


We include the proof here for completeness.

For sufficiently small $\epsilon>0$, fix an integer $H_0\sim N^{1/(d(\tau-1)+1+d\epsilon)}$, define $g(n)=\frac{1}{n(n+1)}$ for $1\leq n<H_0$ and $g(H_0)=\frac{1}{H_0}$. For $(n_1,...,n_d)\in \Z^d$ with $1\leq n_i\leq H_0$, define $f(n_1,...,n_d)=\prod_{i=1}^d g(n_i)$.
By Lemma \ref{ETK}, we have
\begin{align*}
D_N(\theta)
&\leq C_d(\frac{1}{H_0}+\sum_{0<|h|\leq H_0}\frac{1}{r(\vec{h})}|\frac{1}{N}\sum_{n=1}^N e^{2\pi i \langle \vec{h}, {\alpha}\rangle n}|)\\
&\leq \tilde{C}_d(\frac{1}{H_0}+\frac{1}{N}\sum_{0<|h|\leq H_0} \frac{1}{r(\vec{h})} \frac{1}{\|\langle \vec{h}, {\alpha}\rangle \|_{\T}})\\
&=\tilde{C}_d(\frac{1}{H_0}+\frac{1}{N}\sum_{n_1,...,n_d=1}^{H_0} f(n_1,...,n_d)\sum_{\vec{h}=(h_1,...,h_d)\neq \vec{0}, |h_j|\leq n_j} \frac{1}{\|\langle \vec{h}, {\alpha} \rangle \|_{\T}})\\
&\leq \tilde{C}_d(\frac{1}{H_0}+\frac{1}{N}\sum_{n_1,...,n_d=1}^{H_0} f(n_1,...,n_d) \sum_{j=1}^{3^d r(\vec{n})}\frac{r(\vec{n})^\tau}{j})\\
&\leq \tilde{C}_d(\frac{1}{H_0}+\frac{1}{N}\sum_{n_1,...,n_d=1}^{H_0} f(n_1,...,n_d) r(\vec{n})^\tau \log{r(\vec{n})})\\
&\leq \tilde{C}_d(\frac{1}{H_0}+\frac{{H_0}^{d(\tau-1+\epsilon)}}{N})\\
&\lesssim N^{-1/(d(\tau-1)+1+d\epsilon)}.
\end{align*}
$\hfill{} \Box$

\section*{Acknowledgement}
R. H. would like to thank Anton Gorodetski for valuable discussions. 
This research was partially supported by the NSF DMS-1401204.  S. J. would like to thank the support of the Simons Foundation where she was a Fellow in 2014-15.
We are also grateful to the Isaac
    Newton Institute for Mathematical Sciences, Cambridge, for its
    hospitality, supported by EPSRC Grant Number EP/K032208/1, during the programme Periodic and Ergodic Spectral
    Problems where this work was started.

\bibliographystyle{amsplain}

\end{document}